      \numberwithin{equation}{section}
      \newcommand{\beq}{\begin{equation}}
      \newcommand{\eeq}{\end{equation}}
      \newcommand{\beqs}{\begin{eqnarray*}}
      \newcommand{\eeqs}{\end{eqnarray*}}
      \newcommand{\beqn}{\begin{eqnarray}}
      \newcommand{\eeqn}{\end{eqnarray}}
      \newcommand{\beqa}{\begin{array}}
      \newcommand{\eeqa}{\end{array}}
      \def\lra{\longrightarrow}
      \def\bc{\begin{center}}
      \def\ec{\end{center}}
      \def\begeq{\begin{equation}}
      \def\endeq{\end{equation}}
      \def\and{\quad{\rm and}\quad}
      \let\lra=\longrightarrow
      \def\mapright\#1{\, \smash{\mathop{\lra}\limits^{\#1}}\, }
      \newtheorem{prop}{Proposition}[section]
      \newtheorem{theo}[prop]{Theorem}
      \newtheorem{lem}[prop]{Lemma}
      \newtheorem{cor}[prop]{Corollary}
      \newtheorem{rem}[prop]{Remark}
\begin{document}

       \title{ No compact split limit Ricci flow of  type II from the blow-down}

     \author{Ziyi  $\text{Zhao}^{\dag}$ and Xiaohua $\text{Zhu}^{\ddag}$}

 \address{BICMR and SMS, Peking
 University, Beijing 100871, China.}
 \email{ 1901110027@pku.edu.cn\\\ xhzhu@math.pku.edu.cn}

 \thanks {$\ddag$ partially supported  by National Key R\&D Program of China  2023YFA1009900 and 2020YFA0712800,  and  NSFC 12271009.}
 \subjclass[2000]{Primary: 53E20; Secondary: 53C20,  53C25, 58J05}

 \keywords{Steady Ricci soliton, Ricci flow,  ancient  $\kappa$-solution,  Perelman's  $\mathcal L$-geodesic}

      \begin{abstract}  By Perelman's $\mathcal L$-geodesic theory,  we  study the   blow-down solutions  on a  noncompact $\kappa$-noncollapsed   steady  gradient   Ricci soliton  $(M^n, g)$  $(n\ge 4)$ with  nonnegative curvature operator and positive Ricci curvature  away from a compact set of $M$.   We prove that any compact split ancient     solution   of codimension one   from the  blow-down of $(M, g)$ is of  type I.  The result is a generalization of  our previous  work   from  $n=4$ to any dimension.
        \end{abstract}

       \date{}

    \maketitle


    \setcounter{section}{-1}

    \section{Introduction}
    
Let $(M^n, g, f )$  $(n\ge 4)$ be a complete  noncompact $\kappa$-noncollapsed  steady  gradient Ricci soliton with curvature operator
        $\rm{Rm}\geq 0$ away from a compact  set $K$ of $M$.   Let $g(\cdot, t)=\phi^*_{t}(g)$ $(t\in (-\infty, \infty))$  be an induced ancient Ricci flow of  $(M,g)$,
        where $\phi_{t}$ is a family of transformations generated by the gradient vector field $-\nabla f$.
          For any  sequence of $p_i\in M$ $(\to \infty)$,  we consider the (normally) rescaled Ricci  flows $(M, g_{p_i}(t); p_i)$,  where
         \begin{align}\label{rescaling-flow}g_{p_i}(t)=r_i^{-1} g( \cdot, r_it),
    \end{align}
    $r_iR(p_i)=1$.
    By a version of  Perelman's compactness theorem for  ancient  $\kappa$-solutions \cite[Proposition 1.3]{ZZ-4d} (also see Proposition \ref{Dimension-reduction}),
    we know that $(M, g_{p_i}(t); p_i)$   converge  subsequently to a  splitting flow $(N \times \mathbb{R}, \bar {g}(t); p_\infty)$ in the Cheeger-Gromov sense, where
    \begin{align}\label{splitt-solution} \bar g(t)= h(t) +ds^2, ~ {\rm on}~ N\times \mathbb R,
     \end{align}
    and $h(t)$  ($t\in (-\infty, 0]$)  is an ancient   $\kappa$-solution on an  $(n-1)$-dimensional $N$.
    For simplicity, we call $(N,  h(t))$ a  split limit  flow of  codimension one  arising from  the  blow-down of $(M, g)$.

  Recall that
         an  ancient $\kappa$-solution $(N, h(t))$ $(t\in (-\infty, 0])$ is  of type I  if  it satisfies
                \begin{align*}
                    \sup_{N\times (-\infty,0]} (-t)|R(x,t)|<\infty.
                \end{align*}
              Otherwise,   it is called type \uppercase\expandafter{\romannumeral2},  if  it satisfies
                \begin{align*}
                    \sup_{M\times (-\infty,0]} (-t)|R(x,t)|=\infty.
                \end{align*}

In this paper,  we prove

    \begin{theo}\label{main-notype2}
        Let $(M^n, g)$ $(n\ge 4)$  be a  noncompact $\kappa$-noncollapsed steady gradient Ricci soliton which
        satisfies
         \begin{align}\label{condtion}{\rm Rm} \geq 0~ {\rm and }~{\rm Ric}>0~{\rm  on}~ M\setminus K,
         \end{align}
         where  ${\rm Rm}$ and ${\rm Ric}$ denote the curvature operator and Ricci curvature of $(M, g)$, respectively.
          Then any  compact split ancient  solution   $(N, h(t))$ in  (\ref{splitt-solution}) from the  blow-down of $(M, g)$ is of  type I. In the other words, there is no    $(n-1)$-dimensional   compact split ancient  solution of
        type II  from the  blow-down  of $(M, g)$.
    \end{theo}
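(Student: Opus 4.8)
The plan is to argue by contradiction: suppose $N$ is compact while the split limit $(N,h(t))$ in \eqref{splitt-solution} is of type \uppercase\expandafter{\romannumeral2}, and extract from this a secondary blow-up that gets ruled out by Perelman's $\mathcal L$-geometry on $M$. Since $h(t)$ is an ancient $\kappa$-solution with $\mathrm{Rm}\ge0$, Hamilton's trace Harnack gives $\partial_t R\ge 0$ along $h$, and since $N$ is compact one has $R_{\max}(t):=\max_N R(\cdot,t)\to 0$ as $t\to-\infty$ (otherwise a backward Hamilton limit would be a nonflat compact steady soliton, hence flat). Using that $h(t)$ is type II, choose $t_i\to-\infty$ and $x_i\in N$ with $R(x_i,t_i)=R_{\max}(t_i)=:Q_i\to0$ and $Q_i(-t_i)\to\infty$, and form the parabolic rescalings $h_i(s):=Q_i\,h(t_i+Q_i^{-1}s)$. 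By Hamilton-type point picking together with the $\kappa$-solution compactness of Proposition \ref{Dimension-reduction}, a subsequence of $(N,h_i(s);x_i)$ converges in the Cheeger--Gromov sense to a complete nonflat eternal $\kappa$-solution $(\Sigma^{n-1},\bar h(s);x_\infty)$ with $\mathrm{Rm}\ge0$ that attains $\sup R$ at an interior space-time point; by Hamilton's rigidity theorem for such eternal solutions, $(\Sigma,\bar h(0))$ is a nonflat steady gradient Ricci soliton with $\mathrm{Rm}\ge0$, and since compact steady solitons are flat, $\Sigma$ is noncompact and inherits the $\kappa$-noncollapsing of $h$.

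Next I would transport this picture back to $M$. Because $h$ is itself a Cheeger--Gromov limit of the rescaled flows $(M,g_{p_i}(t);p_i)$ from \eqref{rescaling-flow}, and the line factor $ds^2$ in \eqref{splitt-solution} survives the rescalings of the previous step, a diagonal argument yields scales $\lambda_i\to0$, times $T_i\to-\infty$ and points $q_i\to\infty$ such that $(M,\lambda_i\,g(\lambda_i^{-1}\cdot+T_i);q_i)$ converges to the splitting eternal flow $(\Sigma\times\mathbb R,\ \bar h(s)+du^2)$. Thus the type II hypothesis produces, as a further blow-down of $(M,g)$, a line-splitting ancient $\kappa$-solution whose cross-section $\Sigma$ is a nonflat \emph{noncompact} steady soliton.

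To reach a contradiction I would feed this back into Perelman's $\mathcal L$-geometry on the ancient flow $(M,g(t))$, exactly as in the proof of Proposition \ref{Dimension-reduction}. Fixing a basepoint $p_0$, Perelman's reduced volume $\widetilde V_{p_0}(\tau)$ is non-increasing, with $\lim_{\tau\to0^+}\widetilde V_{p_0}(\tau)=1$ and $\widetilde V_{p_0}(\tau)\searrow\widetilde V_\infty\in(0,1)$ (positivity from $\kappa$-noncollapsing, strictness from nonflatness, using \eqref{condtion}). On the other hand, the $\mathbb R$-splitting of any blow-down of $M$ comes precisely from the convergence of the renormalized soliton potentials $f$ to an affine function on the limit whose gradient trivializes the $\mathbb R$-direction; combining this with the steady identities $R+|\nabla f|^2=\mathrm{const}$ and $\nabla R=2\,\mathrm{Ric}(\nabla f,\cdot)$ one evaluates $\widetilde V$ along the sequence of the previous step and forces the limit $\Sigma\times\mathbb R$ to carry a gradient \emph{shrinking} structure. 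But a product $\Sigma\times\mathbb R$ that is simultaneously a nonflat steady soliton and a shrinking soliton would admit on the $\Sigma$-factor a function whose Hessian is a positive multiple of the metric, whence $\Sigma$ is flat --- a contradiction. Hence $(N,h(t))$ must be of type I, which is Theorem \ref{main-notype2}.

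The main obstacle is this last step: transporting the steady structure of $M$ through Perelman's $\mathcal L$-geodesics tightly enough to pin down and constrain the secondary blow-down $\Sigma\times\mathbb R$ --- in effect, converting the reduced-volume monotonicity of $M$ into a scalar-curvature bound on the blow-down that already contradicts the type II behaviour at the level of $N\times\mathbb R$. The compactness of $N$ enters decisively here: it is what forces the $\mathcal L$-geodesics of $M$ detecting the $N$-direction to remain in a controlled family of (near) level sets of $f$, with uniform estimates; without compactness the cross-section is allowed to open up and the argument breaks, consistent with the existence of type II examples such as $\mathrm{Bryant}^{\,n-1}\times\mathbb R$.
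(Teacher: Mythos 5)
Your overall strategy (contradiction via extracting more structure from the type II assumption and clashing it against the $\mathcal L$-geometry of $M$) is in the spirit of the paper, but the route is genuinely different, and the crucial last step has a gap that you yourself flag but do not close.

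The gap is in step 3. Perelman's $\mathcal L$-geodesic/reduced-volume machinery yields a nonflat gradient \emph{shrinking} soliton only when one rescales around \emph{$\ell$-centers}, i.e.\ points $(q,\tau)$ with $\ell(q,\tau)\le C$ uniformly as $\tau\to\infty$ (this is exactly the content of the paper's Proposition \ref{levelset-ell}, Lemma \ref{upper-boundofR}, Proposition \ref{local-ell-estimate} and Theorem \ref{asymtotic-shrinker}, which are a delicate package precisely because $\mathrm{Rm}\ge 0$ only holds outside $K$). Your secondary blow-down $(M,\lambda_i g(\lambda_i^{-1}\cdot+T_i);q_i)\to\Sigma\times\mathbb{R}$ is built by Hamilton type II point picking inside the cross-section $N$: the $q_i$ are chosen to capture the high-curvature region of $h$, and nothing forces them to be $\ell$-centers for the ancient flow $(M,g(t))$ (indeed $\ell(q_i,\tau_i)$ will typically be unbounded there). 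Hence there is no mechanism to promote $\Sigma\times\mathbb R$ to a shrinker, and the intended steady-vs-shrinking contradiction never materialises. Steps 1--2 are also under-justified (Hamilton's type II point picking needs forward-in-time curvature control which is not automatic from $Q_i\to 0$; and the claim $R_{\max}(t)\to 0$ for compact type II $\kappa$-solutions needs an argument), but those are fixable with more care; step 3 is the structural obstruction.

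What the paper does instead is to avoid a secondary blow-up entirely and to compare \emph{diameters} on a single level set $\Sigma_{f(\phi_T(p_i))}$. On the one hand, the type II hypothesis together with \cite[Lemma 4.3]{ZZ-4d} makes $\mathrm{Diam}(h(t_0))R_h^{1/2}(p_\infty,t_0)$ arbitrarily large, which transfers (via $\epsilon$-closeness as in Definition \ref{epsilonclose}) to a large normalised diameter of the hypersurface $f^{-1}(f(\phi_T(p_i)))$ based at $\phi_T(p_i)$. On the other hand, Proposition \ref{levelset-ell} places an $\ell$-center $q_i$ on the \emph{same} level set, Theorem \ref{asymtotic-shrinker} gives a compact limit shrinker at $q_i$ whose diameter is bounded by the classification of compact $\mathrm{Rm}\ge 0$ shrinkers, and a uniform curvature comparison on level sets (from \cite[Lemma 1.3 and Lemma 2.6]{ZZ-4d}) transports that bound from $q_i$ to $\phi_T(p_i)$. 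The two estimates \eqref{max-diameter-T1} and \eqref{type1-2-diam} then contradict. So the real leverage is not "two soliton structures on one blow-down are incompatible," but "on each level set the $\ell$-center pins the normalised cross-section diameter, while type II forces it to blow up."
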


    Since  any  compact  ancient $\kappa$-solution   of type I  is a  gradient  shrinking  Ricci soliton, which has been classified in  \cite[Theorem 7.34]{CLN} and  \cite{BW, Ni} (also see \cite[Proposition 4.1]{ZZ-4d}),  Theorem \ref{main-notype2} actually  gives a classification of  all    compact  split  blow-down  solutions of codimension one  of $(M, g)$ ,  which satisfies (\ref{condtion}).

    As an application of Theorem \ref{main-notype2}, we prove the  following alternative principle.

   \begin{cor}\label{alter-coro}
       Let $(M^n,  g)$ be a  steady  gradient   Ricci soliton in Theorem \ref{main-notype2}.    Then either all   split  blow-down  solutions  $(N, h(t))$  of $(M, g)$ in  (\ref{splitt-solution}) are  $(n-1)$-dimensional  compact ancient $\kappa$-solution of type I, or  $(n-1)$-dimensional   noncompact ancient $\kappa$-solution.
   \end{cor}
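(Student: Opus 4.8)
The plan is to deduce Corollary \ref{alter-coro} from Theorem \ref{main-notype2}, the classification of compact type~I ancient $\kappa$-solutions, and the asymptotic structure of the steady soliton. By Proposition \ref{Dimension-reduction}, every split blow-down solution $(N, h(t))$ of $(M, g)$ is an $(n-1)$-dimensional ancient $\kappa$-solution with nonnegative curvature operator (since ${\rm Rm}\ge 0$ passes to the blow-down limit), and $N$ is either compact or noncompact. If $N$ is compact, Theorem \ref{main-notype2} forces $(N, h(t))$ to be of type~I, hence a compact gradient shrinking Ricci soliton; the classification of compact shrinkers with ${\rm Rm}\ge 0$ in \cite[Theorem 7.34]{CLN} and \cite{BW, Ni} (see also \cite[Proposition 4.1]{ZZ-4d}) then identifies $(N, h(t))$ with a finite quotient of the round shrinking sphere $S^{n-1}$. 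Thus each individual blow-down solution is either a quotient of the round shrinking $S^{n-1}$ or noncompact, and it remains only to exclude the mixed situation in which $(M, g)$ admits simultaneously a compact and a noncompact split blow-down solution.

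So suppose some sequence $p_i\to\infty$ produces a compact blow-down limit $(S^{n-1}/\Gamma, h_1(t))$. By the definition of Cheeger--Gromov convergence, for every $\rho>0$ the rescaled balls $B_{g_{p_i}(0)}(p_i,\rho)$ converge to $B_{\bar g(0)}(p_\infty,\rho)$ inside $S^{n-1}/\Gamma\times\mathbb{R}$; after rescaling the metric $g$ by $R(p_i)$, the fact that $f_i:=R(p_i)^{1/2}\bigl(f-f(p_i)\bigr)$ converges to a function of constant gradient on the splitting factor $\mathbb{R}$ shows that, for $i$ large, the level-set component of $f$ through $p_i$ is contained in $B_g(p_i, C R(p_i)^{-1/2})$ for a fixed $C$, and that on this region, at scale $R(p_i)^{-1/2}$, the metric is $\epsilon_i$-close to the round cylinder over $S^{n-1}/\Gamma$, with $\epsilon_i\to 0$.

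The key step is then to propagate this almost-round cylindrical structure from the scale $R(p_i)^{-1/2}$ out to infinity, along the gradient flow of $f$. Here one uses the steady soliton identities $R+|\nabla f|^2\equiv{\rm const}$ and $\langle\nabla R,\nabla f\rangle=-2\,{\rm Ric}(\nabla f,\nabla f)\le 0$ on $M\setminus K$, which make $R$ monotone along the flow of $\nabla f$, give uniform control of that flow, and keep the level sets of $f$ nested, together with ${\rm Rm}\ge 0$, ${\rm Ric}>0$ on $M\setminus K$, and the $\kappa$-noncollapsing, which rule out collapse or curvature concentration in the transition region. A continuity/openness argument along the level sets of $f$ then shows that all sufficiently large level sets of $f$ are almost-round cylinders over $S^{n-1}/\Gamma$, so that $(M, g)$ is asymptotic near infinity to such a cylinder. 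Consequently every sequence $q_j\to\infty$ yields the same blow-down limit $(S^{n-1}/\Gamma, h_1(t))$, and the mixed situation cannot occur. Hence either every split blow-down solution of $(M, g)$ is compact --- and then of type~I by the above --- or none is, i.e. all are noncompact, which is precisely the asserted alternative.

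I expect the propagation in the third paragraph to be the main obstacle: one must genuinely control the geometry of $(M, g)$ in the region interpolating between the neck at scale $R(p_i)^{-1/2}$ and infinity, rather than only at the neck itself, and (if $M\setminus K$ is not already known to have a single end) also match up level-set components along the way. This is carried out in the spirit of Hamilton's and Perelman's neck-stability arguments, the steady soliton equation supplying both the monotonicity of $R$ along $\nabla f$ and, via $R+|\nabla f|^2\equiv{\rm const}$, the uniform control of the flow of $\nabla f$ needed to carry the cylindrical model outward.
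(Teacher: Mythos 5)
Your reduction of the corollary to the ``no mixed situation'' statement is exactly what the paper does: Theorem \ref{main-notype2} disposes of compact type~II limits, so what remains is to show that a compact type~I limit and a noncompact limit cannot both arise from the blow-down. Where you and the paper diverge is in how this remaining exclusion is handled. The paper does not argue it at all --- it simply invokes \cite[Theorem 1.2]{ZZ-high}, a separate result of the authors, as a black box. You instead sketch a direct propagation argument: extract the almost-round-cylindrical structure of the level set of $f$ near $p_i$ at scale $R(p_i)^{-1/2}$, then carry it outward along $\nabla f$ using the steady soliton identities $R+|\nabla f|^2\equiv\text{const}$ and $\langle\nabla R,\nabla f\rangle=-2\,\mathrm{Ric}(\nabla f,\nabla f)$ together with the curvature and noncollapsing hypotheses, concluding that every large level set is nearly cylindrical and hence that the blow-down limit is unique.

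The gap is precisely the one you flag yourself: the propagation step is not an elementary continuity argument, and as written it is not a proof. Knowing that the level set through $p_i$ is $\epsilon_i$-cylindrical at scale $R(p_i)^{-1/2}$ does not by itself control the level sets through points $q$ with $f(q)\gg f(p_i)$; the closeness parameter could a priori degrade as you move outward, and nothing in the soliton identities alone forces the scalar curvature to remain comparable to $1/f$ across the whole transition region (indeed, establishing exactly that kind of two-sided linear decay is the content of Theorem \ref{compact-linear-decay}, which in the paper is proved \emph{after}, and using, Corollary \ref{alter-coro}). You would also need to handle the possibility of the level set having several components, or of several ends, which the ``openness along level sets'' framing glosses over. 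In short, the heavy lifting you defer to ``neck-stability in the spirit of Hamilton and Perelman'' is not a routine step but is essentially the theorem the paper is citing; without carrying it out, or citing it, the argument is incomplete. Everything else in your proposal (the use of Proposition \ref{Dimension-reduction}, Theorem \ref{main-notype2}, and the classification of compact shrinkers with $\mathrm{Rm}\ge 0$) is correct and matches the paper's logic.
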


 Corollary \ref{alter-coro} confirms a conjecture    \cite[Conjecture 4.5]{ZZ-4d}  and is a     generalization of    \cite[Theorem  0.2]{ZZ-4d}   hereby  from $n=4$ to any dimension. We would like to mention that
          our previous proof  in the case $n=4$   heavily depends    on a  deep  classification result   for $3d$ compact $\kappa$-solutions of type II  proved by Brendle-Daskalopoulos-Sesum \cite{BDS}.

      In order to follow the argument in \cite{ZZ-4d}  to prove   Theorem \ref{main-notype2},  we  use Perelman's $\mathcal L$-geodesic theory   in \cite[Section 7, Section 9]{P}  to construct a  limit gradient   shrinking  Ricci soliton through a  sequence of  normally rescaled Ricci flows (cf. Section  4, 5).  Inspired by  a recent work of  Bamler-Chan-Ma-Zhang \cite{BCMZ},
       we are able  to choose a  suitable base point in each level set of   the potential function $f$  such that the corresponding  $\ell$-length is  uniformly bounded (cf. Section 2).   Unfortunately,   we  can't use the Perelman's   result directly to get the gradient estimate for the (rescaled)  reduced distance  function $\ell_i(x,\tau)$  since   the nonnegative  curvature condition just holds  outside   a compact set of $M$  (see Remark \ref {remark}).   We will  do the curvature decay estimates  to overcome the difficulty (see Section 3).

       To see some examples of  steady gradient Ricci solitons $(M, g)$ which satisfy  the condition  (\ref{condtion}) in  Theorem \ref{main-notype2},  we refer the reader to \cite{Bry, App, Yi-flyingwings}, etc.

    According to   the proof of Theorem \ref{main-notype2},  we can get the following explicit  curvature decay estimate.

   \begin{theo}\label{compact-linear-decay}
    Let $(M,  g)$    be a  steady  gradient   Ricci soliton in Theorem \ref{main-notype2}.   Suppose that there exists a sequence of rescaled Ricci flows $(M,g_{p_i}(t); p_i)$, which converges subsequently to a  splitting Ricci flow $(N\times \mathbb R, \bar g(t); p_\infty)$ as in (\ref{splitt-solution}) for some compact ancient $\kappa$-solution $(N, h(t))$.  Then the scalar curvature of $(M,  g)$ decays to zero linearly. Namely,  there exist  two positive constants $C_1$ and $C_2$ such that
       \begin{align}\label{R-decay-linearly}
           \frac{C_1}{\rho(x)}\leq R(x)\leq \frac{C_2}{\rho(x)}, ~\rho(x)\gg 1,
       \end{align}
    where  $R(x)$ is the scalar  curvature of  $(M,  g)$ and  $\rho(x)$  denotes   a  distance function from a fixed point  on  M.
   \end{theo}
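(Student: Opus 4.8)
The plan is to read off the curvature decay estimate from the soliton structure combined with the dimension‑reduction picture already established in the proof of Theorem \ref{main-notype2}. Recall that on a steady gradient Ricci soliton one has the identities $R + |\nabla f|^2 = \mathrm{const}$ (normalize so the constant is $1$) and $\Delta f = -R$, and $R > 0$ with $\mathrm{Rm} \ge 0$ on $M \setminus K$. Under the hypothesis (\ref{condtion}), the level sets of $f$ are compact hypersurfaces once $f$ is large, $|\nabla f| \to 1$, and $f(x)$ is comparable to the distance $\rho(x)$ from a fixed point (this is the standard Cao–Chen type asymptotic for steady solitons with $\mathrm{Ric} > 0$ outside a compact set). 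So it suffices to prove $C_1/f(x) \le R(x) \le C_2/f(x)$ for $f(x)$ large.

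First I would establish the upper bound $R(x) \le C_2/\rho(x)$. The key point is that the existence of a \emph{compact} split limit $(N,h(t))$ forces a definite linear‑in‑$\tau$ lower bound on the injectivity‑radius‑type scale, equivalently an upper bound on how slowly $R$ can decay. Concretely, along the integral curves of $-\nabla f$ one has $\frac{d}{dt} R(\phi_t(x)) = -2\,\mathrm{Ric}(\nabla f, \nabla f) \le 0$ eventually, so $R$ is monotone along the flow lines; combining this with $\Delta f = -R$ and integrating $R$ over sublevel sets $\{f \le a\}$ against the soliton identities gives $\int_{\{f = a\}} |\nabla f| \, d\sigma = \int_{\{f \le a\}} R \, dV$, which when $N$ is compact (so the level sets have \emph{bounded} volume after rescaling by $R$ on the level set) pins $R$ on the level set $\{f=a\}$ from above by $C_2/a$. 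This is exactly where the compactness of $N$, as opposed to the noncompact alternative in Corollary \ref{alter-coro}, enters: a noncompact split limit would allow the level‑set volume growth to absorb the integral and only yield a weaker (non‑linear) decay.

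Next I would prove the lower bound $R(x) \ge C_1/\rho(x)$. Here the natural tool is Perelman's reduced volume / reduced distance machinery that has already been set up in Sections 2–5 to build the limit shrinker: the construction shows that the rescaled reduced distance $\ell_i$ stays uniformly bounded on fixed parabolic balls, and Perelman's monotonicity together with the type‑I conclusion for $(N,h(t))$ (Theorem \ref{main-notype2}) gives a two‑sided bound $(-t)|R| \asymp 1$ on the limit. Transporting this back via the Cheeger–Gromov convergence and the scaling $g_{p_i}(t) = r_i^{-1} g(\cdot, r_i t)$ with $r_i R(p_i) = 1$ yields that $R(p_i)\rho(p_i)$ is bounded below away from zero; since the sequence $p_i \to \infty$ was arbitrary (every blow‑down subsequence produces a compact split limit under the standing hypothesis of the theorem), a contradiction/compactness argument upgrades this to the uniform pointwise lower bound $R(x)\rho(x) \ge C_1$ for all $x$ outside a compact set.

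The main obstacle I anticipate is the same one flagged in Remark \ref{remark}: the nonnegativity of the curvature operator, and hence the clean Perelman estimates for the reduced distance, only hold outside the compact set $K$. Making the level‑set integration argument for the upper bound rigorous requires controlling the flux of $\nabla f$ and the behavior of $R$ on the "bad" region near $K$, and ensuring that the monotonicity $\frac{d}{dt}R \le 0$ along $-\nabla f$ flow lines is eventually valid once the trajectory leaves $K$; one must check that flow lines starting far out never re‑enter $K$, which follows from $\langle \nabla f, \nabla \rho\rangle > 0$ at large distance but needs the curvature decay already being bootstrapped. So the delicate part is closing the loop: the upper and lower curvature bounds, the comparability $f \asymp \rho$, and the confinement of flow lines are mutually dependent, and they should be proved together by a single induction on the distance scale, using the quantitative dimension‑reduction statement of Proposition \ref{Dimension-reduction} to anchor the base case at large $\rho$.
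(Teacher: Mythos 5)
Your high‑level plan (reduce to proving $R \asymp 1/f$, use $f \asymp \rho$, exploit the compactness of the split limit) is the right framing, but both of your proposed arguments have gaps that the paper closes differently.

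For the upper bound, integrating the soliton identity (note the sign: with $R_{ij}=\nabla_i\nabla_j f$ one has $\Delta f = R$, not $-R$) gives $\int_{\{f\le a\}} R\,dV = \int_{\{f=a\}}|\nabla f|\,d\sigma$, which is only an \emph{averaged} statement; it cannot by itself produce a pointwise bound $R(x)\le C/a$ on the level set $\Sigma_a$ — a bad point could be hidden in a small region without affecting the integral. The missing ingredient is a curvature \emph{comparability} estimate across each level set, and that is exactly what the paper extracts from the compactness of the limit: by the classification of compact $\kappa$-noncollapsed shrinkers with ${\rm Rm}\ge 0$ the split limit has diameter $\le A$ (cf.\ \eqref{diameter-N}), and by \cite[Lemma 1.3 and Lemma 2.6]{ZZ-4d} this yields $C_0^{-1}R(q)\le R(x)\le C_0 R(q)$ for all $x,q$ on the same level set $\Sigma_{f(p_i')}$. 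The paper then pairs this with the $\ell$-center estimate $R(q_i)\le C/\tau_i$ from Lemma \ref{upper-boundofR} (Perelman's reduced‑length bound at the well-located base point $q_i$ of Proposition \ref{levelset-ell}), not with an integral flux identity.

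For the lower bound, your suggestion to ``transport back'' a type‑I bound $(-t)R\asymp 1$ from the limit does not by itself relate $R(p_i)$ to $\rho(p_i)$: the rescaled time window only controls $R$ at $p_i$ at times of order $r_i = R(p_i)^{-1}$, and connecting this to the distance scale $\rho(p_i)$ requires tracking $R$ along the full flow line until it reaches a compact set — which is precisely the ODE argument the paper runs in Proposition \ref{ell-curvature-lower}. That proposition uses the Laplace estimate $|\Delta R|/R^2 \le C$ (Lemma \ref{uniform-shi}) to get $\left|\partial_t R^{-1}\right| \le C_0+2$, integrates this along the soliton flow, and combines it with $|t_p|\asymp f(p)$ from the soliton identities. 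Notably, this lower bound does not use the compactness of the split limit, nor the type‑I conclusion of Theorem \ref{main-notype2}, at all — it holds under the standing curvature hypotheses alone. So if you pursued your lower‑bound sketch to completion, you would in effect reconstruct the paper's ODE argument and discover the type‑I input was superfluous there. Your correct observation that the various estimates ($f\asymp\rho$, curvature decay, confinement of flow lines) are mutually dependent is resolved in the paper by first establishing \eqref{R-decay} and \eqref{gradient-esti} (hence \eqref{linear-f}) from \cite[Lemma 2.2]{ZZ-4d} and \cite[Lemma 2.2]{DZ-SCM} before invoking Sections 1--4, rather than by an induction on scale.
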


Theorem \ref{compact-linear-decay} is  an improvement of  \cite[Lemma 2.2]{ZZ-4d} (also see (\ref{R-decay})).     We notice   that complete  noncompact $\kappa$-noncollapsed  steady  gradient   Ricci solitons   with nonnegative  curvature under the condition  (\ref{R-decay-linearly})  have  been  classified by  Deng-Zhu \cite{DZ3, DZ-JEMS, DZ-SCM}.

 The paper is organized as follows. In Section 1, we  first review a  compactness  theorem  for  normally rescaled Ricci flows of $(M, g)$  in \cite{ZZ-4d} (cf. Proposition \ref{Dimension-reduction}),  then we recall the Perelman's $\mathcal L$-geodesic theory and translate it for a steady  gradient Ricci soliton.
   In Section 2, we  use  a method  in \cite{BCMZ} to  prove  the existence of $\ell$-centers in each level set of the potential function $f$.  In Section 3,   we do the curvature estimates for $\ell$-centers (cf.  Lemma  \ref{upper-boundofR} and Proposition \ref{ell-curvature-lower}).   In Section 4,  we get the gradient estimate for the  (rescaled)  reduced distance  function $\ell_i(x,\tau)$ and then construct a limit  shrinking Ricci solitons through the normally rescaled Ricci flows of $(M, g)$ (cf. Proposition \ref{local-ell-estimate} and Proposition \ref{Pel-shrinker}).  Both of  Theorem \ref{main-notype2} and Theorem   \ref{compact-linear-decay}  will be   proved in Section 5.

Since $3d$ complete  noncompact $\kappa$-noncollapsed  steady  gradient   Ricci solitons have  been  classified by Brendle motivated by  a conjecture of Perelman \cite{Bre-3d}, we will always assume  that  the solitons  are of  dimension $n\ge 4$  and  not Ricci flat  in this paper.  

\vskip5mm

$\mathbf {Acknowledgments.}$  The authors would like to thank  the referee for many valuable comments   on   their  paper.

\section{Preliminaries}

A complete Riemannian metric $g$ on  $M$ is  called a gradient Ricci soliton if there exists a smooth function $f$ (which is called a potential  function)  on $M$ such that
  \begin{equation}\label{Def-soliton}
  R_{ij}(g)+\rho g_{ij}=\nabla_{i}\nabla_{j}f,
  \end{equation}
  where $\rho$ is a constant. The gradient Ricci soliton is called expanding,  steady and shrinking  according to  $\rho >,  = ,  <0$,  respectively.     These three types  of   Ricci solitons  correspond to three different blow-up solutions  of Ricci flow  \cite{Ham-singular}.

   In  the case of  steady Ricci solitons, we can rewrite (\ref{Def-soliton}) as
    \begin{align}\label{soliton-equation} 2\operatorname{Ric}(g)=L_Xg,
    \end{align}
    where  $L_X $ is the Lie operator along   the gradient vector field  (VF)  $X = \nabla f$  generalized by $f$.
  Let $\{\phi^*_t\}_{t\in(-\infty,\infty)}$ be   the  one-parameter subgroup of transformations   generated by $-X$.   Then
  $g(t)=\phi^*_t(g)$ ($t\in(-\infty,\infty))$ is a solution of Ricci flow. Namely,  $g(t)$ satisfies
  \begin{align}\label{ricci-equ}\frac{\partial g}{\partial t}=-2{\rm Ric}(g), ~ g(0)=g.
  \end{align}
  For simplicity,  we call  $g(t)$  the soliton  Ricci flow of  $(M, g)$.

   By (\ref{soliton-equation}), we have
   \begin{align}\label{R-monotonicity}
       \langle\nabla R,\nabla f\rangle=- 2\operatorname{Ric}(\nabla f,\nabla f),
   \end{align}
   where $R$ is the scalar curvature of $g$.
   It follows
   $$R+|\nabla f|^2={\rm Const.}$$
    Since  any (non Ricci-flat) steady gradient Ricci soliton  has  positive  scalar curvature  (\cite{Zh, Ch}),  the above equation can be normalized by
   \begin{align}\label{scalar-equ} R+|\nabla f|^2=1.
   \end{align}

In this paper we always assume that $(M^n, g)$  $(n\ge 4)$ is a noncompact $\kappa$-noncollapsed  steady  gradient Ricci soliton with nonnegative curvature operator
       $\rm{Rm}\geq 0$ away from a compact set  $K$ of $M$.

        The following splitting theorem was proved  in  \cite[Proposition 1.2]{ZZ-4d},  which can be regarded as a version of  Perelman's compactness theorem for  higher dimensional  ancient  $\kappa$-solutions \cite{P, KL}.

 \begin{prop}\label{Dimension-reduction}
     Let $(M^n, g)$  $(n\ge 4)$ be a noncompact $\kappa$-noncollapsed  steady  gradient Ricci soliton with
       $\rm{Rm}\geq 0$ on  $M\setminus K$.   Let   $p_i\rightarrow \infty$ and $(M, g_{p_i}(t); p_i)$  a sequence of rescaled  flows with   $ R_{p_i}\left(p_i, 0\right)=1$ as in (\ref{rescaling-flow}) .  Then $(M, g_{p_i}(t); ~$
       $p_i)$ subsequently converge to a  splitting flow $(N \times \mathbb{R}, \bar {g}(t)=h(t) +ds^2; p_\infty)$ as
      in  (\ref{splitt-solution}), where  $(N, {g}(t))$ is an   ancient $\kappa$-solution of codimension one.    Moreover,   for $n=4$, $\rm{Rm}\geq 0$ can be weakened to the sectional curvature  $\rm{Km}\geq 0$  on  $M\setminus K$.

 \end{prop}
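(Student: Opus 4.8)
The plan is to adapt Perelman's dimension-reduction scheme for ancient $\kappa$-solutions to the steady soliton setting. The first step is to establish local, scale-invariant curvature bounds for the rescaled flows $(M,g_{p_i}(t))$ near $p_i$. The normalization $R+|\nabla f|^2=1$ gives $0<R\le 1$ everywhere on $M$, and $\mathrm{Rm}\ge 0$ on $M\setminus K$ forces $|\mathrm{Rm}|\le c\,R$ there. Since $p_i\to\infty$, for each fixed $\rho>0$ the $g_{p_i}(0)$-ball $B_{g_{p_i}(0)}(p_i,\rho)$ is, for $i$ large, a $g$-ball of radius $\rho\,r_i^{1/2}$ contained in $M\setminus K$; combining $R\le 1$ with an oscillation (Harnack-type) estimate for the scalar curvature of the soliton Ricci flow, which is ancient and has $\mathrm{Rm}\ge 0$ on the region in play, one gets $R\le C(\rho)\,R(p_i)$ there, i.e.\ $|\mathrm{Rm}_{g_{p_i}(t)}|\le C(\rho)$ on $B_{g_{p_i}(0)}(p_i,\rho)$ for all $t\le 0$. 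The rescaled flows stay $\kappa$-noncollapsed, inherited from $(M,g)$; here one also uses $R(p_i)\to 0$, i.e.\ that $R\to 0$ at infinity for a $\kappa$-noncollapsed steady soliton with $\mathrm{Rm}\ge 0$ off a compact set.

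With these bounds, Hamilton's compactness theorem for Ricci flows yields, after passing to a subsequence, a pointed Cheeger--Gromov limit $(M_\infty,g_\infty(t);p_\infty)$, $t\in(-\infty,0]$, which is complete, has bounded curvature on compact time intervals, has $\mathrm{Rm}\ge 0$ (the lower bound passes to the limit because the base points escape $K$), and is $\kappa'$-noncollapsed; so $(M_\infty,g_\infty(t))$ is an $n$-dimensional ancient $\kappa'$-solution. To split off a line I would rescale the potential as $\hat f_i:=r_i^{-1/2}\bigl(f-f(p_i)\bigr)$. Since $|\nabla^{g_{p_i}}u|_{g_{p_i}}^2=r_i\,|\nabla^g u|_g^2$ and $|\nabla^2_{g_{p_i}}u|_{g_{p_i}}=r_i\,|\nabla^2_g u|_g$, and since $\nabla^2 f=\mathrm{Ric}$ is scale-invariant as a $(0,2)$-tensor, one computes $|\nabla\hat f_i|_{g_{p_i}(0)}^2=1-R\to 1$ and $|\nabla^2\hat f_i|_{g_{p_i}(0)}=r_i^{-1/2}\,|\mathrm{Ric}_{g_{p_i}(0)}|_{g_{p_i}(0)}\to 0$, uniformly on compact subsets (using $R\to 0$ at infinity and the curvature bound above). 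As $\hat f_i(p_i)=0$ and $|\nabla\hat f_i|$ is bounded, the $\hat f_i$ subconverge in $C^{1,\alpha}_{\mathrm{loc}}$ to a function $f_\infty$ on $M_\infty$ with $|\nabla f_\infty|\equiv 1$ and $\nabla^2 f_\infty\equiv 0$.

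A function with parallel unit gradient splits $M_\infty$ isometrically at time $0$ as $(N^{n-1}\times\mathbb{R},\,h+ds^2)$, with $\partial_s=\nabla f_\infty$ and $f_\infty=s$. Since then $\mathrm{Ric}_{g_\infty(0)}(\partial_s,\cdot)=0$, the splitting persists under the flow (the orthogonal distribution stays parallel; equivalently, apply the strong maximum principle to $\mathrm{Ric}\ge 0$ in the direction $\partial_s$), so $g_\infty(t)=h(t)+ds^2$ with $h(t)$ a Ricci flow on $N$ for $t\in(-\infty,0]$. Restricting the curvature and noncollapsing data, $(N,h(t))$ has bounded nonnegative curvature operator and is $\kappa$-noncollapsed, hence is an $(n-1)$-dimensional ancient $\kappa$-solution, as claimed. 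For the $n=4$ addendum, $\mathrm{Km}\ge 0$ on $M\setminus K$ still yields $|\mathrm{Rm}|\le c\,R$ there, so the above runs unchanged except that $(M_\infty,g_\infty)$ a priori only has $\mathrm{Km}\ge 0$; but a $4$-dimensional ancient $\kappa$-solution with $\mathrm{Km}\ge 0$ has $\mathrm{Rm}\ge 0$, and in any case the split $3$-dimensional factor $(N,h(t))$ automatically has nonnegative curvature operator.

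The main obstacle is the first step: extracting genuinely local, scale-invariant curvature bounds around $p_i$ from the merely global bound $R\le 1$ together with a curvature sign valid only on $M\setminus K$ — this is where the ancientness of the soliton flow and Perelman-type curvature estimates for $\kappa$-solutions are essential, and where the restriction of the hypothesis to $M\setminus K$ must be tracked carefully so that, at the natural scale $r_i^{1/2}$ around $p_i$, one is genuinely in the region of nonnegative curvature.
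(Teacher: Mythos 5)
Your overall architecture is the right one and matches the framework of the cited proof (Perelman-style dimension reduction for $\kappa$-solutions adapted to a steady soliton): get scale-invariant curvature bounds at the rescaled scale, extract a pointed Cheeger--Gromov limit via Hamilton compactness, show that the rescaled potential $\hat f_i = r_i^{-1/2}(f-f(p_i))$ subconverges to a function with parallel unit gradient, and split off the $\mathbb{R}$-factor via the de Rham decomposition, then run the strong maximum principle to propagate the splitting in time. The Hessian computation for $\hat f_i$ is correct. The $n=4$ addendum is ultimately handled correctly by your fallback remark that the split $3$-dimensional factor automatically has nonnegative curvature operator (your earlier claim that a $4$d ancient $\kappa$-solution with $\mathrm{Km}\ge 0$ has $\mathrm{Rm}\ge 0$ is unjustified and should be dropped, but it is not needed).

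However, the proof is incomplete exactly at the place you flag as ``the main obstacle,'' and flagging it does not dispose of it. Two unproved assertions carry the whole argument. First, you need $R(p_i)\,d_g(p_i,K)^2\to\infty$ so that the balls $B_{g_{p_i}(0)}(p_i,\rho)$, which are $g$-balls of radius $\rho\,R(p_i)^{-1/2}$, eventually lie in $M\setminus K$; without this, the oscillation/Harnack estimate you invoke (which requires $\mathrm{Rm}\ge 0$ on the region and along the worldline) has no region to apply on, and the nonnegativity of $\mathrm{Rm}$ does not pass to the limit. This curvature-ratio blowup is not a formal consequence of the hypotheses and is precisely what a Perelman-type point-picking argument is for; you assume it rather than prove it. Second, you cite $R\to 0$ at infinity ``for a $\kappa$-noncollapsed steady soliton with $\mathrm{Rm}\ge 0$ off a compact set'' as known; in this paper that decay is only deduced (via \cite[Lemma 2.2]{ZZ-4d}) under the additional hypothesis $\mathrm{Ric}>0$ on $M\setminus K$, which is not part of the Proposition's hypotheses. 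As written, your use of $R(p_i)\to 0$ both for the curvature bound and for the $|\nabla^2\hat f_i|\to 0$ estimate is unsupported. Until these two facts are actually established (or replaced by a point-picking/contradiction scheme that does not presuppose them), the limit extraction and the splitting both rest on unproved foundations.
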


 By Proposition \ref{Dimension-reduction}, we have the following Laplace estimate for the scalar curvature of $(M, g)$, which will be used in Section 3.

  \begin{lem}\label{uniform-shi}
     Let $(M,g)$ be the $\kappa$-noncollapsed steady gradient Ricci soliton with ${\rm Rm} \geq 0$ on $M\setminus K$. Then there exists a uniform constant $C>0$, such that
     \begin{align}\label{laplace-est}
         \frac{|\Delta R(p,t)|}{R^2(p,t)}\leq C,~\forall (p,t)\in M\times (-\infty, 0].
     \end{align}
 \end{lem}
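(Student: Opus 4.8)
The plan is to combine the standard Shi-type derivative estimates for Ricci flow with the dimension-reduction structure provided by Proposition \ref{Dimension-reduction}. The statement \eqref{laplace-est} is scale-invariant: under the rescaling $g_p(t) = R(p,0)\, g(\cdot, R(p,0)^{-1}t)$ one has $|\Delta R|/R^2$ unchanged, so it suffices to bound $|\Delta_{g_p(0)} R_{g_p(0)}(p,0)|$ uniformly in $p$. Equivalently, using that the soliton Ricci flow is generated by the gradient flow of $-\nabla f$ (so all points and times are isometric copies of $(M,g)$ via $\phi_t$), it suffices to prove the pointwise bound $|\Delta R(p)| \le C R(p)^2$ on $(M,g)$ with a constant independent of $p$.

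First I would dispose of the region near the compact set $K$. On the compact set $\overline{\{\rho(x)\le \rho_0\}}$ for a suitable $\rho_0$ (enlarging $K$ slightly), both $|\Delta R|$ and $R$ are bounded away from $0$ and $\infty$ by continuity and the fact that $R>0$ everywhere (\cite{Zh, Ch}), so \eqref{laplace-est} holds there trivially. The content is therefore the behavior as $p\to\infty$. Here I would argue by contradiction: suppose $p_i\to\infty$ with $|\Delta R(p_i)|/R(p_i)^2 \to \infty$. Form the rescaled flows $(M, g_{p_i}(t); p_i)$ with $R_{p_i}(p_i,0)=1$ as in \eqref{rescaling-flow}. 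By Proposition \ref{Dimension-reduction} these converge subsequentially in the Cheeger–Gromov sense to a splitting ancient $\kappa$-solution $(N\times\mathbb R, \bar g(t); p_\infty)$, which in particular is a smooth complete ancient Ricci flow with bounded curvature on compact time intervals. Smooth convergence of the flows on a parabolic neighborhood of $(p_\infty, 0)$ forces convergence of the curvature and all its covariant derivatives; in particular $\Delta_{g_{p_i}(0)} R_{g_{p_i}(0)}(p_i,0) \to \Delta_{\bar g(0)} R_{\bar g(0)}(p_\infty,0)$, which is finite. Since $R_{p_i}(p_i,0)=1$, this says $|\Delta R(p_i)|/R(p_i)^2$ stays bounded along the subsequence, contradicting the assumption. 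Running this for every subsequence gives the uniform bound.

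The one technical point that deserves care — and which I expect to be the main obstacle — is justifying the smooth (in particular $C^2$) convergence of the curvature at the base point, given that the nonnegativity hypothesis $\mathrm{Rm}\ge 0$ only holds on $M\setminus K$, not globally. The Cheeger–Gromov convergence in Proposition \ref{Dimension-reduction} is a convergence of pointed flows $(M, g_{p_i}(t); p_i)$; since $p_i\to\infty$ in $M$, the base points eventually leave any fixed neighborhood of $K$, and after rescaling (which blows up distances, as $R(p_i)\to 0$ by the curvature decay already available, e.g.\ from the compactness statement itself) the set $K$ recedes to infinite distance from $p_i$ in the rescaled metric. Hence on any fixed parabolic ball around $(p_\infty,0)$ the limiting flow is genuinely a limit of regions where $\mathrm{Rm}\ge 0$ holds, the curvature is locally uniformly bounded there, and Shi's local estimates apply to give uniform $C^k$ bounds on $R_{g_{p_i}(t)}$ near the base point; standard Arzelà–Ascoli then upgrades $C^0$ to $C^\infty$ convergence. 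Once this is in place the contradiction argument closes immediately, and I would remark that the same reasoning in fact yields uniform bounds on $|\nabla^k \mathrm{Rm}|/R^{1+k/2}$ for all $k$, of which \eqref{laplace-est} is the case relevant to the sequel.
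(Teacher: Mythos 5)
Your proposal is correct and follows essentially the same route as the paper's proof: both reduce the space-time statement to a bound at $t=0$ via the isometries $\phi_t$, argue by contradiction along a sequence $p_i\to\infty$, invoke Proposition \ref{Dimension-reduction} for Cheeger--Gromov convergence of the rescaled flows, and use Shi's estimates to upgrade to a uniform bound on $\Delta R$ at the base point. The paper handles the bounded-sequence case in one line rather than enlarging $K$, and does not spell out the technical remark about $K$ receding to infinite rescaled distance, but these are presentational differences only.
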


 \begin{proof}
     On the contrary, if (\ref{laplace-est}) is not true,
there  is a sequence of $p_i$ and $t_i$ such that
     \begin{align}\label{Delta-R-infty}
         \frac{|\Delta R(p_i,t_i)|}{R^2(p_i,t_i)}\to \infty.
     \end{align}
     Then we consider   rescaled flows $(M, g_i(t); p_i)$, where
     \begin{align*}
         g_i(t) = R(p_i,t_i)g(R^{-1}(p_i,t_i)t+t_i).
     \end{align*}
     By the isometry $(M,g(t_i);p_i )\cong (M, g, \phi_{t_i}(p_i))$, it is easy to see
       \begin{align}
         (M,g_i(t);p_i ) \cong (M, g_{\phi_{t_i}(p_i)}(t) ; \phi_{t_i}(p_i)),
     \end{align}
     where
     \begin{align*}
         g_{\phi_{t_i}(p_i)}(t) = R_g(\phi_{t_i}(p_i))g(R_g^{-1}(\phi_{t_i}(p_i))t).
     \end{align*}
     We may assume that  $\phi_{t_i}(p_i)\to\infty $, otherwise,
           \begin{align*}
     \frac{|\Delta R(p_i,t_i)|}{R^2(p_i,t_i)} = \frac{|\Delta_g R_g(\phi_{t_i}(p_i))|_g}{R_g^2(\phi_{t_i}(p_i))}\leq C_0,
     \end{align*}
     which contradicts to \eqref{Delta-R-infty}.

     By Proposition \ref{Dimension-reduction},  the scaled Ricci flows $(M, g_{\phi_{t_i}(p_i)}(t) ; \phi_{t_i}(p_i))$ converge  in the Cheeger-Gromov sense. Thus
     \begin{align*}
         \left|\operatorname{Rm}\left(q, g_{\phi_{t_i}(p_i)}(t)\right)\right| \leq C_1, \forall(q, t) \in B\left(\phi_{t_i}(p_i), 1 ; g_{\phi_{t_i}(p_i)}(-1)\right) \times[-1,0] .
     \end{align*}
 By the Shi's estimate \cite{Shi}, we get
 \begin{align*}
     \left|\Delta R\left(q, g_{\phi_{t_i}(p_i)}(t)\right)\right| \leq C_2, \forall (q,t) \in B\left(\phi_{t_i}(p_i), \frac{1}{2} ; g_{\phi_{t_i}(p_i)}(-1)\right) \times[-\frac{1}{2}, 0].
 \end{align*}
 In particular,
 \begin{align*}
     \left|\Delta R\left(\phi_{t_i}(p_i), g_{\phi_{t_i}(p_i)}(0)\right)\right|=\frac{|\Delta_g R_g(\phi_{t_i}(p_i))|_g}{R_g^2(\phi_{t_i}(p_i))}=\frac{|\Delta R(p_i,t_i)|}{R^2(p_i,t_i)}\leq C_2,
 \end{align*}
 which is  a contradiction with \eqref{Delta-R-infty}. The lemma is proved.
 \end{proof}

We  note  that  an ancient $\kappa$-solution is a  $\kappa$-noncollapsed  solution of Ricci flow (\ref{ricci-equ})   with  ${\rm R_m}(\cdot, t)\ge 0$ defined for any $t\in (-\infty,  0]$. The purpose of paper is to use Perelman's $\mathcal L$-geodesic theory  to study the geometry of   split  ancient $\kappa$-solution $(N,  {g}(t))$ in Proposition \ref{Dimension-reduction}.

\subsection{$\mathcal L$-length and $\mathcal L$-geodesic}

 Let $(M, \hat g(t))$ $(t\in [0, \infty)$)   be a backward Ricci flow on $M$,  namely, $\hat g(t)$ satisfies
  \begin{align}\label{backricci-equ}\frac{\partial \hat g}{\partial t}=2{\rm Ric}(\hat g), ~ \hat g(0)=g.
  \end{align}
For any $\tau > 0 $ and any piecewisely smooth curve $\Gamma : [0, \tau] \to M$ with $\Gamma(0) = o$ and $\Gamma(\tau) = p$,   $\mathcal{L}$-length of  $\Gamma$ is  defined by
 (cf.  Perelman \cite[Section 7]{P}),
 \begin{align}\label{L-length-soliton-0-hat}
  \mathcal{L}(\Gamma):=\int_0^\tau \sqrt{s}\left(R_{\hat g_{s}}+|\dot{\Gamma}|_{\hat g_{s}}^2\right)(\Gamma(s)) ds.
\end{align}
Then for any pair  $(x, \tau)$,
we define a function (called  $\mathcal L$-distance function) by
$$
L\left(x, \tau\right):=\inf _{\Gamma} \mathcal{L}(\Gamma),
$$
 The above infimum is taken for any   piecewise  smooth curve $\Gamma : [0, \tau] \to M$ with $\Gamma(0) = o$ and $\Gamma(\tau) = x$.  By Perelman \cite[Section 7]{P},   the  infimum  can be   attained  by a smooth curve which is  called a minimal  $ \mathcal L$-geodesic for the pair $(x, \tau)$.

Set
 \begin{align}\label{l-length}
\ell(x, \tau):=\frac{1}{2 \sqrt{\tau}} L(x, \tau),
\end{align}
which is called the  reduced distance of  backward Ricci flow $(M, \hat g(t))$.
 By \cite[Section 7.1]{P},  we know that  for any $\tau>0$, there exists a point $x\in M$ such that $\ell\left(x, \tau\right) \leq n / 2$. Any such point $x$ is called an $\ell$-center of $(M, \hat g(t))$  at time $\tau$.

The following lemmas was proved by  Perelman's work \cite{P}.

 \begin{lem}(\cite{P}, also see \cite[Section 18]{KL})\label{first-order-variation}
    Let $\Gamma(s) $  $(s\in [0,\tau])$  be a minimal $\mathcal {L}$-geodesic with $\Gamma(0) = o$ and $\Gamma(\tau) = x$ on $(M, \hat g(t))$ $(t\in [0, \infty)$). Let $Y(\tau) = \frac{d\Gamma}{ds}(\tau)$.  Then we have
    \begin{align}\label{grad-ell}
        |\nabla L|^2(x,\tau)& = 4\tau |Y(\tau)|^2(x)\notag\\
        & = -4\tau R(x, -\tau) + 4\tau (R(x, -\tau) + |Y(\tau)|^2(x))
    \end{align}
    and
    \begin{align}\label{ell-harnack}
        \tau^{\frac{3}{2}}\left(R(x, -\tau)+|Y(\tau)|^2(x)\right)
        =-K(x,\tau) + \frac{1}{2}L(x,\tau),
    \end{align}
    where
    \begin{align}\label{K-formula}
        K(x,\tau) = \int_0^\tau s^{\frac{3}{2}}(-\frac{\partial R}{\partial \tau}-2\langle Y, \nabla R\rangle+2 \operatorname{Ric}(Y, Y) - \frac{1}{s}R)(\Gamma(s)) ds.
    \end{align}
\end{lem}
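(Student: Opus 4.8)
The plan is to derive both identities from the first and second variation formulas for the $\mathcal L$-length along the minimal $\mathcal L$-geodesic $\Gamma$, exactly as in Perelman \cite[Section 7]{P}. First I would recall that, since $\Gamma$ is minimizing, the $\mathcal L$-distance function $L(\cdot,\tau)$ is smooth near $x$ and its gradient is computed from the first variation of $\mathcal L$ with a free endpoint: if one varies the endpoint in direction $V$, then
\begin{align*}
\delta_V \mathcal L(\Gamma) = 2\sqrt{\tau}\,\langle Y(\tau), V\rangle,
\end{align*}
so that $\nabla L(x,\tau) = 2\sqrt{\tau}\,Y(\tau)$ and hence $|\nabla L|^2(x,\tau) = 4\tau\,|Y(\tau)|^2(x)$. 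The second line of \eqref{grad-ell} is then just the algebraic identity $4\tau|Y|^2 = -4\tau R + 4\tau(R + |Y|^2)$, trivially true; its point is to isolate the ``error term'' $R + |Y|^2$ that also appears in \eqref{ell-harnack}.

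Next I would establish \eqref{ell-harnack}. The key is the $\tau$-derivative of $L$ along the geodesic: differentiating $L(\Gamma(\tau),\tau)$ and comparing with $\frac{\partial L}{\partial \tau}$, one gets the standard relation
\begin{align*}
\frac{\partial L}{\partial \tau}(x,\tau) = 2\sqrt{\tau}\,R(x,-\tau) - \langle \nabla L, Y\rangle = 2\sqrt{\tau}\, R(x,-\tau) - 2\sqrt{\tau}\,|Y(\tau)|^2(x).
\end{align*}
On the other hand, the $\mathcal L$-geodesic equation $\nabla_Y Y - \tfrac12 \nabla R + \tfrac{1}{2s}Y + 2\operatorname{Ric}(Y,\cdot) = 0$ gives an ODE for the quantity $s^{3/2}(R + |Y|^2)$ along $\Gamma$; integrating it from $0$ to $\tau$ and using the definition \eqref{K-formula} of $K$, one obtains
\begin{align*}
\tau^{3/2}\bigl(R(x,-\tau)+|Y(\tau)|^2(x)\bigr) = \tfrac12 L(x,\tau) - K(x,\tau),
\end{align*}
which is precisely \eqref{ell-harnack}. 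Here I would use that $\sqrt{s}\,|\dot\Gamma|^2 \to 0$ as $s\to 0$ for a minimizing $\mathcal L$-geodesic (finite $\mathcal L$-length forces this), so the boundary term at $s=0$ vanishes.

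The main technical point — and the only place any care is needed — is justifying the smoothness of $L(\cdot,\tau)$ at $x$ and the interchange of differentiation with the integral, i.e.\ that $x$ is not a conjugate point and the minimal $\mathcal L$-geodesic is essentially unique; this is handled exactly as in Perelman (via the $\mathcal L$-exponential map and the second variation formula), and since the statement is quoted verbatim from \cite{P} I would simply cite \cite[Section 7]{P} and \cite[Section 18]{KL} for these facts rather than reproduce the argument. Everything else is a direct manipulation of the first variation formula and the $\mathcal L$-geodesic equation.
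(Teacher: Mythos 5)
Your proposal is correct and follows exactly the standard argument from Perelman \cite[Section~7]{P} and Kleiner--Lott \cite[Section~18]{KL}; the paper itself gives no proof for this lemma but simply cites those references, so there is nothing to compare against beyond the standard derivation, which you reproduce faithfully. Two small imprecisions worth flagging: (i) your displayed formula $\partial L/\partial\tau = 2\sqrt{\tau}R - \langle\nabla L, Y\rangle$ is off by a factor of two --- since $\frac{d}{d\tau}L(\Gamma(\tau),\tau)=\sqrt{\tau}\bigl(R+|Y|^2\bigr)$ one gets $\partial L/\partial\tau = \sqrt{\tau}R - \sqrt{\tau}|Y|^2 = 2\sqrt{\tau}R - \sqrt{\tau}(R+|Y|^2)$, not $2\sqrt{\tau}R - 2\sqrt{\tau}|Y|^2$ --- but this identity is not actually used in your derivation of \eqref{ell-harnack}, so it does not create a gap; (ii) the vanishing of the boundary term $s^{3/2}(R+|Y|^2)$ at $s=0$ is not a direct consequence of finite $\mathcal L$-length (integrability does not give pointwise decay); the correct justification is that $\sqrt{s}\,Y(s)$ has a finite limit as $s\to 0$, which follows from the $\mathcal L$-geodesic equation after the substitution $u=\sqrt{s}$, so that $s^{3/2}|Y|^2 = s\cdot(\sqrt{s}|Y|^2) \to 0$. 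Both points are handled correctly in the cited references and do not affect the validity of the proof outline.
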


\begin{lem}(\cite{P}, also see \cite[Lemma 2.19]{Ye})\label{ell-differential-equation}
    \begin{align}\label{ell-equ}
        \frac{\partial \ell}{\partial \tau}-\frac{R}{2}+\frac{|\nabla \ell|^2}{2}+\frac{\ell}{2 \tau}=0,
    \end{align}
    \begin{align}\label{ell-geq}
        \frac{\partial \ell}{\partial \tau}-\Delta \ell+|\nabla \ell|^2-R+\frac{n}{2 \tau} \geq 0,
    \end{align}
    and
    \begin{align}\label{ell-leq}
        \Delta \ell-\frac{|\nabla \ell|^2}{2}+\frac{R}{2}+\frac{\ell-n}{2 \tau} \leq 0 .
    \end{align}
    Moreover, \eqref{ell-geq} becomes an equality at a point if and only if \eqref{ell-leq} becomes an equality at that point.
\end{lem}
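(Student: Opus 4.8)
The plan is to extract all three relations from the first- and second-variation formulas for the $\mathcal{L}$-length along a minimal $\mathcal{L}$-geodesic, in the spirit of Perelman \cite[Section 7]{P} (see also \cite[Section 18]{KL} and \cite{Ye}). Fix a pair $(x,\tau)$ and a minimal $\mathcal{L}$-geodesic $\Gamma:[0,\tau]\to M$ with $\Gamma(0)=o$, $\Gamma(\tau)=x$, and put $Y=\dot\Gamma(\tau)$; at points where $x\mapsto L(x,\tau)$ fails to be smooth the identities and inequalities below are to be read in the barrier (support-function) sense, using that $\Gamma|_{[0,s]}$ is again minimal for every $s\le\tau$. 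The first variation of $\mathcal{L}$ gives $\nabla L(x,\tau)=2\sqrt\tau\,Y$, hence $|\nabla\ell|^2=|Y|^2$, which is just a restatement of \eqref{grad-ell}. Differentiating $s\mapsto L(\Gamma(s),s)=\mathcal{L}(\Gamma|_{[0,s]})$ at $s=\tau$ by the fundamental theorem of calculus and comparing with the chain-rule expression $\langle\nabla L,Y\rangle+\partial_\tau L$ yields
\[
\frac{\partial L}{\partial\tau}(x,\tau)=\sqrt\tau\,R(x,-\tau)-\frac{|\nabla L|^2}{4\sqrt\tau}.
\]
Substituting this together with $|\nabla L|^2=4\tau|\nabla\ell|^2$ and $L=2\sqrt\tau\,\ell$ into $\partial_\tau\ell=\frac{1}{2\sqrt\tau}\partial_\tau L-\frac{\ell}{2\tau}$ gives \eqref{ell-equ} directly.

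For the Laplacian inequality \eqref{ell-leq}, the essential input is the second-variation estimate for $\Delta L$: testing the second variation of $\mathcal{L}$ with the fields obtained by parallel-translating an orthonormal basis at $x$ along $\Gamma$ and rescaling by $\sqrt{s/\tau}$, and comparing with genuine $\mathcal{L}$-Jacobi fields via the index form, one obtains along a minimal $\mathcal{L}$-geodesic the bound
\[
\Delta L(x,\tau)\le \frac{n}{\sqrt\tau}-2\sqrt\tau\,R(x,-\tau)-\frac{1}{\tau}K(x,\tau),
\]
with $K$ as in \eqref{K-formula} (this is where Hamilton's trace Harnack expression enters). I would then eliminate $K$ by means of the identity \eqref{ell-harnack}, i.e. $\tau^{3/2}\bigl(R(x,-\tau)+|Y|^2\bigr)=-K(x,\tau)+\tfrac12 L(x,\tau)$, so that the right-hand side of the bound becomes an expression in $R(x,-\tau)$, $|Y|^2=|\nabla\ell|^2$ and $L=2\sqrt\tau\,\ell$ alone; dividing by $2\sqrt\tau$ and collecting terms reproduces \eqref{ell-leq}.

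Finally, \eqref{ell-geq} is purely formal: using \eqref{ell-equ} to substitute for $\partial_\tau\ell$, the quantity $\partial_\tau\ell-\Delta\ell+|\nabla\ell|^2-R+\frac{n}{2\tau}$ becomes exactly the negative of the left-hand side of \eqref{ell-leq}, hence is $\ge 0$; and since \eqref{ell-equ} is an equality, \eqref{ell-geq} is saturated at a point if and only if \eqref{ell-leq} is. The only genuinely delicate step is the upper bound on $\Delta L$: it requires the complete second-variation computation for the $\mathcal{L}$-functional, the trace Harnack inequality, and careful handling of the $\mathcal{L}$-cut locus where $L$ is merely locally Lipschitz. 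Everything else is algebraic bookkeeping with the first-variation identities already recorded in Lemma \ref{first-order-variation}.
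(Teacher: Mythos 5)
The paper gives no proof of this lemma, citing \cite{P} and \cite[Lemma~2.19]{Ye}; your sketch correctly reproduces the standard Perelman argument. All the algebraic reductions check out: \eqref{ell-equ} follows exactly as you describe from the first-variation identities of Lemma~\ref{first-order-variation}, \eqref{ell-leq} comes from the second-variation bound $\Delta L\le \frac{n}{\sqrt\tau}-2\sqrt\tau R-\frac{1}{\tau}K$ after eliminating $K$ via \eqref{ell-harnack}, and \eqref{ell-geq} together with the equality statement is then the observation that, given \eqref{ell-equ}, the left-hand side of \eqref{ell-geq} is the negative of the left-hand side of \eqref{ell-leq} — so you have correctly identified the one genuinely nontrivial input as the index-form estimate for $\Delta L$ and treated everything else as bookkeeping.
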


\subsection{$\mathcal L$-length associated to  a steady Ricci soliton}

 Under the  variable change $t=-\tau$,  $\hat g(\tau)=g(-t)$  $(t\in (-\infty, 0])$ becomes a  backward Ricci flow on $[0, \infty)\times M$.
 Then    the length  $ \mathcal{L}(\Gamma)$ and  $\ell(x, \tau)$  associated to  $(M, \hat g(\tau))$  can be  translated  into ones for the soliton Ricci flow $(M, g(t))$, respectively.   Actually,  we have
 \begin{align}
 \label{L-length-soliton-0}
  \mathcal{L}(\Gamma)=\int_0^\tau \sqrt{s}\left(R_{g{(-s)}}+|\dot{\Gamma}|_{g{(-s)}}^2\right)(\Gamma(s)) ds.
\end{align}
Since the scalar curvature $R(\cdot)$ is always positive  by a result of Chen \cite{Ch}, the integral function in  (\ref{L-length-soliton-0}) is positive.

By the  isometry  between  $(\Gamma(s),    g(-s))$ and  $( \phi_{-s}(\Gamma(s)) ,  g = g(0) )$ for all $s\in[0,\tau]$,  (\ref{L-length-soliton-0}) becomes
\begin{align}
    \mathcal{L}(\Gamma)=\int_0^\tau \sqrt{s}\left(R_{g}+|(\phi_{-s})_*(\dot{\Gamma}(s))|_{g}^2\right)(\phi_{-s}(\Gamma(s))) ds.\notag
\end{align}
Let $\gamma(s) = \phi_{-s}(\Gamma(s))$, then
\begin{align*}
    \dot{ \gamma}(s) = \nabla f|_{\gamma} + (\phi_{-s})_*(\dot{\Gamma}(s)).
\end{align*}
It follows
\begin{align}\label{L-length-stable}
    \mathcal{L}(\Gamma)=\int_0^\tau \sqrt{s}\left(R_{g}+|\dot{\gamma}(s)-\nabla f|_{g}^2\right)(\gamma(s)) d s.
\end{align}
Hence, the integral function in  (\ref{L-length-stable}) is just for $\gamma(s)$-curve in $M$ with the fixed soliton metric $g$.
 Without confusion, we also call $\gamma(s)$ ($s\in [0,\tau]$) a minimal  $\mathcal L$-geodesic with $\gamma(0)=o$ and $p=\gamma(\tau)=\phi_{-\tau}(\Gamma(\tau))$ as long  as $(\Gamma(s), g(-s))$ is   a minimal  $\mathcal L$-geodesic with $\Gamma(0)=o$ and $x=\Gamma(\tau)$.

 As in  \cite{BCMZ},
 we write  the reduced distance $\ell(x, \tau)$ as
\begin{align}\label{lamda-function}\lambda(p,\tau) = \ell(\phi_\tau(p),\tau).
\end{align}
Thus
\[\lambda(p,\tau) = \lambda(\phi_{-\tau}(x), \tau) = \ell(x,\tau) \leq \frac{n}{2},\]
if $x$ is an $\ell$-center at time $-\tau$.
In the following we always use $\lambda(p,\tau)$  (or  $\ell(x, \tau)$)  to  study the location of  $p$  instead of  $\ell$-center  $x$.  Without confusion, we also call  $p$ a  $\ell$-center  as long as
\begin{align}\label{uniform-l}
\lambda(p,\tau)  \leq  A_0,
 \end{align}
 where $A_0\ge \frac{n}{2}$ is a fixed  constant, which will be  determined in  next section.

By a parameter change $u = \sqrt{s}$,  \eqref{L-length-stable} can be also written as
\begin{align}\label{L-length-u}
    \mathcal{L}(\Gamma):=\int_0^{\sqrt{\tau}} \left(2u^2R+\frac{1}{2}|\dot{\bar\gamma}-2u\nabla f|^2\right) d u,
\end{align}
where  $\bar\gamma(u) = \gamma(u^2)$. (\ref{L-length-u}) will be used often in next sections.

\section{Location of $\ell$-center}

In this section, we study the location of $\ell$-center $p_\tau$  which satisfies (\ref{uniform-l}) by the method in \cite{BCMZ}. From this section, we always assume that the potential function $f$ of  steady Ricci  soliton  $(M, g)$ satisfies
 \begin{align}\label{linear-f}
       c_1\rho(x)\le f(x)\le c_2\rho(x),
   \end{align}
   where  $c_1$ and $c_2$ are  two constants.
   Then we may further assume $f(o)=0$ and $f(p)\ge 0$ for any $p\in M$.  For each $\tau>0$, we   define a level set  of $f$ by
   $$\Sigma_\tau=\{p\in M|~f(p)=\tau\}.$$
   Clearly,  $\Sigma_\tau$ is compact.
    In the last section,  we will verify the condition (\ref{linear-f}) to prove the main results in  Introduction 0.

We first observe

\begin{lem}\label{lambda-o}
    $ \lambda(o,\tau)\geq \frac{\tau}{3}$ , for any $\tau>0$.
\end{lem}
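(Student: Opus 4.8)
The plan is to estimate $\lambda(o,\tau)=\ell(o,\tau)$ from below by using the characterization of $\mathcal{L}$-length. Recall $\ell(o,\tau)=\frac{1}{2\sqrt{\tau}}L(o,\tau)=\frac{1}{2\sqrt{\tau}}\inf_\Gamma \mathcal{L}(\Gamma)$, where the infimum runs over piecewise-smooth curves $\Gamma:[0,\tau]\to M$ with $\Gamma(0)=o$ and $\Gamma(\tau)=o$ (since we are computing the reduced distance at the point $o$ itself, this is a \emph{loop} based at $o$ in the picture $x=\phi_\tau(p)$ with $p=o$; equivalently $x=\Gamma(\tau)=o$). Using the normalization $R+|\nabla f|^2=1$ from \eqref{scalar-equ}, we have $R\ge 0$, so from \eqref{L-length-soliton-0} every admissible curve satisfies $\mathcal{L}(\Gamma)\ge 0$, but to get the sharp lower bound $\tau/3$ we must use $R$ more carefully near $o$.

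First I would exploit that $o$ lies in the compact set $K$ (or at least in a fixed compact region where $f$ is small), and use \eqref{scalar-equ}: since $|\nabla f|\le 1$ everywhere and $f(o)=0$, one has $f(\Gamma(s))\le \int_0^s|\nabla f||\dot\Gamma|\,dr$, which is controlled by the length of $\Gamma$. More directly: along any curve from $o$ back to $o$, the key point is that $R$ cannot be too small on a definite portion of the curve unless the curve travels far, and traveling far costs kinetic energy $\int\sqrt{s}\,|\dot\Gamma|^2$. I would split into two cases. If the $\mathcal L$-geodesic $\Gamma$ for $(o,\tau)$ stays inside a fixed small ball $B(o,r_0)$ on which $R\ge \delta_0>0$ (possible since $R>0$ on the compact set $\overline{B(o,r_0)}$), then $\mathcal{L}(\Gamma)\ge \delta_0\int_0^\tau\sqrt{s}\,ds=\frac{2\delta_0}{3}\tau^{3/2}$, giving $\ell(o,\tau)\ge\frac{\delta_0}{3}\tau$. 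If instead $\Gamma$ exits $B(o,r_0)$, then I would use the standard lower bound for the kinetic term: by Cauchy–Schwarz, $\int_0^\tau\sqrt{s}\,|\dot\Gamma|^2\,ds\ge \big(\int_0^\tau |\dot\Gamma|\,ds\big)^2\big/\int_0^\tau s^{-1/2}\,ds \ge (\mathrm{dist}(o,\partial B(o,r_0)))^2/(2\sqrt\tau)\cdot(\text{factor})$, and one checks this dominates $\tau^{3/2}$ when $\tau$ is bounded, while for large $\tau$ the curve necessarily spends a long time, again accruing cost; tuning the constant gives $\ge \tau/3$ after possibly shrinking $\delta_0$ and $r_0$. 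Actually the cleanest route is: since $R+|\nabla f|^2=1$ and $f(o)=0$, on the region where $f\le \tfrac23$ we have $R=1-|\nabla f|^2$; but a softer bound suffices — one only needs $R\ge \tfrac13$ on a neighborhood of $o$, which holds because $|\nabla f|^2(o)=1-R(o)<1$ and $|\nabla f|$ is continuous, so $|\nabla f|^2\le \tfrac23$ hence $R\ge\tfrac13$ on a fixed ball $B(o,r_0)$.

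So the real argument is: on $B(o,r_0)$, $R\ge \tfrac13$. For the $\mathcal{L}$-geodesic $\Gamma$ realizing $L(o,\tau)$: if $\Gamma\subset B(o,r_0)$ then $\mathcal L(\Gamma)\ge \tfrac13\int_0^\tau\sqrt s\,ds=\tfrac29\tau^{3/2}$, so $\ell(o,\tau)\ge\tfrac19\tau$ — which is \emph{not} quite $\tau/3$, so the constant in the lemma must come from a more careful choice, likely taking $r_0$ small enough that $R\ge 1-\epsilon$ on $B(o,r_0)$ (using $R(o)$ close to... no, $R(o)$ need not be close to $1$). I will therefore present the bound with the honest constant $c_0\tau$ for a fixed $c_0>0$ in the two-case split, and in the ``exits $B(o,r_0)$'' case use that $\mathcal L(\Gamma)\ge \tfrac12\int_0^\tau\sqrt s|\dot\Gamma|^2\,ds$ combined with the reparametrization $u=\sqrt s$ from \eqref{L-length-u}, $\mathcal L(\Gamma)\ge \tfrac14\int_0^{\sqrt\tau}|\dot{\bar\Gamma}|^2\,du\ge \tfrac{r_0^2}{4\sqrt\tau}$, so $\ell(o,\tau)\ge \tfrac{r_0^2}{8\tau}$, which beats $\tau/3$ only for small $\tau$; for $\tau$ bounded below this combines with a length-vs-time argument.

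\textbf{Main obstacle.} The genuine difficulty is matching the explicit constant $1/3$: the estimate is easy up to a multiplicative constant, and getting exactly $\tau/3$ presumably requires choosing the ball $B(o,r_0)$ so that $R\ge 1-\epsilon$ there is impossible in general, so instead one likely argues that \emph{either} $\Gamma$ stays where $R$ is at least some value and a direct integration works, \emph{or} $\Gamma$ travels a definite distance and for \emph{all} $\tau>0$ one optimizes the trade-off $\mathcal L(\Gamma)\ge \tfrac13\int_0^{s_0}\sqrt s\,ds + \tfrac12\int_{s_0}^\tau\sqrt s|\dot\Gamma|^2\,ds$ over the exit time $s_0$, the resulting infimum over all competitors being exactly $\tfrac23\tau^{3/2}$ up to the chosen neighborhood size. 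I would carry this out by a one-variable calculus optimization, expecting the constant $1/3$ to emerge (or to be an underestimate the authors are content with). If the clean constant resists, the fallback is to prove $\lambda(o,\tau)\ge c_0\tau$ for some fixed $c_0>0$, which is all that is used later; I would flag that the sharp value is inessential.
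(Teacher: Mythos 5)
Your proposal misses the key algebraic trick and would not yield the stated constant. You treat $R$ as a quantity to be lower-bounded on a ball near $o$, and try to combine that with a kinetic-energy estimate when the geodesic exits the ball, then optimize. This gives at best $\lambda(o,\tau)\ge c_0\tau$ for some non-explicit $c_0$, as you acknowledge. The actual argument is a clean global identity with no case split at all.

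The point is that the normalization \eqref{scalar-equ}, $R+|\nabla f|^2=1$, should not be used locally to bound $R$ from below; it should be used globally, combined with the completed square in \eqref{L-length-u}. Starting from
\begin{align*}
\mathcal L(\Gamma)=\int_0^{\sqrt\tau}\Bigl(2u^2R+\tfrac12|\dot{\tilde\gamma}-2u\nabla f|^2\Bigr)\,du,
\end{align*}
expand $\tfrac12|\dot{\tilde\gamma}-2u\nabla f|^2=\tfrac12|\dot{\tilde\gamma}|^2-2u\langle\dot{\tilde\gamma},\nabla f\rangle+2u^2|\nabla f|^2$. The terms $2u^2R+2u^2|\nabla f|^2=2u^2$ by \eqref{scalar-equ}, and $-2u\langle\dot{\tilde\gamma},\nabla f\rangle=-2u\,\tfrac{d}{du}(f\circ\tilde\gamma)$ is a cross term that integrates by parts to $2\int_0^{\sqrt\tau}f\circ\tilde\gamma\,du$ plus a boundary term that vanishes because $\Gamma$ is a loop at $o$ and $f(o)=0$. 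Hence $\mathcal L(\Gamma)=\tfrac23\tau^{3/2}+\int_0^{\sqrt\tau}\bigl(\tfrac12|\dot{\tilde\gamma}|^2+2f\circ\tilde\gamma\bigr)\,du\ge\tfrac23\tau^{3/2}$ since $f\ge0$, and dividing by $2\sqrt\tau$ gives $\lambda(o,\tau)\ge\tau/3$ exactly. You computed the reparametrization $u=\sqrt s$ and cited \eqref{L-length-u}, but did not see that the cross term is a total derivative along the curve; that is the missing idea. No local lower bound on $R$, no ball radius, no two-case split, and no optimization is needed.
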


\begin{proof}We note  that all $\Gamma$-curves are  loops at $o$  associated to the length $ \lambda(o,\tau)$.
   Then by (\ref{L-length-u}) together with  \eqref{scalar-equ}, we get
\begin{align}\label{L-length-equ}
\int_0^\tau \sqrt{s}  \left(R+|\dot{\gamma}-\nabla f|^2\right)& =\int_0^{\sqrt{\tau}}\left(\frac{1}{2}|\dot{\tilde{\gamma}}-2 u \nabla f|^2+2 u^2 R(\tilde{\gamma}(u))\right) d u \notag\\
& =\int_0^{\sqrt{\tau}}\left(\frac{1}{2}|\dot{\tilde{\gamma}}|^2-2 u(f \circ \tilde{\gamma}-f(o))^{\prime}+2 u^2\right) d u \notag\\
& =\frac{2}{3} \tau^{3 / 2}+\int_0^{\sqrt{\tau}}\left(\frac{1}{2}|\dot{\tilde{\gamma}}|^2+2f \circ \tilde{\gamma}(u)\right) d u\notag\\
&\geq \frac{2}{3} \tau^{3 / 2}.
\end{align}
Thus the lemma comes from (\ref{l-length}) immediately.
\end{proof}

The following lemma is due to \cite[Lemma 2.2 and Lemma 2.3]{BCMZ}.

\begin{lem}\label{lambda-distance}
    There is a universal constant $\alpha \in (0,1)$ such that for any $\tau\gg1$ and any $p'$ with $\lambda(p',\tau)\leq \frac{n}{2}$, it holds
  \begin{align}\label{distance-estimate}  {\rm dist}_g(o,p')\geq \alpha \tau.
  \end{align}
\end{lem}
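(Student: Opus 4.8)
The plan is to show that an $\ell$-center at time $\tau$ cannot be too close to the base point $o$, by comparing the $\mathcal L$-length of a minimal $\mathcal L$-geodesic reaching $p'$ against the trivial upper bound $\lambda(p',\tau)\le n/2$ coming from the definition of $\ell$-center. The key input is the rewriting \eqref{L-length-u} of the $\mathcal L$-length in terms of the $\bar\gamma$-curve with the fixed soliton metric $g$, together with the normalization \eqref{scalar-equ} ($R+|\nabla f|^2=1$) and the bound \eqref{linear-f} ($f\sim\rho$). First I would take a minimal $\mathcal L$-geodesic $\gamma$ (equivalently $\bar\gamma(u)=\gamma(u^2)$, $u\in[0,\sqrt\tau]$) from $o$ to $p'$ realizing $L(p',\tau)$, so that $2\sqrt\tau\,\ell(p',\tau)=\mathcal L(\gamma)\le n\sqrt\tau$. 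Expanding the square in \eqref{L-length-u} and using $|\nabla f|^2=1-R\le 1$, one gets a lower bound of $\mathcal L(\gamma)$ of the form $\tfrac12\int_0^{\sqrt\tau}|\dot{\bar\gamma}|^2\,du - (\text{cross term}) + \tfrac43\tau^{3/2}$, where, exactly as in the computation in Lemma \ref{lambda-o}, the $2u^2R$ term and part of the cross term combine to give the clean contribution $\tfrac43\tau^{3/2}$ plus $2\int_0^{\sqrt\tau} f\circ\bar\gamma(u)\,du$ after integrating $-2u(f\circ\bar\gamma)'$ by parts. The remaining cross term $-\int_0^{\sqrt\tau}\langle\dot{\bar\gamma},2u\nabla f\rangle\,du$ that is not absorbed must be controlled by Cauchy–Schwarz against $\tfrac12|\dot{\bar\gamma}|^2$.

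The heart of the argument is then a dichotomy on the size of $\mathrm{dist}_g(o,p')=:d$. By the Cauchy–Schwarz inequality the energy term controls the length: $\int_0^{\sqrt\tau}|\dot{\bar\gamma}|^2\,du\ge d^2/\sqrt\tau$. If $d\ge\alpha\tau$ we are done, so suppose $d<\alpha\tau$ for a small $\alpha$ to be chosen; I would then argue that the curve $\bar\gamma$, to keep its energy comparable to $d^2/\sqrt\tau\ll\tau^{3/2}$, must spend most of its parameter interval in a region where $f$ is large (of order $\tau$), because leaving a neighborhood of the level set $\Sigma_{c\tau}$ costs energy that scales like $\tau^{3/2}$ via the lower bound $|\nabla f|$ bounded below away from $K$ (here $\mathrm{Ric}>0$ on $M\setminus K$ forces $|\nabla f|\to 1$, or at least $|\nabla f|\ge c_0>0$, along the soliton). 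Consequently $2\int_0^{\sqrt\tau} f\circ\bar\gamma\,du$ is bounded below by a constant multiple of $\tau\cdot\sqrt\tau=\tau^{3/2}$. Plugging this into the lower bound for $\mathcal L(\gamma)$ and comparing with the upper bound $n\sqrt\tau$ yields, for $\tau\gg 1$, a contradiction unless $\alpha$ was not small — i.e. it pins down the value of the universal $\alpha$.

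More precisely, the mechanism I expect to use is: split $[0,\sqrt\tau]$ into the part $I_{\mathrm{near}}$ where $\bar\gamma(u)$ lies in a fixed large ball $B_g(o,R_0)$ (with $R_0$ chosen so this ball contains $K$ and $p'$ whenever $d<\alpha\tau$ is not yet known — actually $p'$ itself may be far, so rather split by the value of $f\circ\bar\gamma$) and the complementary part. On the part where $f\circ\bar\gamma\le\varepsilon\tau$, the curve stays in a compact set and its contribution to the energy plus the $f$-integral is easily estimated; on the complement $f\circ\bar\gamma>\varepsilon\tau$ the integrand $f\circ\bar\gamma$ is large. A cleaner route, which I would actually follow, is to use that any curve from $o$ to $p'$ either stays in the sublevel set $\{f\le\varepsilon\tau\}$ — but then its $g$-endpoint distance is bounded by the $g$-diameter of that sublevel set, which by \eqref{linear-f} is $O(\tau/c_1)$, giving $d=O(\tau)$ directly, which is too weak — or it exits, and the exit forces the $f$-integral term $2\int f\circ\bar\gamma$ to be $\gtrsim\tau^{3/2}$. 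Reconciling these, I believe the correct argument is the one in \cite{BCMZ}: one shows the minimal $\mathcal L$-geodesic from $o$ to an $\ell$-center necessarily passes through points with $f$ of order $\tau$, and the length accumulated is therefore at least $\alpha\tau$ for a universal $\alpha$.

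The main obstacle, as the authors flag in Remark \ref{remark} and Section 3, is that the curvature is only nonnegative \emph{outside} the compact set $K$, so Perelman's estimates and the monotonicity \eqref{R-monotonicity} behave well only away from $K$; near $K$ one has no sign on $\mathrm{Rm}$. Thus the lower bound $|\nabla f|\ge c_0$ and the linear behavior of $f$ are only available outside $K$, and one must check that the minimal $\mathcal L$-geodesic does not linger inside $K$ in a way that destroys the energy estimate — but since $K$ is a fixed compact set, any time spent in $K$ contributes only a bounded amount to the relevant integrals when $\tau\gg1$, so this is a controllable nuisance rather than a genuine difficulty. The genuinely delicate point is the quantitative lower bound on the $f$-integral along the geodesic, i.e. showing the geodesic must climb to $f\sim\tau$; this is where I would invoke \cite[Lemma 2.2]{BCMZ} essentially verbatim, adapted using the curvature decay estimates of Section 3 to handle the region near $K$.
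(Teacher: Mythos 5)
Your plan---to lower-bound $\mathcal L(\gamma)$ for the minimal $\mathcal L$-geodesic to $p'$ and compare against $\mathcal L(\gamma)=2\sqrt\tau\,\lambda(p',\tau)\le n\sqrt\tau$---is different from the paper's proof, which instead concatenates $\gamma$ with a reparametrized return $g$-geodesic from $p'$ back to $o$, uses the resulting loop as a test curve to upper-bound $\lambda(o,(1+\delta)^2\tau)$ by $\frac{{\rm dist}^2(o,p')}{2\delta\tau}+10\delta\tau$, and compares with $\lambda(o,\cdot)\ge\tau/3$ from Lemma~\ref{lambda-o}. Your direct route can be made to work, but as written there is a genuine gap, and it lies in the bookkeeping of the boundary term. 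The correct identity (this is exactly \eqref{L-length-equ} with a general endpoint) is
\begin{align*}
\mathcal L(\gamma) = \tfrac23\tau^{3/2} + \int_0^{\sqrt\tau}\Bigl(\tfrac12|\dot{\bar\gamma}|^2 + 2f\circ\bar\gamma\Bigr)\,du - 2\sqrt\tau\, f(p').
\end{align*}
You wrote $\tfrac43\tau^{3/2}$, invoked a spurious Cauchy--Schwarz on a ``leftover'' cross term (there is none: the entire cross term $-\int_0^{\sqrt\tau}2u\langle\dot{\bar\gamma},\nabla f\rangle\,du$ equals $-\int 2u(f\circ\bar\gamma)'\,du$ and is wholly absorbed by the integration by parts), and, most importantly, you dropped the boundary term $-2\sqrt\tau f(p')$. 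This term is itself of order $\tau^{3/2}$ once $f(p')\sim\tau$ and cancels the $\tfrac23\tau^{3/2}$ and $f$-integral contributions. Your heuristic ``the curve must climb to $f\sim\tau$, so $2\int f\circ\bar\gamma\gtrsim\tau^{3/2}$, hence $\mathcal L(\gamma)\gg n\sqrt\tau$, contradiction'' therefore proves too much---it would say there is no $\ell$-center at time $\tau$ at all. The dichotomy on $d$, the exit-time analysis near $K$, and the appeal to a lower bound on $|\nabla f|$ do not repair this; the obstruction is the boundary cancellation you omitted, not the compact set $K$.

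The fix is short and actually simpler than the paper's loop construction: drop the nonnegative integral to get $\mathcal L(\gamma)\ge\tfrac23\tau^{3/2}-2\sqrt\tau f(p')$, and use $f(p')\le{\rm dist}_g(o,p')$, which is free from $|\nabla f|\le1$ (via \eqref{scalar-equ} and $R>0$) and $f(o)=0$. If ${\rm dist}_g(o,p')<\alpha\tau$ with $\alpha<1/3$, then $\mathcal L(\gamma)\ge(\tfrac23-2\alpha)\tau^{3/2}>n\sqrt\tau$ for $\tau\gg1$, contradicting $\lambda(p',\tau)\le n/2$ and yielding a universal $\alpha$ close to $1/3$. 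But this is not the argument you wrote, and what you wrote does not close.
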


\begin{proof}
      Let $\gamma_1:[0, \tau] \rightarrow M$ be a minimizing  $\mathcal L$-geodesic from $o$ to $p'$, and  $\tilde{\gamma}_2:[\sqrt{\tau},(1+\delta) \sqrt{\tau}] \rightarrow M$ be a minimizing $g$-geodesic from $p'$ to $o$ such that $|\frac{d}{ds}\tilde{\gamma}_2|_g=\delta$.
     Set $\gamma_2:\left[\tau,(1+\delta)^2 \tau\right] \rightarrow M$ by $\gamma_2(s)=\tilde{\gamma}_2(\sqrt{s})$.
     Then $\gamma_1\cup\gamma_2$ is a   special loop at $o$ with parameter $s\in [0, (1+\delta)^2 \tau]$.
     By (\ref{L-length-stable}) and (\ref{L-length-u}),  we compute
\begin{align*}
&L\left(o,(1+\delta)^2 \tau\right) \\
& \leq \int_0^\tau \sqrt{s}\left(R+\left|\dot{\gamma}_1-\nabla f\right|^2\right) d s+\int_\tau^{(1+\delta)^2 \tau} \sqrt{s}\left(R+\left|\dot{\gamma}_2-\nabla f\right|^2\right) d s \\
&= L(p', \tau)+\int_{\sqrt{\tau}}^{(1+\delta) \sqrt{\tau}}\left(\frac{1}{2}\left|\dot{\tilde{\gamma}}_2\right|^2+2 u\left|\dot{\tilde{\gamma}}_2\right||\nabla f|+2 u^2\right) d u \\
& \leq L(p', \tau)+\int_{\sqrt{\tau}}^{(1+\delta) \sqrt{\tau}}\left(\left|\dot{\tilde{\gamma}}_2\right|^2+4 u^2\right) d u \\
& \leq L(p', \tau)+\frac{\operatorname{dist}^2(o, p')}{\delta \sqrt{\tau}}+4 \frac{(1+\delta)^3-1}{3} \tau^{3 / 2} \\
& \leq L(p', \tau)+\frac{\operatorname{dist}^2(o, p')}{\delta \sqrt{\tau}}+10 \delta \tau^{3 / 2} .
\end{align*}
Note that $\lambda(p',\tau)\leq \frac{n}{2}$.  Thus  for $\tau\gg1$, we get
\begin{align}\label{lambda-triangle}
    \lambda(o,(1+\delta)^2 \tau)&\leq \frac{1}{1+\delta}\lambda(p',\tau)+\frac{\operatorname{dist}^2(o, p')}{2\delta(1+\delta) \tau}+5\frac{\delta}{1+\delta}\tau\notag\\
    &\leq \frac{\operatorname{dist}^2(o, p')}{2\delta \tau} + 10\delta \tau.
\end{align}
By  Lemma \ref{lambda-o},  it follows
\begin{align*}
    \frac{(1+\delta)^2\tau}{3}\leq\lambda(o,(1+\delta)^2 \tau)\leq \frac{\operatorname{dist}^2(o, p')}{2\delta \tau} + 10\delta \tau.
\end{align*}
Hence by taking $\delta\ll 1$,  we obtain (\ref{distance-estimate}) for some small $\alpha$.
\end{proof}

By Lemma \ref{lambda-distance}, we give  the following    location  estimate of $\ell$-center   $p_\tau$ in the level set $\Sigma_\tau$.

  \begin{prop}\label{levelset-ell}
    For any $\tau\gg 1$, there is a $p_\tau\in\Sigma_\tau$, and $\tau_0\in[c\tau,C\tau]$ such that $\lambda(p_\tau,\tau_0)\leq A_0$, where $c,C$ and $A_0$ are uniform constants.
\end{prop}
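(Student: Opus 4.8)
The plan is to show that among the $\ell$-centers of the backward Ricci flow at various times $\tau_0$ comparable to $\tau$, at least one of them lands on the level set $\Sigma_\tau$, by a connectedness/continuity argument combined with the location estimate of Lemma \ref{lambda-distance}. More precisely, for fixed large $\tau$ I would first use the known fact (from \cite[Section 7.1]{P}) that at each time $\sigma>0$ there exists an $\ell$-center $x_\sigma$ with $\ell(x_\sigma,\sigma)\le n/2$, equivalently a point $p_\sigma=\phi_{-\sigma}(x_\sigma)$ with $\lambda(p_\sigma,\sigma)\le n/2$. By Lemma \ref{lambda-distance}, such a $p_\sigma$ satisfies $\mathrm{dist}_g(o,p_\sigma)\ge \alpha\sigma$; and by the assumed linear growth \eqref{linear-f} together with the normalization $f(o)=0$, $|\nabla f|\le 1$ from \eqref{scalar-equ}, one also gets $f(p_\sigma)\le \mathrm{dist}_g(o,p_\sigma)$ and, via \eqref{linear-f}, a lower bound $f(p_\sigma)\ge c_1\rho(p_\sigma)\ge c_1'\,\mathrm{dist}_g(o,p_\sigma)/c$ up to adjusting constants. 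So an $\ell$-center at time $\sigma$ sits in a region where $f$ is comparable to $\sigma$: there are uniform constants $a,b>0$ with $a\sigma\le f(p_\sigma)\le b\sigma$.

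Now I would run $\sigma$ over the interval $[\tau/b,\ \tau/a]$ and consider the (super-level / sub-level) structure of $f$. At $\sigma=\tau/b$ every $\ell$-center $p_\sigma$ satisfies $f(p_\sigma)\le b\sigma=\tau$, and at $\sigma=\tau/a$ every $\ell$-center satisfies $f(p_\sigma)\ge a\sigma=\tau$. The reduced distance $\ell(\cdot,\cdot)$, hence $\lambda(\cdot,\cdot)$, is continuous (indeed locally Lipschitz) in both variables, and for each $\sigma$ the set $\{p:\lambda(p,\sigma)\le n/2\}$ is nonempty; in fact one can take a continuous-in-$\sigma$ selection by choosing $p_\sigma$ to minimize $\lambda(\cdot,\sigma)$ — the minimum value $m(\sigma)=\min_p \lambda(p,\sigma)$ is $\le n/2$ and the argmin moves continuously after passing to the reachable compact region (each sublevel $\{\lambda(\cdot,\sigma)\le n\}$ lies in a fixed compact ball by the distance bounds above and completeness of $M$). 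Along such a continuous path $\sigma\mapsto p_\sigma$, the continuous function $\sigma\mapsto f(p_\sigma)$ takes a value $\le\tau$ at $\sigma=\tau/b$ and a value $\ge\tau$ at $\sigma=\tau/a$, so by the intermediate value theorem there is $\tau_0\in[\tau/b,\tau/a]$ with $f(p_{\tau_0})=\tau$, i.e.\ $p_{\tau_0}\in\Sigma_\tau$. Setting $c=1/b$, $C=1/a$, and $A_0=n/2$ (or any fixed constant $\ge n/2$) gives the statement, since $\lambda(p_{\tau_0},\tau_0)\le n/2\le A_0$.

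The main obstacle I expect is making the "continuous selection of an $\ell$-center" rigorous: a priori the set of minimizers of $\lambda(\cdot,\sigma)$ could jump, so one has to argue carefully — either by a direct IVT-type argument on the sets $U^-_\sigma=\{p:\ f(p)\le\tau,\ \lambda(p,\sigma)\le n/2\}$ and $U^+_\sigma=\{p:\ f(p)\ge\tau,\ \lambda(p,\sigma)\le n/2\}$, using that $\{\lambda(\cdot,\sigma)\le n/2\}$ is a nonempty connected-enough set that cannot avoid $\Sigma_\tau$ once it meets both sides — or, more robustly, by noting that the sublevel set $\{\lambda(\cdot,\sigma)\le n/2\}$ is contained in the $\mathcal L$-geodesic "light cone" from $o$, which connects $o$ to $p_\sigma$, and $f$ restricted to a minimizing $\mathcal L$-geodesic from $o$ is a continuous function starting at $0$ and ending at $f(p_\sigma)$; choosing $\sigma$ so that $f(p_\sigma)$ straddles $\tau$ and then sliding along that single geodesic hits $\Sigma_\tau$ at a point whose $\lambda$ we can still control by the monotonicity/interpolation inequalities of Lemma \ref{ell-differential-equation}. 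The quantitative input that keeps everything uniform is exactly Lemma \ref{lambda-o}, Lemma \ref{lambda-distance} and \eqref{linear-f}, which pin the location of any $\ell$-center to an annulus of radius $\sim\sigma$ and hence to a slab $a\sigma\le f\le b\sigma$; the rest is soft topology plus the standard fact that $\ell$-centers exist.
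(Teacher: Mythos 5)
Your primary route (a continuous selection of $\ell$-centers as $\sigma$ varies, plus the intermediate value theorem) does not go through, and you correctly flag this yourself: the argmin set of $\lambda(\cdot,\sigma)$ can jump, and nothing forces the set $\{\lambda(\cdot,\sigma)\le n/2\}$ to intersect $\Sigma_\tau$ for some $\sigma$ just because it lies on one side of $\Sigma_\tau$ for small $\sigma$ and the other side for large $\sigma$. Also note that your stated upper bound $f(p_\sigma)\le b\sigma$ for $\ell$-centers is not actually justified by what you wrote: $|\nabla f|\le 1$ gives $f(p_\sigma)\le\mathrm{dist}_g(o,p_\sigma)$, but you supplied no upper bound for $\mathrm{dist}_g(o,p_\sigma)$ (one does exist, but it needs the $\mathcal L$-length formula and a Cauchy--Schwarz estimate, not just the gradient bound).

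Your fallback — take a single minimizing $\mathcal L$-geodesic $\gamma:[0,\sigma]\to M$ from $o$ to an $\ell$-center $p$, use that $f$ is continuous along it with $f(\gamma(0))=0$ and $f(\gamma(\sigma))\ge\tau$, and take the last exit time $\tau_0$ from $\{f\le\tau\}$ — is exactly the paper's construction (with $\sigma=\tau/(c_1\alpha)$, which by Lemma \ref{lambda-distance} and \eqref{linear-f} already forces $f(p)\ge\tau$, so no straddling is needed). But you have not supplied the step on which the whole estimate hinges. Since $\gamma|_{[0,\tau_0]}$ is still a minimizing $\mathcal L$-geodesic to $p_\tau:=\gamma(\tau_0)$ and the integrand is nonnegative, one has
\begin{align*}
\lambda(p_\tau,\tau_0)=\frac{L(p_\tau,\tau_0)}{2\sqrt{\tau_0}}\le\frac{L(p,\sigma)}{2\sqrt{\tau_0}}=\frac{\sqrt{\sigma}}{\sqrt{\tau_0}}\,\lambda(p,\sigma),
\end{align*}
so the bound $\lambda(p_\tau,\tau_0)\le A_0$ degenerates unless one proves a lower bound $\tau_0\ge c\,\sigma$. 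This is not a consequence of the differential inequalities of Lemma \ref{ell-differential-equation}, which you cite; it requires a separate quantitative argument. The paper gets it by combining $c_1\tau=c_1 f(p_\tau)\le\mathrm{dist}_g(o,p_\tau)\le\int_0^{\sqrt{\tau_0}}|\dot{\tilde\gamma}|\,du$ with the Cauchy--Schwarz inequality and the $\mathcal L$-length identity \eqref{L-length-u} (which bounds $\int_0^{\sqrt{\tau_0}}\tfrac12|\dot{\tilde\gamma}|^2\,du$ by $L(p,\sigma)+2\sqrt{\tau_0}\,\tau$), yielding $\tau_0\ge\frac{c_1^2}{8}\tau$. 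Without this lower bound your argument does not produce a uniform $A_0$, so the proposal is incomplete at its crucial point.
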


\begin{proof} Let  $c_1$ be the constant  in \eqref{linear-f}.
    Let $\gamma: [0, \frac{\tau}{ c_1\alpha}] \rightarrow M$ be a minimizing $\mathcal L$-geodesic from $o$ to $p=\gamma({\frac{\tau}{ c_1\alpha}})$ such that
    $$\lambda(p, \frac{\tau}{ c_1\alpha})\le  \frac{n}{2}.$$
        Then by Lemma \ref{lambda-distance},  we see that
     $$f(p)\geq c_1\operatorname{dist}_g(p, o) \geq \tau.$$
       Set
$$
\tau_0:=\sup \{s \in[0, \frac{\tau}{ c_1\alpha}]: f(\gamma(s)) \leq \tau\}, \quad p_\tau:=\gamma\left(\tau_0\right) .
$$
Thus  $\tau_0\leq \frac{\tau}{c_1\alpha}=C\tau$. We need to estimate the lower bound of $\tau_0$.

  Define $\tilde{\gamma}:[0, \sqrt{\frac{\tau}{ c_1\alpha}}] \rightarrow M$ by $\tilde{\gamma}(u)=\gamma\left(u^2\right)$. Then as  is \eqref{L-length-equ}, we have
   \begin{align*}
      L(p, \frac{\tau}{ c_1\alpha})&\geq \int_0^{\sqrt{\tau_0}} \left (\frac{1}{2}|\dot{\tilde{\gamma}}|^2-2 u(f \circ \tilde{\gamma}-f(o))^{\prime}  \right) du\\
      &= \int_0^{\sqrt{\tau_0}}\left(\frac{1}{2}|\dot{\tilde{\gamma}}|^2-2u\frac{d}{du}f(\tilde{\gamma}(u)) \right)du\\
      &= \int_0^{\sqrt{\tau_0}}\left(\frac{1}{2}|\dot{\tilde{\gamma}}|^2+2f(\tilde{\gamma}(u))\right)du -2\sqrt{\tau_0}\tau\\
      &\geq \int_0^{\sqrt{\tau_0}}\frac{1}{2}|\dot{\tilde{\gamma}}|^2 du -2\sqrt{\tau_0}\tau.
  \end{align*}
   It follows
   \begin{align*}
       \frac{1}{2} \int_0^{\sqrt{\tau_0}}|\dot{\tilde{\gamma}}|^2  \leq L(p, \frac{\tau}{ c_1\alpha})+2\sqrt{\tau_0}\tau.
   \end{align*}
Consequently,
$$\frac{1}{2} \int_0^{\sqrt{\tau_0}}|\dot{\tilde{\gamma}}|^2  \leq n \sqrt{\frac{\tau}{ c_1\alpha}}+2\sqrt{\tau_0}\tau .$$
Thus
\begin{align*}
&\frac{c_1^2}{2} \tau^2  = \frac{c_1^2}{2} f^2(p_\tau)\\
& \leq\frac{1}{2} \operatorname{dist}\left(o, p_\tau\right)^2 \leq \frac{1}{2}\left(\int_0^{\sqrt{\tau_0}}|\dot{\tilde{\gamma}}|\right)^2 \leq \frac{1}{2} \sqrt{\tau_0} \int_0^{\sqrt{\tau_0}}|\dot{\tilde{\gamma}}|^2 \\
& \leq n \sqrt{\frac{\tau_0\tau}{ c_1\alpha}}+2\tau_0\tau \leq \frac{c_1^2}{4} \tau^2+2\tau_0\tau.
\end{align*}
This proves
 \begin{align}\label{lowe-bound}
 \tau_0 \geq  \frac{c_1^2}{8} \tau=c\tau
 \end{align}
  for some dimensional constant $c>0$.

  Note  that $\gamma: [0, \tau_0] \rightarrow M$ is also  a minimizing $\mathcal L$-geodesic from $o$ to $p_\tau$.
  Thus, by  (\ref{lowe-bound}),  we  get
$$
\lambda\left(p_\tau, \tau_0\right) \leq \frac{\sqrt{\tau / c_1\alpha}}{\sqrt{\tau_0}} \lambda(p, \frac{\tau}{ c_1\alpha}) \leq \frac{n}{2 \sqrt{cc_1 \alpha}}=A_0.
$$
\end{proof}

\section{Curvature estimates for $\ell$-center}

In this section, we give the  curvature estimates  for $\ell$-center $p_r$ determined in Proposition \ref{levelset-ell}.

Let $\gamma(s): [0, \tau_0] \to M$  be a  minimal $\mathcal {L}$-geodesic with $\gamma(0) = o$ and $\gamma(\tau_0) = p_\tau$, where $p_\tau$ is an  $\ell $-center as in Proposition \ref{levelset-ell}.  Since $p_\tau \in \Sigma_\tau$,   we see that  $\gamma(s)\cap  K \neq \emptyset$ when $\tau\gg 1$.  Set
\begin{align}\label{tau'}
    \tau' = \inf\{ t | ~\gamma(s)\in M\setminus  K, \forall s>t\}
\end{align}
and $o_\tau' = \gamma(\tau')$.  Then the restricting  $\gamma(s)$  on $[\tau', \tau_0]$, denoted by  $\gamma_2(s)$,  is a minimal $\mathcal {L}$-geodesic between $o_\tau'$ and $p_\tau$.   Thus  it satisfies that ${\rm Rm} \geq 0$, in particular  ${\rm Ric}\ge 0$ on   $\gamma_2(s)$.
As in (\ref{lamda-function}),  we denote   a $\lambda$-function  starting from  $o_\tau'$  by  $\lambda_{o_\tau'}$.

\begin{lem}\label{ell-compact-set}
    Let $\tau'$ and $\lambda_{o_\tau'}$ defined as above for  $\tau'\gg 1$.  Then
    \begin{align}\label{l-bound}
    \frac{\tau'}{\tau_0}\to 0, ~{\rm as}~  \tau_0 \to \infty.
    \end{align}
     and $\lambda_{o_\tau'}(p_\tau, \tau_0-\tau')\leq C_0 $ for some uniform constant $C_0>0$.
\end{lem}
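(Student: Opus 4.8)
The plan is to base everything on two observations. The first is that the base point $o_\tau'=\gamma(\tau')$ lies in $\partial K$: by construction $\tau'$ is the largest parameter value at which the minimal $\mathcal{L}$-geodesic $\gamma$ meets the closed set $K$ (such a value exists for $\tau\gg1$, as noted above), so $\gamma(\tau')\in K$ while $\gamma(s)\in M\setminus K$ for $s>\tau'$, forcing $\gamma(\tau')\in\partial K$; since $\partial K$ is compact, $f(o_\tau')\le c_2\max_{\partial K}\rho=:M_K$, with $M_K$ depending only on $K$ and $(M,g)$. The second is that the $\mathcal{L}$-length of a soliton curve is monotone in the length of its parameter interval, since by \eqref{scalar-equ} and $R>0$ the integrand $\sqrt{s}\,(R+|\dot\gamma-\nabla f|^2)$ is positive; in particular $\mathcal{L}(\gamma|_{[0,\tau']})\le\mathcal{L}(\gamma)$, and, as $\gamma$ is a minimal $\mathcal{L}$-geodesic from $o$ to the $\ell$-center $p_\tau$ of Proposition \ref{levelset-ell}, $\mathcal{L}(\gamma)=2\sqrt{\tau_0}\,\lambda(p_\tau,\tau_0)\le 2A_0\sqrt{\tau_0}$.

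For \eqref{l-bound} I would repeat the computation of Lemma \ref{lambda-o} on the sub-curve $\gamma|_{[0,\tau']}$ (substitute $u=\sqrt s$ with $\bar\gamma(u)=\gamma(u^2)$, expand using \eqref{scalar-equ}, integrate the cross term by parts, and use $f(o)=0$ and $f\ge0$), which yields
\begin{align*}
\mathcal{L}(\gamma|_{[0,\tau']})=\frac23(\tau')^{3/2}+\int_0^{\sqrt{\tau'}}\Big(\tfrac12|\dot{\bar\gamma}|^2+2f(\bar\gamma(u))\Big)\,du-2\sqrt{\tau'}\,f(o_\tau')\ \ge\ \frac23(\tau')^{3/2}-2M_K\sqrt{\tau'}.
\end{align*}
Together with $\mathcal{L}(\gamma|_{[0,\tau']})\le\mathcal{L}(\gamma)\le 2A_0\sqrt{\tau_0}$ this gives $\frac23(\tau')^{3/2}-2M_K\sqrt{\tau'}\le 2A_0\sqrt{\tau_0}$; hence for $\tau'\ge 6M_K$, where the left side is at least $\frac13(\tau')^{3/2}$, we obtain $\tau'\le(6A_0)^{2/3}\tau_0^{1/3}$ and therefore $\tau'/\tau_0\le(6A_0)^{2/3}\tau_0^{-2/3}\to0$ as $\tau\to\infty$ (note $\tau_0\ge c\tau\to\infty$ by Proposition \ref{levelset-ell}); if instead $\tau'$ remains bounded the assertion is trivial.

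For the second assertion I would re-base the tail of $\gamma$ at $o_\tau'$: set $\bar\gamma_2(s):=\gamma(s+\tau')$ for $s\in[0,\tau_0-\tau']$, a soliton curve from $o_\tau'$ to $p_\tau$. Using $\sqrt{s-\tau'}\le\sqrt{s}$ and positivity of the integrand,
\begin{align*}
\mathcal{L}_{o_\tau'}(\bar\gamma_2)=\int_{\tau'}^{\tau_0}\sqrt{s-\tau'}\,\big(R+|\dot\gamma-\nabla f|^2\big)(\gamma(s))\,ds\ \le\ \int_{\tau'}^{\tau_0}\sqrt{s}\,\big(R+|\dot\gamma-\nabla f|^2\big)(\gamma(s))\,ds\ \le\ \mathcal{L}(\gamma)\le 2A_0\sqrt{\tau_0},
\end{align*}
so $\lambda_{o_\tau'}(p_\tau,\tau_0-\tau')\le\frac{1}{2\sqrt{\tau_0-\tau'}}\,\mathcal{L}_{o_\tau'}(\bar\gamma_2)\le A_0\sqrt{\tau_0/(\tau_0-\tau')}$, and by \eqref{l-bound} we have $\tau'/\tau_0\le\frac12$ for $\tau\gg1$, giving $\lambda_{o_\tau'}(p_\tau,\tau_0-\tau')\le\sqrt2\,A_0=:C_0$. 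The only step that is not purely formal is the identification $o_\tau'\in\partial K$, which controls the boundary term $2\sqrt{\tau'}f(o_\tau')$ coming from the integration by parts; I expect this to be the main point requiring care, the remainder being bookkeeping with the $\mathcal{L}$-functional of the soliton Ricci flow.
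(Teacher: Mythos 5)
Your proof is correct and follows the paper's main strategy (decompose the minimal $\mathcal L$-geodesic at the last exit point $o_\tau'\in\partial K$, exploit that $f(o_\tau')$ is bounded, and integrate by parts using $R+|\nabla f|^2=1$), but the bookkeeping is executed somewhat differently and arguably more cleanly. For \eqref{l-bound} the paper argues by contradiction (assuming $\tau'\ge\epsilon\tau_0$ and deriving $\mathcal L(\gamma_1)\gtrsim\tau_0^{3/2}$), whereas you prove it directly and extract the explicit polynomial rate $\tau'\lesssim\tau_0^{1/3}$, which is a small bonus. For the bound on $\lambda_{o_\tau'}(p_\tau,\tau_0-\tau')$ you make the re-basing step $\bar\gamma_2(\bar s)=\gamma(\bar s+\tau')$ explicit and observe $\mathcal L_{o_\tau'}(\bar\gamma_2)\le\mathcal L(\gamma_2)\le\mathcal L(\gamma)$ from the monotonicity $\sqrt{s-\tau'}\le\sqrt s$ together with nonnegativity of the integrand; this bypasses the paper's re-use of the $\mathcal L(\gamma_1)$ lower bound and also makes transparent the reparameterization that the paper's line ``$\lambda_{o_\tau'}(p_\tau,\tau_0-\tau')=\tfrac{1}{2\sqrt{\tau_0-\tau'}}\mathcal L(\gamma_2)$'' glosses over (that equality should in fact be an inequality, precisely because of the time shift you spell out). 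So: same decomposition idea, same use of $o_\tau'\in\partial K$, but your variant is direct, quantitative, and more careful about the re-based $\mathcal L$-length.
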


\begin{proof}We first prove the first assertion  (\ref{l-bound}).
    Suppose that  (\ref{l-bound})    fails.  Then  there exists a constant $\epsilon>0$ such that
    \begin{align}\label{tau'-tau0}
        \tau'\geq \epsilon \tau_0.
    \end{align}
   Since $\lambda(p_\tau, \tau_0)\leq A_0$, by \eqref{L-length-stable}, we have
    \begin{align*}
        L(p_\tau, \tau_0) &= \int_0^{\tau_0} \sqrt{s}\left(R_{g}+|\dot{\gamma}(s)-\nabla f|_{g}^2\right)(\gamma(s)) d s \leq 2A_0\sqrt{\tau_0}.
    \end{align*}
    By \eqref{scalar-equ},  it follows
    \begin{align}\label{L-ell-bound}
        L(p_\tau, \tau_0) &= \int_0^{\sqrt{\tau_0}} \left(2u^2R+\frac{1}{2}|\dot{\bar\gamma}-2u\nabla f|^2\right) d u\notag\\
        &= \int_0^{\sqrt{\tau_0}} \left( 2u^2 + \frac{1}{2}|\dot{\bar \gamma}|^2 - 2u\frac{d}{du} f(\bar \gamma(u)) \right)du\notag\\
        &= \int_0^{\sqrt{\tau_0}} \left( 2u^2 + \frac{1}{2}|\dot{\bar \gamma}|^2 + 2f(\bar \gamma(u))\right)du - 2\sqrt{\tau_0}f(p_\tau)\notag\\
        &\leq 2A_0\sqrt{\tau_0}.
    \end{align}

    We  divide $\gamma(s)$ into two paths  $\gamma_1$ and $\gamma_2$ such that
    $\gamma_1:[0, \tau']$ with $\gamma_1(0)=o$ and $\gamma_1(\tau')=\sigma_{o_{\tau'}}$,
    and  $\gamma_2:[\tau', \tau_0]$ with $\gamma_2(\tau')=o_{\tau'}$ and $\gamma_2(\tau_0)=p_{\tau}$.
   Then
    \begin{align*}
        L(p_\tau, \tau_0) &= \mathcal{L}(\gamma_1) + \mathcal{L}(\gamma_2),
    \end{align*}
    where
    \begin{align}\label{gamma-1}
        \mathcal{L}(\gamma_1) = \int_0^{\sqrt{\tau'}} \left( 2u^2 + \frac{1}{2}|\dot{\bar \gamma}|^2 + 2f(\bar \gamma(u))\right)du - 2\sqrt{\tau'}f(o_\tau'),
    \end{align}
    and
    \begin{align*}
        \mathcal{L}(\gamma_2) = \int_{\sqrt{\tau'}}^{\sqrt{\tau_0}} \left( 2u^2 + \frac{1}{2}|\dot{\bar \gamma}|^2 + 2f(\bar \gamma(u))\right)du +  2\sqrt{\tau'}f(o_\tau') - 2\sqrt{\tau_0}f(p_\tau).
    \end{align*}
     Since $o_\tau' $ lies in $\partial  K$,  there exists a uniform constant $C_2>0$, such that $f(o_\tau')\leq C_2 $. Thus by \eqref{L-ell-bound},  we   obtain
    \begin{align}\label{gamma2-bound}
        \mathcal{L}(\gamma_2) &\leq 2A_0\sqrt{\tau_0}-\mathcal{L}(\gamma_1)\notag\\
        & = 2A_0\sqrt{\tau_0}- \int_0^{\sqrt{\tau'}} \left( 2u^2 + \frac{1}{2}|\dot{\bar \gamma}|^2 + 2f(\bar \gamma(u))\right)du + 2\sqrt{\tau'}f(o_\tau')\notag\\
        &\leq 2(A_0+C_2)\sqrt{\tau_0}.
    \end{align}

 By (\ref{gamma-1}) and \eqref{tau'-tau0}, we also have
    \begin{align}\label{gamma1-lower-bound}
        \mathcal{L}(\gamma_1) &\geq \int_0^{\sqrt{\tau'}} 2u^2 - 2C_2\sqrt{\tau_0} \notag\\
        &\geq \frac{2}{3} \tau'^{\frac{3}{2}} - 2C_2\sqrt{\tau_0}\notag\\
        &\geq \frac{2}{3} \epsilon^{\frac{3}{2}} \tau_0^{\frac{3}{2}}- 2C_2\sqrt{\tau_0}.
    \end{align}
    Note that
    \begin{align}\label{gamma1-upper-bound}
        \mathcal{L}(\gamma_1) \leq L(p_\tau, \tau_0)  \leq 2A_0\sqrt{\tau_0}.
    \end{align}
   Clearly,  (\ref{gamma1-lower-bound}) is a contradiction with  \eqref{gamma1-upper-bound} when $\tau_0 \gg 1$. Thus (\ref{l-bound}) must be true.

Let  $\sigma(s)$ be  a minimal $\mathcal{L}$-geodesic from $(o_\tau', 0)$ to $(p_\tau, \tau_0-\tau')$. Note that $\gamma_2(\cdot)$ is also a curve from $o_\tau'$ to $p_\tau$.  Then
    \begin{align*}
        \lambda_{o_\tau'}(p_\tau, \tau_0-\tau') &= \frac{1}{2\sqrt{\tau_0-\tau'}}\int_0^{\tau_0-\tau'}\sqrt{s}\left(R_{g}+|\dot{\gamma}(s)-\nabla f|_{g}^2\right)(\sigma(s)) d s\notag\\
        & = \frac{1}{2\sqrt{\tau_0-\tau'}}\int_{\tau'}^{\tau_0}\sqrt{s-\tau'}\left(R_{g}+|\dot{\gamma}(s-\tau')-\nabla f|_{g}^2\right)(\sigma(s-\tau')) d s \notag\\
        &\leq \frac{1}{2\sqrt{\tau_0-\tau'}}\int_{\tau'}^{\tau_0}\sqrt{s-\tau'}\left(R_{g}+|\dot{\gamma}_2(s)-\nabla f|_{g}^2\right)(\gamma_2(s)) d s \notag\\
        & \leq \frac{1}{2\sqrt{\tau_0-\tau'}}\int_{\tau'}^{\tau_0}\sqrt{s}\left(R_{g}+|\dot{\gamma}_2(s)-\nabla f|_{g}^2\right)(\gamma_2(s)) d s \notag\\
        &=\frac{1}{2\sqrt{\tau_0-\tau'}}\mathcal{L}(\gamma_2).
    \end{align*}
    Thus by  (\ref{gamma2-bound}) and  (\ref{l-bound}), it follows
       $$\lambda_{o_\tau'}(p_\tau, \tau_0-\tau') \leq 2(A_0+C_2)$$
    as long as $\tau_0\gg 1$.  Hence,   the proof of  lemma is complete.
\end{proof}

By Lemma \ref{ell-compact-set}, we know that  for any $\tau\gg 1$ there exists a $o_\tau'\in\partial  K$ and $\tau_0' = \tau_0-\tau'$, such that
\begin{align}\label{lambda-o-p}
    \lambda_{o_\tau'}(p_\tau, \tau_0')\leq C_0,
\end{align}
and
\begin{align}\label{tau0'}
    \frac{c}{2}\tau \leq \tau_0' \leq 2C\tau.
\end{align}

  By (\ref{lambda-o-p}) and (\ref{tau0'}),  we use the Perelman's argument \cite[Section 7]{P}  to derive the  upper bound  estimate of curvature   for the $\ell$-center $p_\tau$ in the following.   Namely, we prove

\begin{lem}\label{upper-boundofR} Let $p_r$  be the  $\ell$-center  determined in Proposition \ref{levelset-ell}.   Then there exists a uniform constant $C_0$ such that  for any $\tau\gg 1$ it holds
    \begin{align}\label{ell-center-curvature}
        R(p_\tau)\leq \frac{C_0}{\tau}.
    \end{align}
\end{lem}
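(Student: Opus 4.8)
The plan is to run Perelman's curvature estimate from \cite[Section 7]{P} along the minimal $\mathcal L$-geodesic that lands at $p_\tau$, but with base point moved to the point $o_\tau'\in\partial K$ produced in Lemma \ref{ell-compact-set}, so that the geodesic stays inside $M\setminus K$ where ${\rm Rm}\ge 0$ and Hamilton's trace Li--Yau--Harnack inequality is available. All the input is already assembled: by \eqref{lambda-o-p}--\eqref{tau0'} we have $\lambda_{o_\tau'}(p_\tau,\tau_0')\le C_0$ with $\tfrac c2\tau\le\tau_0'=\tau_0-\tau'\le 2C\tau$, and the associated minimal $\mathcal L$-geodesic $\gamma_2:[0,\tau_0']\to M\setminus K$ runs from $o_\tau'$ to $p_\tau$. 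Passing to the backward Ricci flow $\hat g$, the soliton isometries identify the scalar curvature at the endpoint of this $\mathcal L$-geodesic with $R_g(p_\tau)=:R(p_\tau)$, and the curvature operator along it with ${\rm Rm}_g$ at the corresponding points of $M\setminus K$, hence it is nonnegative.

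The core step is to write identity \eqref{ell-harnack} of Lemma \ref{first-order-variation} for the pair $(p_\tau,\tau_0')$ based at $o_\tau'$:
\[
(\tau_0')^{3/2}\bigl(R(p_\tau)+|Y(\tau_0')|^2\bigr)=-K(p_\tau,\tau_0')+\tfrac12 L_{o_\tau'}(p_\tau,\tau_0').
\]
On the left $|Y(\tau_0')|^2\ge 0$; on the right $\tfrac12 L_{o_\tau'}(p_\tau,\tau_0')=\sqrt{\tau_0'}\,\lambda_{o_\tau'}(p_\tau,\tau_0')\le C_0\sqrt{\tau_0'}$. Thus everything reduces to a lower bound $K(p_\tau,\tau_0')\ge-{\rm const}\cdot(\tau_0')^{3/2}$.

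For this I would feed the vector field $V=-Y$ into Hamilton's Harnack expression along $\gamma_2$: the first three terms of the integrand in \eqref{K-formula}, namely $-\partial_\tau R-2\langle Y,\nabla R\rangle+2{\rm Ric}(Y,Y)$, are then nonnegative, so the integrand is bounded below by $-R/s$. Hence
\[
K(p_\tau,\tau_0') \ge -\int_0^{\tau_0'}\sqrt{s}\,R(\gamma_2(s))\,ds \ge -L_{o_\tau'}(p_\tau,\tau_0') \ge -2C_0\sqrt{\tau_0'},
\]
where the middle inequality uses $2u^2R\le 2u^2R+\tfrac12|\dot{\bar\gamma}-2u\nabla f|^2$ in \eqref{L-length-u}. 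Combining the two displays gives $(\tau_0')^{3/2}R(p_\tau)\le 3C_0\sqrt{\tau_0'}$, so $R(p_\tau)\le 3C_0/\tau_0'\le 6C_0c^{-1}/\tau$ by \eqref{tau0'}, which is \eqref{ell-center-curvature} after relabelling the constant.

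The delicate point --- and the reason the base point had to be moved onto $\partial K$ in Lemma \ref{ell-compact-set} --- is the applicability of Hamilton's trace Harnack, which is classically proved assuming ${\rm Rm}\ge 0$ on all of $M$ (here it holds only on $M\setminus K$). Making this rigorous requires checking that the minimal $\mathcal L$-geodesic for the pair $(p_\tau,\tau_0')$ indeed avoids $K$ --- which is where \eqref{l-bound}, i.e. $\tau'/\tau_0\to 0$, is used, since then the geodesic spends asymptotically all of its $\mathcal L$-length outside $K$ --- and that a Harnack-type inequality is valid along such a geodesic: either through a localized Harnack, or by replacing $-\partial_\tau R$ with the steady-soliton identity $\partial_tR=2{\rm Ric}(\nabla f,\nabla f)\ge 0$ on $M\setminus K$ and controlling the cross term $-2\langle Y,\nabla R\rangle$ via a Shi-type bound $|\nabla R|\le CR^{3/2}$ together with ${\rm Ric}\ge 0$ and the curvature decay of this section. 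This bookkeeping near $\partial K$ is where I expect the real work to lie.
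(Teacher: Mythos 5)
Your overall plan matches the paper's: rebase the $\ell$-geodesic at $o_\tau'\in\partial K$ via Lemma~\ref{ell-compact-set}, use $\lambda_{o_\tau'}(p_\tau,\tau_0')\le C_0$ and $\tau_0'\asymp\tau$, then bound the scalar curvature at the endpoint via the $K$-integral in \eqref{ell-harnack}/\eqref{K-formula}, and the arithmetic you carry out from there is fine. The gap is in the one step you yourself flag as ``where the real work lies,'' namely the lower bound on $K$ coming from the nonnegativity of the Harnack-type integrand $H(Y)=-\partial_s R-2\langle Y,\nabla R\rangle+2\operatorname{Ric}(Y,Y)$. Invoking Hamilton's trace Harnack is not justified here (it is a global result requiring $\operatorname{Rm}\ge 0$ on all of $M$), and the fallback you sketch --- keep the steady-soliton identity for $-\partial_\tau R$ but control $-2\langle Y,\nabla R\rangle$ by a Shi-type bound $|\nabla R|\lesssim R^{3/2}$ --- would not close cleanly: the cross term is of the same order as the good term and a Shi bound gives only a one-sided estimate with a multiplicative error, not pointwise sign control.

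What makes the paper's argument work is an exact algebraic identity, not a localized Harnack or a derivative estimate. Pulling $H(Y)$ back to the fixed metric $g$ via the soliton isometries, one uses the two steady-soliton relations $\partial_\tau R=\langle\nabla R,\nabla f\rangle=-2\operatorname{Ric}(\nabla f,\nabla f)$ (equation \eqref{R-monotonicity}) and, from the contracted second Bianchi identity on a steady soliton, $\nabla R=-2\operatorname{Ric}(\nabla f,\cdot)$ so that $-2\langle W,\nabla R\rangle=4\operatorname{Ric}(W,\nabla f)$. Substituting both, the three terms assemble into a perfect square:
\begin{align*}
H(Y)(x,\hat\tau)=2\operatorname{Ric}_g\bigl(\phi_{(-\hat\tau)^*}Y+\nabla f,\ \phi_{(-\hat\tau)^*}Y+\nabla f\bigr)(\phi_{-\hat\tau}(x)).
\end{align*}
This is pointwise nonnegative along $\gamma_2\subset M\setminus K$ because $\operatorname{Ric}\ge 0$ there, and that is all that is needed; no Harnack inequality (global or local) and no Shi estimate enter. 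Once you have this, the remaining lines of your proposal (bounding the $\int\sqrt{s}\,R$ piece by $L$, then dividing by $(\tau_0')^{3/2}$ and using $\tau_0'\ge\tfrac{c}{2}\tau$) are exactly the paper's computation, modulo that the paper phrases it through the gradient identity \eqref{grad-ell} for $|\nabla\lambda_{o_\tau'}|^2$ rather than directly through \eqref{ell-harnack}. So: right scaffolding, and you correctly identified the missing piece, but the missing piece is an exact soliton identity for the cross term, not bookkeeping near $\partial K$.
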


\begin{proof}
Let $\Gamma(s): [0, \tau_0]$ be a minimal $\mathcal{L}$-geodesic with $ \Gamma(\tau_0) =\phi_{\tau_0}(p_\tau)=x_{\tau_0}$ and $ \Gamma(0) =o$.  Let $Y(s) = \frac{d\Gamma}{ds}$ and $\ell$ the corresponding reduced length.  Then by   \eqref{grad-ell} and \eqref{ell-harnack} in Lemma \ref{first-order-variation} (also see \cite[Section 25]{KL}),   for $x=\Gamma(\hat\tau)$  with  $\hat\tau\in [0,\tau_0]$,  we have
\begin{align}\label{gradient-ell-curvature}
    4\hat\tau |\nabla \ell|^2(x,\hat\tau) &= -4\tau R(x,-\hat\tau) + 4\ell(x,\hat\tau) -\frac{4}{\sqrt{\hat\tau}} \int_0^{\hat\tau} s^{\frac{3}{2}} H(Y(\Gamma(s))) ds\notag\\
    &+ \frac{4}{\sqrt{\hat\tau}}\int_0^{\hat\tau} \sqrt{s} R(\Gamma(s),-s) ds,
\end{align}
where
\begin{align}
    H(Y) = -\frac{\partial R}{\partial s}-2\langle Y, \nabla R\rangle+2 \operatorname{Ric}(Y, Y).\notag
\end{align}

By the isometry,
$(  \phi_{(-\hat\tau)^*}(Y), g = g(0))$ = $( Y,  g(-\hat\tau))$,  we see
\begin{align*}
    R_{\hat\tau}(x, -\hat\tau) = \frac{\partial R}{\partial\hat\tau}(x, -\hat\tau) = \langle \nabla R, \nabla f\rangle_{g}(\phi_{-\hat\tau}(x)),
\end{align*}
\begin{align*}
    \langle Y, \nabla R\rangle(x, \hat\tau) = \langle Y, \nabla R\rangle_{g(-\hat\tau)}(x) = \langle \phi_{(-\hat\tau)^*}(Y), \nabla R\rangle_{g}(\phi_{-\hat\tau}(x))
\end{align*}
and
\begin{align*}
    {\rm Ric}(Y, Y)(x, \hat\tau) = {\rm Ric}_{g(-\hat\tau)}(Y, Y)(x) = {\rm Ric}_{g}(\phi_{(-\hat\tau)^*}(Y),\phi_{(-\hat\tau)^*}(Y))(\phi_{-\hat\tau}(x)).
\end{align*}
Then
\begin{align*}
    H(Y)(x,\hat\tau) = &-\langle \nabla R, \nabla f\rangle_g(\phi_{-\hat\tau}(x))-2\langle \phi_{(-\hat\tau)^*}(Y), \nabla R\rangle_g(\phi_{-\hat\tau}(x))\\
    &+2 \operatorname{Ric}(\phi_{(-\hat\tau)*}(Y), \phi_{(-\hat\tau)*}(Y))(\phi_{-\hat\tau}(x))\\
    = &2 \operatorname{Ric}(\nabla f, \nabla f)(\phi_{-\hat\tau}(x))+4 \operatorname{Ric}(\phi_{(-\hat\tau)^*}(Y), \nabla f)(\phi_{-\hat\tau}(x))\\
    &+2 \operatorname{Ric}(\phi_{(-\hat\tau)^*}(Y), \phi_{(-\hat\tau)^*}(Y))(\phi_{-\hat\tau}(x))\\
    = &2 \operatorname{Ric}(\phi_{(-\hat\tau)^*}(Y)+\nabla f, \phi_{(-\hat\tau)*}(Y)+\nabla f)(\phi_{-\hat\tau}(x)).
\end{align*}
Moreover, by \eqref{lamda-function}, we have
\begin{align}\label{grad-ell-lambda}
    |\nabla \ell|^2_{g(-\hat\tau)}(x,\hat\tau) = |\nabla \lambda|^2_g(\phi_{-\hat\tau}(x), \hat\tau).
\end{align}
Thus by  \eqref{gradient-ell-curvature},  we get
\begin{align}\label{gradient-lambda-curvature}
    4\hat\tau |\nabla \lambda|^2(p,\hat\tau) &= -4\hat\tau R_g(p) + 4\lambda(p,\hat\tau)\notag \\
    &-\frac{8}{\sqrt{\hat\tau}} \int_0^{\hat\tau} s^{\frac{3}{2}} 2 \operatorname{Ric}_g(\phi_{(-s)^*}(Y)+\nabla f, \phi_{(-s)^*}(Y)+\nabla f)(\gamma(s)) ds\notag \\
    &+ \frac{4}{\sqrt{\hat\tau}}\int_0^{\hat\tau} \sqrt{s} R_g(\gamma(s)) ds,
\end{align}
where $p = \phi_{-\hat\tau}(x)$ and $\gamma(\hat\tau) = \phi_{-\hat\tau}(\Gamma(\hat\tau))$.

Recall that  the  minimal $\mathcal {L}$-geodesic $\gamma_2(s)=\phi_{-s}(\Gamma(s))$ ($s\in [\tau', \tau_0])$  is contained in  $M\setminus K$.  Then
\begin{align}\label{Harnack-positive}
    \operatorname{Ric}(\phi_{(-s)^*}(Y)+\nabla f, \phi_{(-s)^*}(Y)+\nabla f)(\gamma_2(s)) \geq 0.
\end{align}
Thus  for the  $\lambda$-function starting from $o_\tau'=\phi_{-\tau'}(\Gamma(\tau'))$, we get by \eqref{gradient-lambda-curvature},
\begin{align*}
    4\tau_0' |\nabla \lambda_{o_\tau'}|^2(p_\tau, \tau_0') \leq & -4\tau_0' R_g(p_\tau) + 4\lambda_{o_{\tau'}}(p_\tau,\tau_0')\notag \\
    & + \frac{4}{\sqrt{\tau_0'}}\int_0^{\tau_0'} \sqrt{s} R_g(\gamma_2(s)) ds\\
    \leq & -4\tau_0' R_g(p_\tau) + 4\lambda_{o_\tau'}(p_\tau) + 8\lambda_{o_\tau'}(p_\tau) \\
    = & -4\tau_0' R_g(p_\tau) + 12\lambda_{o_\tau'}(p_\tau),
\end{align*}
where $\tau_0'=\tau_0-\tau'$ and the second inequality comes from (\ref{L-length-stable}).
It follows
\begin{align}\label{Pel-ell-gradient}
    R(p_\tau)  \leq \frac{3\lambda_{o_\tau'}}{\tau_0'}.
\end{align}
Hence,  by \eqref{tau0'},  we  obtain (\ref{ell-center-curvature}).

\end{proof}

To get  lower bound estimate of scalar curvature, we need

\begin{lem}\label{curvature-lower-away}
    Let $(M,g)$ be a $\kappa$-noncollapsed steady gradient Ricci soliton with ${\rm Ric}\geq 0$ on $M\setminus K$. Suppose that   \eqref{linear-f} holds. Then there are  compact set $K'$ with $K\subset K'$ and  constant $c_0>0$ such that
    \begin{align}\label{upper-bound-R}
     1-R(x)\geq c_0>0, ~\forall~x\in M\setminus  K'.
     \end{align}
\end{lem}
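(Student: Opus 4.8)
The plan is to argue by contradiction using the blow-down/compactness machinery. Suppose no such compact set $K'$ and constant $c_0$ exist. Then by \eqref{scalar-equ}, which says $R + |\nabla f|^2 = 1$, the failure of \eqref{upper-bound-R} means there is a sequence $x_i \to \infty$ in $M$ with $|\nabla f|^2(x_i) = 1 - R(x_i) \to 0$, equivalently $R(x_i) \to 1$. Since $R \le 1$ everywhere, the points $x_i$ are points where the scalar curvature approaches its supremum while escaping to infinity. Recall also from \eqref{R-monotonicity} that $\langle \nabla R, \nabla f\rangle = -2\,\mathrm{Ric}(\nabla f, \nabla f) \le 0$ on $M \setminus K$ (using $\mathrm{Ric} \ge 0$ there), so $R$ is nonincreasing along the flow of $\nabla f$, i.e. along the direction of increasing $f$; this should already make $R(x_i) \to 1$ with $f(x_i) \to \infty$ (which follows from \eqref{linear-f}) hard to reconcile with any fixed positive lower bound for $R$ near the "origin level" — but that alone is not a contradiction yet, so the real work is the rescaling argument.

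The key step is to apply Proposition \ref{Dimension-reduction} to the sequence $p_i := x_i$. Rescale by $r_i$ with $r_i R(p_i) = 1$; since $R(p_i) \to 1$ we have $r_i \to 1$, so the rescaled flows $(M, g_{p_i}(t); p_i)$ are essentially unrescaled and subsequentially converge in the Cheeger–Gromov sense to a splitting ancient $\kappa$-solution $(N \times \mathbb{R}, \bar g(t) = h(t) + ds^2; p_\infty)$ with $\bar R(p_\infty, 0) = \lim r_i R(p_i) = 1$. On the limit, $\bar R \le 1$ (this inequality passes to the limit from $R \le 1$ on $M$), so $p_\infty$ is a point where the scalar curvature of the ancient solution attains its spatial supremum $1$ at time $0$. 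Now I would invoke the strong maximum principle / Harnack-type rigidity for ancient solutions with $\mathrm{Rm} \ge 0$: along an ancient solution, if the (spatial) maximum of $R$ is attained at an interior spacetime point, then by Hamilton's trace Harnack inequality $\partial_t R + 2\langle \nabla R, v\rangle + 2\,\mathrm{Ric}(v,v) \ge 0$ together with the evolution equation $\partial_t R = \Delta R + 2|\mathrm{Ric}|^2 \ge \Delta R$, one concludes $R$ is constant in space and time — and then the soliton would be a shrinking/steady product forcing a contradiction with $\kappa$-noncollapsing and $x_i \to \infty$, or more directly: a steady soliton whose scalar curvature achieves $\sup R = 1$ somewhere must, by \eqref{scalar-equ}, have $\nabla f = 0$ there, and by the strong maximum principle applied to $\Delta R + \langle \nabla f, \nabla R\rangle \le 0$ (the steady soliton identity for $R$), $R \equiv 1$ and $f \equiv \mathrm{const}$, so $M$ is compact with $\mathrm{Ric} > 0$ — impossible for a noncompact soliton. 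I would run this last rigidity argument on $(M,g)$ itself rather than the limit if the curvature hypotheses permit, but since $\mathrm{Rm} \ge 0$ only holds on $M \setminus K$, passing to the blow-down limit (where $\mathrm{Rm} \ge 0$ holds globally) is the clean way to get the needed Harnack/rigidity.

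The main obstacle I anticipate is making the rigidity step fully rigorous on the noncompact limit $N \times \mathbb{R}$: the strong maximum principle for $R$ requires care because $R$ need not attain a global maximum a priori, but here we have engineered the basepoint so that $\bar R(p_\infty, 0) = 1 = \sup_M R \ge \sup_{N\times\mathbb{R}} \bar R$, so $p_\infty$ genuinely realizes the supremum; then the standard argument (e.g. as in the proof that an ancient solution with $\mathrm{Rm}\ge 0$ attaining $\sup R$ is a steady Ricci soliton, or directly Chen's result combined with Hamilton's Harnack) applies. An alternative, possibly cleaner route: use that on the split limit, $\bar R = R_h$ (the $\mathbb{R}$-factor contributes nothing), and $\mathrm{Ric}_{\bar g}$ has a kernel in the $\partial_s$ direction; having $R_h$ achieve its max at $(p_\infty, 0)$ together with $R_h \le 1$ and the trace Harnack on $N$ forces $h(t)$ to be a steady soliton on $N$ — but $N$ carries a \emph{shrinking} ancient solution structure in the cases of interest, and a compact ancient solution that is simultaneously steady and shrinking is Ricci-flat, contradicting $\mathrm{Rm}\ge 0$, $\kappa$-noncollapsed and the normalization. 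Either way the contradiction closes, and I would also record the quantitative improvement: since the argument shows $1 - R$ cannot go to $0$ at infinity, a compactness/continuity argument on level sets $\Sigma_\tau$ (which are compact) yields the uniform $c_0 > 0$ on $M \setminus K'$ for a suitably enlarged compact $K'$.
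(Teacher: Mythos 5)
Your approach is genuinely different from the paper's, and considerably heavier. The paper's own proof is purely elementary: from $R(p_i)\ge 1-\ee$ and \eqref{scalar-equ} one gets $|\nabla f|(p_i)\le\sqrt\ee$; along a unit-speed minimal geodesic $\sigma_i$ from $o$ to $p_i$, the soliton identity gives $\frac{d}{du}\langle\nabla f,\sigma_i'\rangle=\operatorname{Ric}(\sigma_i',\sigma_i')\ge 0$ outside $K$, so the radial derivative of $f$ is nondecreasing along $\sigma_i$ once the geodesic has left $K$, and is therefore $\le\sqrt\ee$ on the entire tail; integrating over the portion outside $K$ gives $f(p_i)\le D+\sqrt\ee\,\rho(p_i)$, contradicting \eqref{linear-f} for $\ee$ small compared to $c_1^2$. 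No compactness theorem, no Harnack, no blow-down, and only $\operatorname{Ric}\ge 0$ is used, via $\nabla^2 f=\operatorname{Ric}$. Your preliminary remark that $R$ is nonincreasing in the $\nabla f$-direction is in the right spirit but, as you yourself note, does not by itself give a contradiction.

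There are two concrete problems with your blow-down route. First, you invoke Proposition~\ref{Dimension-reduction}, which requires ${\rm Rm}\ge 0$ on $M\setminus K$, whereas the lemma is deliberately stated under the weaker hypothesis $\operatorname{Ric}\ge 0$; as written, your argument proves a weaker statement than the one claimed. Second, the rigidity step is left open. The version that actually closes is: on the limit, $u=1-\bar R\ge 0$ satisfies $\partial_t u-\Delta u=-2|\operatorname{Ric}|^2\le 0$ with $u(p_\infty,0)=0$, so the parabolic strong minimum principle (a noncompact version, which needs bounded geometry or the boundedness $0\le u\le 1$ to justify) gives $u\equiv 0$ for $t\le 0$, hence $\operatorname{Ric}\equiv 0$ while $\bar R\equiv 1$, which is impossible. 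Neither of the two alternatives you propose works as stated: running the rigidity on $M$ itself fails because $\sup_M R=1$ is not attained on $M$; and the ``simultaneously steady and shrinking, hence Ricci-flat'' argument presupposes that the split factor $N$ is a compact shrinking soliton, which is not part of this lemma's hypotheses --- here $(N,h(t))$ is merely an ancient $\kappa$-solution, possibly noncompact. Once you commit to the strong-minimum-principle route and verify it applies on the noncompact limit, the contradiction does close, but it is far more machinery than the statement requires, and it still needs the curvature hypothesis upgraded.
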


\begin{proof}
    On the contrary,  we suppose that  \eqref{upper-bound-R} fails. Then there exists  a sequence of points $p_i\to \infty$ such that
    \begin{align*}
        R(p_i) \geq 1-\epsilon,
    \end{align*}
      where  $\epsilon>0$ is a small constant  to be determined lately.
   Thus  by \eqref{scalar-equ}, we  get
    \begin{align}\label{nabla-f-epsilon}
        |\nabla f|(p_i)\leq \sqrt{\epsilon}.
    \end{align}

    Denote $\sigma_i(t)$  to be the unit speed minimal geodesic with $\sigma_i(0) = o$ and $\sigma_i(s_i) = p_i$.
    We may assume that $p_i \in M\setminus K$. Let
    \begin{align*}
        s_i' = \inf\{t| ~\sigma_i(u) \in M\setminus K, ~\forall u>t\}
    \end{align*}
    and $p_i' = \sigma_i(s_i')$. Since $K$ is compact, there exists a constant $D>0$ such that $s_i'\leq D$ for all $i$ and $f(p_i')\leq D$ for all $i$.
    Thus by \eqref{nabla-f-epsilon}, we get
    \begin{align*}
        \langle \nabla f, \sigma'_i \rangle(p_i) \leq |\nabla f|(p_i) \leq \sqrt{\epsilon}.
    \end{align*}
    Hence, for any $t\in[s_i',s_i]$, we obtain
    \begin{align*}
        \langle \nabla f, \sigma'_i \rangle(\sigma_i(t)) &= \langle \nabla f, \sigma'_i \rangle(p_i) - \int_t^{s_i} \frac{d}{du}\langle \nabla f, \sigma'_i \rangle(\sigma_i(u)) du\\
        & = \langle \nabla f, \sigma'_i \rangle(p_i) - \int_t^{s_i} {\rm Ric}(\sigma'_i, \sigma'_i)(\sigma_i(u)) du\\
        & \leq \langle \nabla f, \sigma'_i \rangle(p_i)\\
        &\leq \sqrt{\epsilon}.
    \end{align*}
    Consequently,
    \begin{align}\label{f-less-growth}
        f(p_i) &= f(p_i') + \int_{s_i'}^{s_i} \frac{d}{du} f(\sigma_i(u)) du\notag\\
        &= f(p_i') + \int_{s_i'}^{s_i} \langle \nabla f, \sigma'_i \rangle(\sigma_i(u)) du\notag\\
        & \leq D + \sqrt{\epsilon}(s_i-s_i')\notag\\
        & \leq 10\sqrt{\epsilon}\rho(p_i),~\forall i\gg 1.
    \end{align}
    By choosing $\epsilon \leq \frac{c_1^2}{10000}$, \eqref{f-less-growth} contradicts to \eqref{linear-f}.  Therefore, we prove the lemma.

\end{proof}

By  Lemma  \ref{curvature-lower-away} and  Lemma \ref{uniform-shi}, we prove

  \begin{prop}\label{ell-curvature-lower}
      Let $(M,g)$ be a $\kappa$-collapsed steady gradient Ricci soliton with ${\rm Rm}\geq 0$ on $M\setminus K$. Suppose that   \eqref{linear-f} holds. Then there exists a constant $C>0$ such that
      \begin{align}\label{lower-scalar-R0}
          R(p)\geq \frac{C}{\rho(p)}
      \end{align}
      for all $\rho(p)>r_0$.
  \end{prop}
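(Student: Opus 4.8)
The plan is to control the scalar curvature from below along the integral curves of $\nabla f$ that exhaust the end of $M$, the point being to show that $R^{-1}$ grows at most linearly along them. The input that makes this work is a \emph{one-sided} bound on $\operatorname{Ric}(\nabla f,\nabla f)$ extracted from Lemma \ref{uniform-shi}.

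\emph{Step 1.} Differentiating \eqref{scalar-equ} and using the soliton equation \eqref{Def-soliton} gives $\nabla R=-2\operatorname{Ric}(\nabla f,\cdot)$. Since $g(t)=\phi_t^*g$ solves the Ricci flow, evaluating Hamilton's identity $\partial_tR=\Delta R+2|\operatorname{Ric}|^2$ at $t=0$ yields the standard steady-soliton identity $\Delta R=-\langle\nabla R,\nabla f\rangle-2|\operatorname{Ric}|^2$; combining the two,
\[
\operatorname{Ric}(\nabla f,\nabla f)=\tfrac12\Delta R+|\operatorname{Ric}|^2 .
\]
On $M\setminus K$ the hypothesis ${\rm Rm}\ge0$ forces the eigenvalues of $\operatorname{Ric}$ to be nonnegative with sum $R$, hence $0\le|\operatorname{Ric}|^2\le R^2$ and $\operatorname{Ric}(\nabla f,\nabla f)\ge0$; together with $|\Delta R|\le CR^2$ from Lemma \ref{uniform-shi} we get, on $M\setminus K$,
\[
0\le \operatorname{Ric}(\nabla f,\nabla f)\le C'R^2,\qquad C':=\tfrac{C}{2}+1 .
\]

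\emph{Step 2.} By Lemma \ref{curvature-lower-away} there are a compact set $K'\supset K$ and $c_0>0$ with $|\nabla f|^2=1-R\ge c_0$ on $M\setminus K'$. Put $b:=1+\max_{K'}f$, so that $\{f\ge b\}\subset M\setminus K'\subset M\setminus K$ and $f$ has no critical point there; since $f$ is proper by \eqref{linear-f}, any $p$ with $f(p)>b$ is joined to a point $q$ of the compact level set $\Sigma_b$ by the integral curve $\beta$ of $\nabla f$ through $p$, reparametrised by $f$ so that $\dot\beta=\nabla f/|\nabla f|^2$, with $\beta$ contained in $\{b\le f\le f(p)\}$. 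Along $\beta$, by Step 1 and $|\nabla f|^2\ge c_0$,
\[
\frac{d}{d\tau}R^{-1}(\beta(\tau))=\frac{2\operatorname{Ric}(\nabla f,\nabla f)}{R^2|\nabla f|^2}(\beta(\tau))\le\frac{2C'}{c_0}.
\]
As $R>0$ (Chen) and $\Sigma_b$ is compact, $C_1:=\max_{\Sigma_b}R^{-1}<\infty$, so integrating from $b$ to $f(p)$ and using \eqref{linear-f},
\[
R^{-1}(p)\le C_1+\frac{2C'}{c_0}\bigl(f(p)-b\bigr)\le C_1+\frac{2C'c_2}{c_0}\,\rho(p).
\]
Taking $r_0$ so large that $c_1r_0>b$ and $C_1\le\frac{2C'c_2}{c_0}r_0$, this gives $R(p)\ge\frac{c_0}{4C'c_2}\,\rho(p)^{-1}$ for all $\rho(p)>r_0$, which is \eqref{lower-scalar-R0}.

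The main obstacle is Step 1: converting the two-sided bound $|\Delta R|\le CR^2$ of Lemma \ref{uniform-shi} into the one-sided bound $\operatorname{Ric}(\nabla f,\nabla f)\le C'R^2$. This is exactly what forces $R^{-1}$ to grow no faster than linearly along the $\nabla f$-flow, and it relies on the soliton identity together with $|\operatorname{Ric}|^2\le R^2$, i.e. on ${\rm Rm}\ge0$ away from $K$; everything after that is bookkeeping with the linear growth of $f$ from \eqref{linear-f} and the fact that the end of $M$ is foliated by integral curves of $\nabla f$.
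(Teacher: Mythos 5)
Your proof is correct and follows essentially the same route as the paper's: both integrate the derivative of $R^{-1}$ along an integral curve of $\nabla f$ from a fixed level set out to $p$, controlling that derivative by Lemma \ref{uniform-shi} together with $|\operatorname{Ric}|^2\le R^2$ (a consequence of ${\rm Rm}\ge0$ away from $K$), and converting the flow parameter to the linearly growing potential via $|\nabla f|^2\ge c_0$ supplied by Lemma \ref{curvature-lower-away}. Parametrising the flow line by $f$-value instead of soliton time, and packaging the input from Lemma \ref{uniform-shi} as the explicit one-sided bound $\operatorname{Ric}(\nabla f,\nabla f)\le C'R^2$, are minor bookkeeping differences; the underlying mechanism is identical to the paper's argument (which tracks the same quantity via $|\partial_t R^{-1}|\le C_0+2$).
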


  \begin{proof} We use the argument  in the proof of \cite[Proposition 4.3]{DZ-TAMS}.   By   Lemma  \ref{curvature-lower-away},    we can choose    two compact sets  $\hat K$ and $ K'$  of $M$  with  $K\subset  K'\subset \hat K$ such that
       \begin{align}\label{upper-bound-R-1}
        1-R(x)\geq c_0>0, ~\forall~x\in \hat K\setminus  K',
        \end{align}
       where  $c_0$ is a  small constant.
       By a result of Chen \cite{Ch}, we may also assume
       \begin{align}\label{lower-bound-R}R(x)\ge c_0,  ~\forall~x\in \hat K\setminus  K'.
       \end{align}

        For  any $p\in M\setminus \hat K$, we have
      \begin{align*}
          \frac{d}{dt} R(\phi_t(p)) = {\rm Ric}(\nabla f, \nabla f)\geq 0,~\forall ~t\leq 0.
      \end{align*}
      It follows
      \begin{align*}
          0\leq R(\phi_t(p))\leq R(p),~\forall~ t\leq 0.
      \end{align*}
      Since
      \begin{align*}
          \frac{d}{dt} f(\phi_t(p)) = -|\nabla f|^2(\phi_t(p)),~\forall ~t\leq 0,
      \end{align*}
      by \eqref{scalar-equ}, we get
      \begin{align*}
          1-R(p)\leq -\frac{d}{dt} f(\phi_t(p))\leq 1,~\forall ~t\leq 0.
      \end{align*}
      Hence
      \begin{align}\label{f-R-time}
          (1-R(p))|t|\leq f(p) - f(\phi_t(p)) \leq |t|,~\forall~ t\leq 0.
      \end{align}

      By Lemma \ref{uniform-shi} and the evolution equation of scalar curvature, we have
      \begin{align*}
          \left|\frac{\partial}{\partial t} R^{-1}(p, t)\right| \leq \frac{|\Delta R(p, t)|}{R^2(p, t)}+\frac{2|\operatorname{Ric}(p, t)|^2}{R^2(p, t)} \leq C_0+2,
      \end{align*}
      and consequently,
      \begin{align}\label{R-t-lower}
          R(p, t)|t| \geq \frac{|t|}{(C_0+2)|t|+R(p, 0)^{-1}} \geq \frac{1}{2(C_0+2)}
      \end{align}
      for all $|t|\gg 1$.
  Since for any $p\in M\setminus \hat K$, there exists $x_p\in \hat K\setminus  K'$ and $t_p<0$ such that $\phi_{t_p}(x_p) = p$.   Thus by the first inequality in \eqref{R-t-lower} together with  the first inequality in  (\ref{f-R-time}),
   we get
     \begin{align}\label{R-estimate}
       R(p) & \geq \frac{1}{|t_p|} \cdot \frac{1}{(C_0+2)+\left(R\left(x_p\right)\left|t_p\right|\right)^{-1}} \notag \\
        & \geq \frac{1-R(x_p)}{f(p)-f\left(x_p\right)}   \cdot \frac{1}{(C_0+2)+\left(R\left(x_p\right)\left|t_p\right|\right)^{-1}}\notag \\
       & \geq \frac{1-R\left(x_p\right)}{2(f(p)-f(o))} \cdot \frac{1}{(C_0+2)+\left(R\left(x_p\right)\left|t_p\right|\right)^{-1}}.
          \end{align}
       Note that  $f(x_p)\leq C_0'$ for  $x_p\in \hat K\setminus  K'$.  Then  by  the second inequality in (\ref{f-R-time})
      together with  (\ref{lower-bound-R}), we have
        \begin{align*}
          |t_p|\geq f(p)-f(x_p)\geq  c_0\frac{f(p)-C_0'}{c_0} \geq \frac{1}{R\left(x_p\right)}
      \end{align*}
      as long as $\rho(p)\gg1.$
      Hence,  by inserting the above relation into (\ref{R-estimate}) together with (\ref{upper-bound-R-1}), we obtain
      $$R(p)  \geq  \frac{c_0}{2 c_2 (C_0+3) \rho(p)},$$
      where $c_2$ is the constant in \eqref{linear-f}.

  \end{proof}

\section{Construction  of  shrinking Ricci solitons}

In this section, we construct  the shrinking Ricci soliton via the blow-down  around $p_\tau$ by the estimates in Section 2, 3.

  Let $\{p_i\}\to \infty$  be any sequence in $M$. Set
  $\tau_i = f(p_i)$.  Then  by Proposition \ref{levelset-ell}, there are   $q_i\in\Sigma_{\tau_i}$ and $\tau_i'\in[c\tau_i,C\tau_i]$ such that
\begin{align}\label{lambda-qi}
    \lambda(q_i,\tau_i')\leq A_0,
\end{align}
where $c,C$ and $A_0$ are uniform constants.
Equivalently, we have
 \begin{align}\label{ell-xi}
    \ell(x_i, \tau_i')\leq A_0,
\end{align}
 where $x_i = \phi_{\tau_i'}(q_i)$.

With the help of  Lemma \ref{upper-boundofR}   and Proposition \ref{ell-curvature-lower},  we   can apply Proposition \ref{Dimension-reduction} to prove

\begin{lem}\label{local-type1-convergence}
    Let $x_i$, $\tau_i'$ defined as above. Then the sequence of rescaled Ricci flows $(M, \tau_i'^{-1}g(-\tau_i'+\tau_i' t); x_i)$ converges to $(N'\times \mathbb{R},g_\infty' = h'(t)+ds^2;x_\infty)$, $t\in(-\infty, 0]$, where $(N', h'(t))$ is a non-flat ancient  $\kappa$-solution.
\end{lem}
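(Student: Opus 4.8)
The plan is to feed the sequence $(M, g_{x_i}(t); x_i)$ with $g_{x_i}(t) = \tau_i'^{-1} g(-\tau_i'+\tau_i' t)$ into the splitting Proposition \ref{Dimension-reduction}, so the only real work is to check that the rescaling factors $\tau_i'$ are comparable to $R(x_i, 0)^{-1}$, i.e. that after rescaling the scalar curvature at the base point stays pinched between two positive constants, and that the base points run off to infinity so the hypotheses of Proposition \ref{Dimension-reduction} apply. First I would note that $q_i\in\Sigma_{\tau_i}$ with $\tau_i = f(p_i)\to\infty$ forces $q_i\to\infty$ in $M$ (by \eqref{linear-f}), hence $x_i = \phi_{\tau_i'}(q_i)\to\infty$ as well, since the soliton flow $\phi$ only moves points along $-\nabla f$ by a bounded $f$-displacement on any finite time interval and in fact $f(x_i)\ge f(q_i) - \tau_i' \to\infty$ is not quite automatic — instead I would use that $f\circ\phi_t$ is monotone and that $x_i$ lies on the minimal $\mathcal L$-geodesic realizing \eqref{ell-xi}, combined with Lemma \ref{lambda-distance} applied at $q_i$, which already gives $\mathrm{dist}_g(o,q_i)\ge \alpha\tau_i'$ and hence $q_i,x_i\to\infty$.

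Next I would pin down the curvature normalization at the base point. By Lemma \ref{upper-boundofR}, since $q_i$ is an $\ell$-center in $\Sigma_{\tau_i}$ (Proposition \ref{levelset-ell}), we have $R_g(q_i)\le C_0/\tau_i$; by Proposition \ref{ell-curvature-lower} together with \eqref{linear-f} (so $\rho(q_i)\asymp f(q_i) = \tau_i$) we have $R_g(q_i)\ge C/\rho(q_i)\ge C'/\tau_i$. Since $\tau_i'\in[c\tau_i, C\tau_i]$, combining these gives
\begin{align}\label{pinch-at-qi}
\frac{C_1}{\tau_i'}\le R_g(q_i)\le \frac{C_2}{\tau_i'}
\end{align}
for uniform $C_1,C_2>0$. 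Using the isometry $(\phi_{\tau_i'}(q_i), g) \cong (q_i, g(-\tau_i'))$ in the scaling conventions of Lemma \ref{uniform-shi}, this translates to $R_{g_{x_i}(0)}(x_i) = \tau_i' R_g(q_i)\in[C_1,C_2]$. Thus if I set $r_i := R_g(q_i)^{-1}$, then $r_i \asymp \tau_i'$, and the rescaled flows $(M, \tau_i'^{-1}g(\cdot); x_i)$ differ from the normally-rescaled flows $(M, g_{q_i}(t); q_i)$ of \eqref{rescaling-flow} only by a bounded (convergent along a subsequence) change of time and space scale $\tau_i'/r_i\to\theta\in[C_1^{-1},C_2^{-1}]$.

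Then I would invoke Proposition \ref{Dimension-reduction}: since $q_i\to\infty$ and $(M,g)$ has $\mathrm{Rm}\ge 0$ on $M\setminus K$, the normally rescaled flows $(M, g_{q_i}(t);q_i)$ subconverge in the Cheeger-Gromov sense to a splitting flow $(N'\times\mathbb R, h'(t)+ds^2; q_\infty)$ with $(N', h'(t))$ an $(n-1)$-dimensional ancient $\kappa$-solution. Rescaling time and space by the bounded factor $\theta$ (which preserves the class of ancient $\kappa$-solutions and the product structure) gives the stated limit $(M,\tau_i'^{-1}g(-\tau_i'+\tau_i' t);x_i)\to (N'\times\mathbb R, h'(t)+ds^2; x_\infty)$, $t\in(-\infty,0]$. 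Finally, non-flatness: by \eqref{pinch-at-qi} the limit satisfies $R_{g_\infty'}(x_\infty,0)\ge C_1>0$, so $(N', h'(t))$ cannot be flat.

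The main obstacle I expect is purely bookkeeping: making precise the claim that $x_i\to\infty$ and that the rescaling constant $\tau_i'$ is uniformly comparable to the true curvature scale $R_g(q_i)^{-1}$, so that Proposition \ref{Dimension-reduction} can be applied to a time-and-space rescaled version of the given sequence. The geometric content — the splitting and the $\kappa$-solution structure of the limit — is entirely supplied by Proposition \ref{Dimension-reduction}, and the two-sided curvature pinch \eqref{pinch-at-qi} is exactly what Lemma \ref{upper-boundofR} and Proposition \ref{ell-curvature-lower} were built to deliver.
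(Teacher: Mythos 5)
Your proof is correct and takes essentially the same route as the paper: conjugate by $\phi_{-\tau_i'}$ to replace $(M,\tau_i'^{-1}g(-\tau_i'+\tau_i' t);x_i)$ with the isometric flow $(M,\tau_i'^{-1}g(\tau_i' t);q_i)$, use Lemma \ref{upper-boundofR} and Proposition \ref{ell-curvature-lower} together with \eqref{linear-f} to get the two-sided pinch $C^{-1}\le\tau_i' R_g(q_i)\le C$, then invoke Proposition \ref{Dimension-reduction} on the normally rescaled flows at $q_i$ and rescale back by a bounded factor; non-flatness follows from the lower curvature bound at the base point. The only inessential addition is your discussion of whether $x_i\to\infty$ — after the isometry one only needs $q_i\to\infty$, which is immediate from $q_i\in\Sigma_{\tau_i}$ with $\tau_i\to\infty$ and \eqref{linear-f}.
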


\begin{proof}
    By the isometry
    \begin{align}\label{flow-isometry}
        (g(-\tau_i'), x_{i}) \overset{\phi_{-\tau_i'}}{\cong} (g = g(0), q_{i}),
    \end{align}
the  rescaled Ricci flow $(M, \tau_i'^{-1}g(-\tau_i'+\tau_i' t); x_i)$   is   isometric to  $(M, \tau_i'^{-1}g(\tau_i' t); q_i)$.
By Lemma \ref{upper-boundofR}   and Proposition  \ref{ell-curvature-lower}, we see that  there exists a constant $C>0$ such that
     \begin{align}\label{R-tau}
        C^{-1} \leq \tau_i' R(q_{i}) \leq C.
    \end{align}
Thus the limit of rescaled Ricci flows $(M, \tau_i'^{-1}g(\tau_i' t); q_i)$ is isometric to one of $(M, R(q_{i})g(R(q_{i})^{-1} t); q_{i} )$.
     Moreover, by Proposition \ref{Dimension-reduction},   the limit of $(M, \tau_i'^{-1}g(-\tau_i'+\tau_i' t); x_i)$ is a split flow  $(g_\infty' = h'(t)+ds^2;x_\infty)$ on
    $N'\times \mathbb{R}$.  Since $R_{g_\infty'}(x_\infty)\geq C^{-1}$ by
     \eqref{R-tau}, $(N', h'(t))$ is a non-flat ancient $\kappa$-solution.
\end{proof}

   Let $g_i(t) = \tau_i'^{-1}g(\tau_i' t)$ and $\ell_i(x, \tau) = \ell(x, \tau_i' \tau)$, where $\tau = -t$.  Then $\ell_i(x, \tau)$ is  the reduced distance from $(o,0)$ w.r.t. the backward flow  $\hat g_i(\tau)=g_i(-\tau)$.  Moreover, by the scale invariant property and \eqref{ell-xi}, we have
\begin{align}\label{scaled-ell}
    \ell_i(x_i, 1) = \ell_{o}(x_i, \tau_i')\leq  A_0.
\end{align}
On the other hand,
by the convergence in Lemma \ref{local-type1-convergence},   for any fixed radius $D>0$, we have
\begin{align}\label{scaled-curvature}
    R(p, t)\leq C(D),
\end{align}
where  $(p, t)\in B_{g_i(-1)}(x_i, D)\times [-10, -1]$ when $i\gg1$.   By (\ref{scaled-ell}) and (\ref{scaled-curvature}), we  do  the derivative  estimate  for    $\ell_i(x,\tau)$ in the following.

\begin{prop}\label{local-ell-estimate}
    Let $\ell_i(x, \tau)$ be a sequence  of  reduced distance functions   defined as above.  Then  for any fixed $D>0$, there exists uniform constant $\bar C(D)>0$ such that
    \begin{align}\label{sequence-ell-bound}
        0\leq \ell_i(x,\tau) \leq \bar C(D),~\forall (x , t)\in B_{g_i(-2)}(x_i,D)\times [-10, -2]
    \end{align}
    and
    \begin{align}\label{sequence-ell-gradient-bound}
        |\frac{\partial \ell_i}{\partial \tau}(x,\tau)|+|\nabla \ell_i(x,\tau)|\leq \bar C(D),~\forall (x, t)\in B_{g_i(-2)}(x_i,D)\times [-8, -2]
    \end{align}
    for all $i\gg1$, where $\tau=-t$.
\end{prop}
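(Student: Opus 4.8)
The plan is to derive both bounds from the three differential (in)equalities for the reduced distance in Lemma~\ref{ell-differential-equation}, combined with the local curvature bound \eqref{scaled-curvature} and the pointwise bound \eqref{scaled-ell} at the single point $(x_i,1)$, following Perelman's local estimates in \cite[Section 7]{P} (see also \cite[Section 25]{KL} or \cite{Ye}). Since the rescaled flows $\hat g_i(\tau)=g_i(-\tau)$ have uniformly bounded curvature on a parabolic neighborhood of $x_i$ by Lemma~\ref{local-type1-convergence}, and since $\ell_i(\cdot,\tau)\ge 0$ automatically (the integrand in \eqref{L-length-soliton-0} is nonnegative by positivity of $R$), the real content is the upper bounds.

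First I would prove the upper bound \eqref{sequence-ell-bound}. The key tool is inequality \eqref{ell-geq}, which says $\Box^*$-type operator applied to $\ell_i$ is controlled; more usefully, combining \eqref{ell-equ} and \eqref{ell-leq} gives a differential inequality of the form $\partial_\tau \ell_i - \Delta \ell_i \le \tfrac{|\nabla\ell_i|^2}{2} - \tfrac{\ell_i}{2\tau} + (\text{curvature terms})$, and one also has the classical gradient--time bound $|\nabla \ell_i|^2 + R \le \tfrac{C\ell_i}{\tau}$ valid along minimal $\mathcal L$-geodesics (this is essentially \eqref{grad-ell}--\eqref{ell-harnack} rewritten, cf. \cite[Section 7]{P}). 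On the region where curvature is bounded by $C(D)$, these combine to show that $\ell_i$ cannot grow too fast in space or time away from its value at $(x_i,1)$: one runs a minimal $\mathcal L$-geodesic from $(o,0)$ and uses that on the good region the integrand $\sqrt{s}(R+|\dot\Gamma|^2)$ is comparable to the length element, so that $\ell_i(x,\tau)\le C(\ell_i(x_i,1) + d^2_{g_i(-\tau)}(x,x_i)/\tau + \tau C(D))$ for $(x,\tau)$ in the stated neighborhood; with $\ell_i(x_i,1)\le A_0$, $\tau\in[2,10]$, and $d<D$, this yields a uniform $\bar C(D)$. Alternatively (and perhaps cleaner), one uses the maximum-principle form: $\min_{\Sigma_\tau}\ell_i$ is controlled, and the Harnack-type inequalities of Lemma~\ref{ell-differential-equation} propagate the bound from the center point outward over the bounded-curvature region.

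For the gradient and time-derivative bound \eqref{sequence-ell-gradient-bound}, I would use \eqref{ell-equ}: since $\partial_\tau \ell_i = \tfrac{R}{2} - \tfrac{|\nabla\ell_i|^2}{2} - \tfrac{\ell_i}{2\tau}$, it suffices to bound $|\nabla \ell_i|$, and then $|\partial_\tau\ell_i|$ follows from \eqref{scaled-curvature}, \eqref{sequence-ell-bound}, and $\tau\ge 2$. To bound $|\nabla\ell_i|$ on the slightly smaller neighborhood $[-8,-2]$, I would apply a local gradient estimate: from the bound $|\nabla\ell_i|^2\le \tfrac{C\ell_i}{\tau}+|R|$ (valid at points reached by minimal geodesics) one already gets the bound directly from \eqref{sequence-ell-bound} and \eqref{scaled-curvature}; the only subtlety is points where $\ell_i$ fails to be smooth (cut locus of the $\mathcal L$-exponential map), handled in the barrier/viscosity sense as in Perelman, so the inequalities hold a.e. and in the support sense, which is enough. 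The shrinking of the time interval from $[-10,-2]$ to $[-8,-2]$ is exactly the room needed for the parabolic/elliptic local estimate (one needs curvature control on a full backward parabolic neighborhood to estimate the gradient at an interior time).

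The main obstacle — and the reason this is stated as a separate proposition rather than quoted verbatim from \cite{P} — is that Perelman's estimates presuppose global bounded curvature, whereas here the curvature bound \eqref{scaled-curvature} is only local (on $B_{g_i(-1)}(x_i,D)\times[-10,-1]$), and the ambient soliton has $\mathrm{Rm}\ge 0$ only outside $K$; moreover the minimal $\mathcal L$-geodesics from $(o,0)$ to points near $x_i$ pass through the region near $K$ where one has no curvature sign. Thus I expect the hard part to be showing that the contribution to $\ell_i$ and to $|\nabla \ell_i|$ coming from the ``bad'' initial segment $s\in[0,\tau_i']$ of the geodesic (near $K$) is negligible after rescaling — but this is precisely what Lemma~\ref{ell-compact-set} and Lemma~\ref{upper-boundofR} were set up to provide: the bad segment has rescaled length $\tau'/\tau_0'\to 0$, and $\lambda_{o_\tau'}(p_\tau,\tau_0')\le C_0$ replaces the missing global bound by a bound relative to a basepoint $o_\tau'\in\partial K$ from which the geodesic does stay in the nonnegatively-curved region. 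So the argument reduces to: (i) transfer everything to the good basepoint $o_\tau'$ using \eqref{lambda-o-p}, (ii) apply Perelman's local estimates on the nonnegatively-curved region with the local curvature bound \eqref{scaled-curvature}, (iii) absorb the vanishing bad-segment error. I would carry out (i)--(iii) in that order.
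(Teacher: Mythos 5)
Your treatment of the upper bound \eqref{sequence-ell-bound} is essentially the paper's: concatenate the minimal $\mathcal L$-geodesic from $(o,0)$ to $(x_i,1)$ (whose $\mathcal L$-length is $\le 2A_0$ by \eqref{scaled-ell}) with a spatial geodesic from $x_i$ to $x$ at time $-\tau$, and use the local curvature bound \eqref{scaled-curvature} plus distance distortion to bound the extra contribution. That part is fine.

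The gap is in your argument for the gradient bound \eqref{sequence-ell-gradient-bound}. You first assert that the Perelman bound $|\nabla \ell_i|^2\lesssim \ell_i/\tau + R$ ``one already gets directly'' at points reached by minimal geodesics; but that bound is a consequence of \eqref{grad-ell}--\eqref{ell-harnack} together with a one-sided bound on $K$, and the sign of $K$ comes from Hamilton's trace Harnack inequality, which requires nonnegative curvature operator along the \emph{entire} minimal $\mathcal L$-geodesic from $(o,0)$. Here the geodesic starts at $o$ and must cross the region near $K$ where no curvature sign is available, so $K$ has no useful sign and the bound fails. You acknowledge this, but your proposed fix --- ``transfer to the basepoint $o_\tau'$ using \eqref{lambda-o-p}, apply Perelman's local estimates, absorb the vanishing bad-segment error'' --- does not close the gap: $o_\tau'$ is the exit point of the particular minimal geodesic to the specific $\ell$-center $p_\tau$ and shifts with the endpoint, so it is not a fixed basepoint for the whole ball $B_{g_i(-\tau)}(x_i,D)$; moreover, $\ell_i$ is the reduced distance from $o$, not from $o_\tau'$, and bounding $\nabla_x\ell_{o_\tau'}$ tells you nothing directly about $\nabla_x\ell_{o}$ (the two functions are distinct and the identity \eqref{grad-ell} expresses $\nabla\ell$ through the \emph{terminal} velocity of the $o$-based minimal geodesic). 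The estimate \eqref{lambda-o-p} is used in the paper only to derive the scalar curvature bound at $\ell$-centers (Lemma~\ref{upper-boundofR}); it is not re-invoked to prove the gradient bound.

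The paper's actual route for \eqref{sequence-ell-gradient-bound} is different and avoids the Harnack inequality altogether. Take a minimal $\mathcal L$-geodesic $\bar\Gamma_i$ from $(o,0)$ to $(x,\tau)$, let $\bar\tau_i'$ be its last entrance time into $B_{g_i(-\tau)}(x_i,D)$, and pass to the $u=\sqrt s$ parametrization $\hat\Gamma_i$. Two quantitative facts, both purely local, drive the argument: (i) since the geodesic must traverse distance $D$ inside the ball and the kinetic part of $\mathcal L$ on that segment is bounded by the already-proved $\ell_i$-bound, Cauchy--Schwarz gives $\sqrt\tau - \sqrt{\bar\tau_i'}\ge C_3(D)$, a \emph{lower} bound on the $u$-time spent in the good region; (ii) along that segment the $\mathcal L$-geodesic ODE together with local Shi estimates and $\mathrm{Ric}\ge 0$ near $x_i$ gives a differential inequality for $|\hat\Gamma_i'(u)|^2_{g_i(-u^2)}$ showing the speed cannot drop from its terminal value $|\hat\Gamma_i'(\sqrt\tau)|^2$ by more than a controlled amount. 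Combining (i) and (ii) with the bound on $\mathcal L(\bar\Gamma_i)$ forces $|\hat\Gamma_i'(\sqrt\tau)|^2\le C_9(D)$, and then \eqref{grad-ell} converts this into $|\nabla\ell_i|^2\le C$; the $\partial_\tau\ell_i$ bound follows from \eqref{ell-equ}, \eqref{sequence-ell-bound}, \eqref{scaled-curvature}, and $\tau\ge 2$, as you noted. This terminal-segment velocity estimate is the key idea your proposal is missing.
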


\begin{proof}
     The nonnegativity of $\ell_i$ follows from the definition (\ref{L-length-soliton-0-hat}) and the nonnegativity of scalar curvature for ancient solutions. In the following, we  always  denote $C,  \bar C, C_i$ to be uniform constants only depending on $D$.

    Let $\Gamma_i(\tau)$ be the minimal $\mathcal{L}$-geodesics between $(o,0)$ and $(x_i,1)$. Then by \eqref{scaled-ell}, we have
    \begin{align*}
        \mathcal{L}(\Gamma_i(\tau))\leq 2A_0.
    \end{align*}
    Since the Ricci curvature  of $g_i(t)$ is nonnegative,  by the Ricci  equation  (\ref{ricci-equ}) it holds
    \begin{align}\label{geodesic-ball-harnack}
        B_{g_i(t)}(x_i,D)\subseteq B_{g_i(-1)}(x_i,D), ~\forall~t\leq -1.
    \end{align}
    On the other hand, by \eqref{scaled-curvature} and the distance distortion estimate we also have
    \begin{align}\label{geodesic-ball-distortion}
        B_{g_i(-1)}(x_i,D)\subseteq B_{g_i(t)}(x_i,C_0(D)), ~\forall~t\in[-10, -2].
    \end{align}

    Now we  fix $\tau\in [2, 10]$ and let $\sigma_i(s)$, $s\in[1,\tau]$, be the minimal geodesic from $x_i$ to $x$ w.r.t. $g_i(-\tau)$, where $x\in B_{g_i(-\tau)}(x_i,D)\subseteq B_{g_i(-2)}(x_i,D)\subseteq B_{g_i(-1)}(x_i,D)$. Then  $\sigma_i(s) \subseteq  B_{g_i(-1)}(x_i,D)$. Thus by \eqref{scaled-curvature} and the distance distortion estimate, we obtain
    \begin{align*}
        |\sigma_i'(s)|^2_{g_i(-s)} \leq e^{2nC(D)|s|}|\sigma_i'(s)|^2_{g_i(-\tau)} \leq C(D)|\sigma_i'(s)|^2_{g_i(t)}
    \end{align*}
    for all $s\in[1,\tau]$. Consequently,
    \begin{align}\label{L-combine}
        L_i(x, \tau) &\leq \mathcal{L}(\Gamma_i)+\int_1^\tau \sqrt{s}(R_{g_i(-s)}+|\sigma'(s)|^2_{g_i(-s)}) ds\notag\\
        &\leq \mathcal{L}(\Gamma_i)+\int_1^\tau \sqrt{s}(R_{g_i(-s)}+C(D)|\sigma'(s)|^2_{g_i(-\tau)}) ds\notag\\
        &\leq 2A_0 + 2\sqrt{\tau} C(D) + \frac{2}{3}C(D)\frac{D^2(\tau^\frac{3}{2}-1)}{(\tau-1)^2}\notag\\
        &\leq 2A_0+ C'(D).
    \end{align}
    Hence,  \eqref{sequence-ell-bound} follows from \eqref{L-combine}  together with \eqref{geodesic-ball-distortion}.

    Next we prove (\ref{sequence-ell-gradient-bound}).  Let $\tau\in[2,8]$ and $\bar \Gamma_i(s)$ be a minimal $\mathcal{L}$-geodesic between $(o,0)$ and $(x,\tau)$, where $x\in B_{g_i(-\tau)}(x_i,D)$. Set
    \begin{align*}
        \bar\tau_i' = \inf\{t~| ~\bar \Gamma_i(s) \in B_{g_i(-\tau)}(x_i,D),~\forall s>t\}
    \end{align*}
    and $q_i = \bar \Gamma_i(\bar\tau_i')$.
    By a change of variable $u = \sqrt{s}$,  we  write $\hat\Gamma_i(u) = \bar\Gamma_i(u^2)$. Then
    \begin{align*}
        \mathcal{L}(\hat\Gamma_i(u)) = \int_0^{\sqrt{\tau}} 2u^2R_{g_i(-u^2)}+\frac{1}{2}|\hat\Gamma'_i(u)|^2_{g_i(-u^2)} du.
    \end{align*}
    It follows
    \begin{align}\label{geodesic-tau-i'}
        \int_{\sqrt{\bar\tau_i'}}^{\sqrt{\tau}} \frac{1}{2}|\hat\Gamma'_i(u)|^2_{g_i(-u^2)} du\leq \mathcal{L}(\hat\Gamma_i(u)) = 2\sqrt{\tau}\ell_i(x,\tau)\leq C_1(D),
    \end{align}
    where the last inequality follows from \eqref{sequence-ell-bound}.
    On the other hand, by  the distance distortion estimates, we have
    \begin{align*}
        D^2 &= d_{g_i(-\tau)}^2(x,q_i)\leq (\int_{\sqrt{\bar\tau_i'}}^{\sqrt{\tau}} |\hat\Gamma'_i(u)|_{g_i(-\tau)} du)^2\\
        &\leq C(D)(\int_{\sqrt{\bar\tau_i'}}^{\sqrt{\tau}} |\hat\Gamma'_i(u)|_{g_i(-u^2)} du)^2\\
        &\leq C(D)(\sqrt{\tau}-\sqrt{\bar\tau_i'})\int_{\sqrt{\bar\tau_i'}}^{\sqrt{\tau}} |\hat\Gamma'_i(u)|^2_{g_i(-u^2)} du.
        \end{align*}
    Thus by (\ref{geodesic-tau-i'}), we get
    \begin{align}\label{tau-i'-lower-bound}
        \sqrt{\tau}-\sqrt{\bar\tau_i'}\geq C_3(D)
    \end{align}
    for all $x\in B_{g_i(-\tau)}(x_i,D)$ and $i\gg1$.

   We notice  that $\hat\Gamma'_i(u)$ satisfies   the minimal $\mathcal{L}$-geodesic equation,
    \begin{align*}
        \nabla_{\hat\Gamma'_i(u)}\hat\Gamma'_i(u)-2u^2\nabla R+4u{\rm Ric}(\hat\Gamma'_i(u))=0,  \forall ~u\in[\sqrt{\bar\tau_i'}, \sqrt{\tau}].
    \end{align*}
    Then by  Shi's estimates and the fact that ${\rm Ric}\geq 0$ on $B_{g_i(-1)}(x_i,D)$ for all $i\gg 1$, we  have
    \begin{align*}
        \frac{d}{du}|\hat\Gamma'_i(u)|^2_{g_i(-u^2)} &= 4u^2\langle\nabla R, \hat\Gamma'_i\rangle - 4u {\rm Ric}(\hat\Gamma'_i, \hat\Gamma'_i)\\
      &\le  4u^2|\langle\nabla R, \hat\Gamma'_i\rangle |_{g_i(-u^2)}\\
       &\leq C_4(D)|\hat\Gamma'_i(u)|_{g_i(-u^2)}\\
        &\leq 4C_4(D)(1+|\hat\Gamma'_i(u)|^2_{g_i(-u^2)}).
    \end{align*}
    Integrating the above inequality and  by \eqref{geodesic-tau-i'},  we obtain
    \begin{align*}
       & |\hat\Gamma'_i(\sqrt{\tau})|^2_{g_i(-\tau)}-|\hat\Gamma'_i(u)|^2_{g_i(-u^2)}
       \\&=  \int_{u}^{\sqrt{\tau}} \frac{d}{dv}|\hat\Gamma'_i(v)|^2_{g_i(-v^2)} dv\\
        &\leq 4\sqrt{\tau}C_4(D) + 8C_4(D)\int_{\sqrt{\bar\tau_i'}}^{\sqrt{\tau}} \frac{1}{2}|\hat\Gamma'_i(u)|^2_{g_i(-u^2)} du\\
        &\leq C_5(D).
 \end{align*}
Consequently,
\begin{align}\label{grdient-gamma}
   |\hat\Gamma'_i(u)|^2_{g_i(-u^2)}\ge   |\hat\Gamma'_i(\sqrt{\tau})|^2_{g_i(-\tau)}-C_5(D),  ~\forall ~u \in[\sqrt{\bar\tau_i'},   \sqrt{\tau}].
   \end{align}

     By \eqref{tau-i'-lower-bound} and  (\ref{grdient-gamma}),
      we have
    \begin{align*}
        \mathcal{L}(\bar \Gamma_i) &\geq \int_{\sqrt{\tau_i'}}^{\sqrt{\tau}} (\frac{1}{2}|\hat\Gamma'_i(u)|^2_{g_i(-u^2)} + 2u^2R_{g_i(-u^2)} )du\\
        &\geq \int_{\sqrt{\tau_i'}}^{\sqrt{\tau}} \frac{1}{2}|\hat\Gamma'_i(u)|^2_{g_i(-u^2)} du\\
        &\geq C_6(D)(\sqrt{\tau}-\sqrt{\tau_i'})|\hat\Gamma'_i(\sqrt{\tau})|^2_{g_i(-\tau)}-C_6(D)C_5(D)\\
        &\geq C_7(D)|\hat\Gamma'_i(\sqrt{\tau})|^2_{g_i(-\tau)}-C_8(D).
    \end{align*}
  On other hand,  by \eqref{sequence-ell-bound},  we have
    \begin{align*}
        \mathcal{L}(\bar \Gamma_i)\leq 2\sqrt{\tau} \ell_i(x,\tau)\leq 10C(D).
    \end{align*}
    Thus combining above two inequalities, we obtain
    \begin{align*}
        |\hat\Gamma'_i(\sqrt{\tau})|^2_{g_i(-\tau)}\leq C_9(D).
    \end{align*}
    By (\ref{grad-ell}), it follows
        \begin{align*}
        |\nabla \ell_i(x, \tau)|^2 = |\bar \Gamma'_i(\tau)|^2 = \frac{|\hat\Gamma'_i(\sqrt{\tau})|^2}{4\tau}\leq \frac{C_9(D)}{8}.
    \end{align*}
    Moreover, by \eqref{ell-equ} and \eqref{sequence-ell-bound},  we deduce
    \begin{align*}
        2|\frac{\partial}{\partial \tau} \ell_i(x, \tau)| \leq |\nabla \ell_i(x, \tau)|^2+ R + \frac{\ell_i(x,\tau)}{\tau}\leq \frac{C_9(D)}{8} + C(D).
    \end{align*}
Hence,   the above two relations give (\ref{sequence-ell-gradient-bound}).
 The proposition is proved.

 \end{proof}

\begin{rem}\label{remark}
    Since the Ricci curvature of $(M, g)$ is just nonnegative  outside the compact set $K$, we
    cannot use the global Harnack inequality directly  in \cite[Section 7]{P} to  get the gradient estimates for $\ell_i(x,\tau)$-function at any space-time $(x, \tau)$. We shall restrict the corresponding  minimal $\mathcal{L}$-geodesic on $M\setminus K$ and do the time gap estimate  for the  restricted   minimal $\mathcal{L}$-geodesic, see Lemma \ref{ell-compact-set} and  (\ref{tau-i'-lower-bound}).

\end{rem}

By  Lemma \ref{local-type1-convergence} and Proposition \ref{local-ell-estimate}, we are able to construct a limit  non-flat shrinking Ricci soliton via  the rescaled Ricci flows of $(M, g(t))$ by  following  the  strategy in  \cite[Section 9]{P}.

\begin{prop}\label{Pel-shrinker}
    Let $x_i$, $\tau_i'$  be chosen as in Lemma \ref{local-type1-convergence}. Then the sequence of rescaled Ricci flows $(M, \tau_i'^{-1}g(\tau_i' t); x_i)$ converges to $(N'\times \mathbb{R},g_\infty' = h'(t)+ds^2;x_\infty)$, $t\in(-\infty, -1]$, where $(N', h'(t))$ is a  non-flat  shrinking Ricci soliton.
\end{prop}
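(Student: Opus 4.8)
The plan is to follow Perelman's argument from \cite[Section 9]{P} on the existence of a gradient shrinking soliton arising as a blow-down limit, adapted to the present setting where the reduced distance is taken with respect to the base point $(o,0)$ and the relevant monotonicity (via Lemma \ref{ell-differential-equation}) is available. By Lemma \ref{local-type1-convergence} we already know that the rescaled flows $(M, \tau_i'^{-1}g(\tau_i' t); x_i)$ subconverge in the Cheeger--Gromov sense to a splitting ancient $\kappa$-solution $(N'\times\mathbb R, g_\infty'=h'(t)+ds^2; x_\infty)$, non-flat by the curvature pinching $C^{-1}\le\tau_i' R(q_i)\le C$. What remains is to upgrade this limit to a \emph{shrinking Ricci soliton} by tracking the reduced distance functions $\ell_i(x,\tau)=\ell(x,\tau_i'\tau)$ and their limit.

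First I would pass to the limit in the $\ell_i$. By Proposition \ref{local-ell-estimate} the functions $\ell_i$ are, for every fixed $D$, uniformly bounded together with $|\partial_\tau\ell_i|$ and $|\nabla\ell_i|$ on $B_{g_i(-2)}(x_i,D)\times[-8,-2]$; combined with the smooth convergence of the metrics this gives, after a diagonal argument over $D$, a subsequence along which $\ell_i\to\ell_\infty$ in $C^0_{loc}$ and weakly in $W^{1,2}_{loc}$ on $N'\times\mathbb R\times(-\infty,-1]$ (more precisely on the time slab where the estimates apply, then extended by the differential inequalities). The key point is that the two differential (in)equalities of Lemma \ref{ell-differential-equation} pass to the limit: \eqref{ell-equ} holds in the barrier/distributional sense for $\ell_\infty$, and the inequality obtained by combining \eqref{ell-geq} and \eqref{ell-leq} also persists. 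Next I would invoke Perelman's reduced-volume monotonicity: the reduced volume $\tilde V_i(\tau)=\int (4\pi\tau)^{-n/2}e^{-\ell_i}\,dV_{g_i(-\tau)}$ is monotone nondecreasing in $\tau$ and bounded above by the Euclidean value; since along a type-I-like blow-down the reduced volume has a limit, $\tilde V_\infty(\tau)$ is \emph{constant} in $\tau$. The equality case in Perelman's monotonicity forces \eqref{ell-leq} (equivalently \eqref{ell-geq}) to be an equality, and then by the last sentence of Lemma \ref{ell-differential-equation} together with \eqref{ell-equ} one gets, for the limit $(N'\times\mathbb R, g_\infty')$,
\[
R_{ij} + \nabla_i\nabla_j \ell_\infty - \frac{1}{2\tau} g_{ij} = 0,
\]
i.e. $g_\infty'$ is a gradient shrinking Ricci soliton with potential $\ell_\infty$. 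Since the split factor $\mathbb R$ is flat, this descends to $(N', h'(t))$ being a non-flat gradient shrinking soliton, and non-flatness is already guaranteed by Lemma \ref{local-type1-convergence}.

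The main obstacle is justifying the constancy of the limiting reduced volume and hence the rigidity in Perelman's monotonicity formula, in a setting where the soliton $(M,g)$ has ${\rm Rm}\ge 0$ only outside a compact set. The subtlety is two-fold: one must make sense of the reduced volume integral on the (complete, non-compact) rescaled flows with a uniform upper bound and no loss of mass in the limit, and one must know that $\tilde V_i(\tau)$ converges to a limit that is genuinely $\tau$-independent. For the first issue I would use that the $\ell$-centers $x_i$ have $\ell_i(x_i,1)\le A_0$ by \eqref{scaled-ell}, together with $\kappa$-noncollapsing and the local bounds of Proposition \ref{local-ell-estimate}, to get the usual Gaussian-type concentration estimate $\ell_i(x,\tau)\ge c\,d_{g_i(-\tau)}^2(x,x_i)/\tau - C$ away from $x_i$ (this is where the restriction of $\mathcal L$-geodesics to $M\setminus K$ and the time-length estimate \eqref{tau-i'-lower-bound} matter, exactly as flagged in Remark \ref{remark}), ensuring the tails of the reduced-volume integral are uniformly small and pass to the limit. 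For the second, I would use that the soliton structure makes all rescaled flows $(M,\tau_i'^{-1}g(\tau_i' t))$ isometric to the fixed flow rescaled at $R(q_i)$ (by \eqref{flow-isometry} and \eqref{R-tau}), so the reduced volumes $\tilde V_i$ are essentially evaluations of a single monotone bounded function along scales going to $\infty$; its limit as the scale tends to infinity is a constant, hence $\tilde V_\infty(\tau)\equiv$ const. Once constancy is in hand, the equality discussion in Lemma \ref{ell-differential-equation} and standard elliptic regularity for $\ell_\infty$ finish the proof that the limit is a (smooth, complete) non-flat gradient shrinking Ricci soliton on $N'\times\mathbb R$, and therefore on $N'$.
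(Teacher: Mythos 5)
Your proposal follows essentially the same route as the paper: both implement Perelman's \cite[Section~9]{P} blow-down argument, using Lemma \ref{local-type1-convergence} for the geometric limit, Proposition \ref{local-ell-estimate} to extract $\ell_\infty$ in $C^0_{loc}\cap W^{1,2}_{loc}$, and the constancy of the limiting reduced volume (the paper citing Morgan--Tian \cite[Section~9]{MT}, you arguing it from monotonicity of $\tilde V$ along the single fixed flow evaluated at scales $\tau_i'\tau\to\infty$) to force equality in the differential inequalities of Lemma \ref{ell-differential-equation}, whence Perelman's $v$-computation yields the shrinker identity. Two small corrections: the reduced volume is monotone \emph{non-increasing} in $\tau$, not non-decreasing; and the passage from the equalities in \eqref{ell-geq}--\eqref{ell-leq} to $R_{ij}+\nabla_i\nabla_j\ell_\infty-\tfrac{1}{2\tau}g_{ij}=0$ is not immediate from the last sentence of Lemma \ref{ell-differential-equation} but requires the evolution identity $(\partial_\tau-\Delta+R)v=-2\tau\bigl|{\rm Ric}+\nabla^2\ell_\infty-\tfrac{1}{2\tau}g\bigr|^2 u$ applied to $v\equiv 0$, which the paper spells out.
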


\begin{proof}
    By Lemma \ref{local-type1-convergence}, we know that the sequence of rescaled Ricci flows $(M, \tau_i'^{-1}g(\tau_i' t); x_i)$ converges to $(M_\infty=N'\times \mathbb{R},h'(t)+ds^2;x_\infty)$, $(-\infty, -1]$, where $(N', h'(t))$ is a  non-flat ancient $\kappa$-solution. We  only need to show $(N', h'(t))$ is indeed a shrinking Ricci soliton.

     By Proposition \ref{local-ell-estimate},  the sequence of functions $\{\ell_i\}$  converges subsequently to a function $\ell_\infty$ on $M_\infty\times [2,8]$ in the $C^\alpha_{loc}$ sense for any $\alpha\in(0,1)$.  Since $\ell_\infty$ is also locally  Lipschitz, which  is an element in $W^{1,2}_{loc}(M_\infty\times [2,8])$,   we may  assume that  $\{\ell_i\}$  converges weakly in $W^{1,2}_{loc}$ to $\ell_\infty$. Moreover, according to \cite[Section 9]{MT}, by the monotonicity of reduced volume, one can show the inequalities \eqref{ell-geq} and \eqref{ell-leq} in Lemma \ref{ell-differential-equation} for $\ell_\infty$ become  equalities  simultaneously. Namely,   $\ell_\infty$ satisfies
    \begin{align*}
        2 \frac{\partial l_{\infty}}{\partial \tau}+\left|\nabla l_{\infty}\right|^2-R_{g'_{\infty}}+\frac{l_{\infty}}{\tau}=0
    \end{align*}
    and
    \begin{align}\label{ell-diff-equation}
        2 \Delta_{g'_{\infty}} l_{\infty}-\left|\nabla l_{\infty}\right|^2+R_{g'_{\infty}}+\frac{l_{\infty}-n}{\tau}=0
    \end{align}
    in the distributional sense.

     Let
    \begin{align*}
        u = (4\pi \tau)^{-\frac{n}{2}}e^{-\ell_\infty}>0.
    \end{align*}
   Then  $u$ satisfies the conjugate heat equation
    \begin{align}\label{backward-limit}
        \frac{\partial u}{\partial \tau} - \Delta_{g'_{\infty}} u + R_{g'_{\infty}} u = 0
    \end{align}
    in the distributional sense. Thus the standard regularity theory gives smoothness of $\ell_\infty$. On the other hand,  if we  let
    $$v = \left(\tau\left(2 \Delta_{g'_{\infty}} l_{\infty}-\left|\nabla l_{\infty}\right|^2+R_{g'_{\infty}}\right)+l_{\infty}-n\right) u ,$$
    then  by \eqref{ell-diff-equation}, we have $v = 0$.
    Following  the Perelman's computation \cite[Proposition 9.1]{P},  we get by (\ref{backward-limit}),
    \begin{align*}
        \left(\frac{\partial}{\partial \tau}-\Delta_{g'_{\infty}}+R_{g'_{\infty}}\right) v=-2 \tau\left|{\rm Ric}_{g'_{\infty}}+\nabla^2 l_{\infty}-\frac{1}{2 \tau} g'_{\infty}(\tau)\right|^2 u
    \end{align*}
    on $M_\infty\times [2,8]$. Thus by $u>0$, we obtain
    \begin{align*}
        {\rm Ric}_{g'_{\infty}}+\nabla^2 l_{\infty}-\frac{1}{2 \tau} g'_{\infty}(\tau)=0
    \end{align*}
    on $M_\infty\times [2,8]$. This implies that $(M_\infty, g_\infty'(t); x_\infty)$ ($t\in[-8, -2]$)  is a shrinking Ricci soliton.  By  the uniqueness of Ricci flow and the splitting  structure $M_\infty=N'\times \mathbb{R}$,  we prove  the theorem immediately.
\end{proof}

By (\ref{flow-isometry}), we actually prove

\begin{theo}\label{asymtotic-shrinker}
    Let $q_i$, $\tau_i' $ be the sequences chosen  as in \eqref{lambda-qi}.  Then the sequence of rescaled Ricci flows $(M,g_{q_i}(t);q_i) $ converges to $(N'\times \mathbb{R},h'(t)+ds^2; q_\infty)$, $t\in (-\infty, 0]$ in the Cheeger-Gromov sense, where $(N', h'(t))$ is a non-flat shrinking Ricci soliton.
\end{theo}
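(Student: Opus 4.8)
The plan is to deduce the theorem from Proposition \ref{Pel-shrinker} by transporting its convergence statement from the ``$x_i$-gauge'' back to the ``$q_i$-gauge'' via the soliton isometry \eqref{flow-isometry}. All of the analytic content — the uniform and gradient bounds for the rescaled reduced distances $\ell_i$ from Proposition \ref{local-ell-estimate}, and the reduced-volume monotonicity argument forcing the gradient-shrinking-soliton identity — is already supplied by Proposition \ref{Pel-shrinker}; what remains is bookkeeping: a change of base point, a time translation, and a comparison of two rescaling normalizations.

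First I would upgrade \eqref{flow-isometry} to a time-preserving isometry of Ricci flows. Since $g(s)=\phi_s^*g$ and $\{\phi_s\}$ is a one-parameter group, the diffeomorphism $\phi_{\tau_i'}=(\phi_{-\tau_i'})^{-1}$ from \eqref{flow-isometry} satisfies $\phi_{\tau_i'}^*g(s)=g(s+\tau_i')$ for all $s$, so
\[
\big(M,\ \tau_i'^{-1}g(\tau_i't);\ q_i\big)\ \cong\ \big(M,\ \tau_i'^{-1}g(\tau_i'(t-1));\ x_i\big),\qquad x_i=\phi_{\tau_i'}(q_i),
\]
via $\phi_{\tau_i'}$, preserving the flow parameter $t$. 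The right-hand flow differs from the flow $t\mapsto\tau_i'^{-1}g(\tau_i't)$ of Proposition \ref{Pel-shrinker} only by the time translation $t\mapsto t-1$, and a Cheeger--Gromov limit that is a splitting gradient shrinking Ricci soliton flow stays one after any time translation. Hence, after passing to a subsequence, $(M,\tau_i'^{-1}g(\tau_i't);q_i)$ converges in the Cheeger--Gromov sense to a splitting flow $(\bar N\times\mathbb R,\ \bar h(t)+ds^2;\ q_\infty)$ with $(\bar N,\bar h(t))$ a non-flat gradient shrinking Ricci soliton and $q_\infty=\lim_i q_i$.

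Next I would reconcile the two rescalings. By definition $g_{q_i}(t)=R(q_i)\,g(R(q_i)^{-1}t)$, so with $a_i:=\tau_i'R(q_i)$ one has $g_{q_i}(t)=a_i\big(\tau_i'^{-1}g(\tau_i'a_i^{-1}t)\big)$; that is, $g_{q_i}(\cdot)$ is the parabolic rescaling by $a_i$ of the flow $t\mapsto\tau_i'^{-1}g(\tau_i't)$ based at $q_i$. By \eqref{R-tau} one has $C^{-1}\le a_i\le C$, so after a further subsequence $a_i\to\mu\in[C^{-1},C]$, and $(M,g_{q_i}(t);q_i)$ converges to the parabolic rescaling by $\mu$ of the limit from the previous paragraph. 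A parabolic rescaling of a splitting non-flat gradient shrinking Ricci soliton is again one — the flat $\mathbb R$-factor is unchanged and $\bar N$ is merely rescaled to some $N'$ — so $(M,g_{q_i}(t);q_i)$ converges to $(N'\times\mathbb R,\ h'(t)+ds^2;\ q_\infty)$, where $(N',h'(t))$ is a non-flat gradient shrinking Ricci soliton; non-flatness persists because the scalar curvature of the limit at $q_\infty$ is bounded below away from $0$ by \eqref{R-tau}.

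Finally, for the time interval, I would combine this with Lemma \ref{local-type1-convergence}, which (via Proposition \ref{Dimension-reduction}) already exhibits the limit as an ancient $\kappa$-solution on $(-\infty,0]$, while the above identifies it as a gradient shrinking Ricci soliton on a time sub-interval. Since a gradient shrinking Ricci soliton flow is self-similar — equivalently, by uniqueness of complete Ricci flow of bounded curvature — the soliton identity ${\rm Ric}+\nabla^2 f-\tfrac{1}{2\tau}g=0$ and the isometric $\mathbb R$-splitting propagate to the whole interval on which the limit flow is defined, giving the statement for $t\in(-\infty,0]$. I expect the only point needing real care to be this propagation of the soliton structure from the sub-interval to the full ancient interval, together with correctly tracking the time translation and the bounded factor $a_i$; beyond that, the theorem follows at once from Proposition \ref{Pel-shrinker} and \eqref{flow-isometry}.
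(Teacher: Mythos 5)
Your proof is correct and follows exactly the route the paper intends: it merely unpacks the one-line remark ``By \eqref{flow-isometry}, we actually prove\dots'' preceding the theorem, transporting Proposition~\ref{Pel-shrinker} from the $x_i$-gauge back to the $q_i$-gauge via $\phi_{\tau_i'}$ and reconciling the two normalizations through the bounded ratio $a_i=\tau_i'R(q_i)\in[C^{-1},C]$ from \eqref{R-tau}. The careful bookkeeping of the time shift $t\mapsto t-1$ and the propagation of the soliton identity to all of $(-\infty,0]$ by self-similarity/uniqueness is precisely what the paper leaves implicit, and your treatment of these points is sound.
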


\section{Proofs of  main theorems}

In this section, we prove main results in Introduction 0  by following the strategy in  \cite{ZZ-4d}. 



\subsection{Proofs  of Theorem \ref{main-notype2} and Corollary \ref{alter-coro}}

As in \cite[Theorem 0.2]{ZZ-4d}, we use  the argument by contradiction to prove  Theorem \ref{main-notype2}.  Suppose that  there exists a sequence of normally scaled Ricci flows $(M, g_{p_i}(t); p_i)$ $(p_i\to \infty)$, which converges to a limit Ricci flow $(N\times \mathbb{R}, h(t) + ds^2; p_\infty)$, where $(N, h(t), p_\infty)$ is a compact ancient $\kappa$-solution of type II.  Then by \cite[Lemma 2.2]{ZZ-4d},    under the condition  of ${\rm Rm} \geq 0$ and ${\rm Ric}>0$ on $M\setminus K$ and the existence of    compact  split (not Ricci-flat)   ancient $\kappa$-solution $h(t)$,  the scalar curvature of $(M, g)$  decays to zero uniformly, i.e.
  \begin{align}\label{R-decay}
       \lim_{x \rightarrow \infty}R(x) = 0.
   \end{align}
Thus by the normalization  identity (\ref{scalar-equ}), it follows
\begin{align}\label{gradient-esti}
|\nabla f(x)|\to 1~{\rm as}~ \rho(x)\to\infty.
\end{align}
  Moreover, by \cite[Lemma 2.2]{DZ-SCM} (or \cite[Theorem 2.1]{CDM}), $f$ satisfies (\ref{linear-f}).  Hence,  all results in  Section  1-4   hold.

\begin{proof}[Proof of Theorem \ref{main-notype2}]
    Choose a sequence of  $t_k\to -\infty$.  Then by \cite[Lemma 4.3]{ZZ-4d},  for the  compact ancient $\kappa$-solution  $h(t)$ of type II, we know
        \begin{align}\label{diameter-to-infty}
            \lim_{k\rightarrow\infty}{\rm Diam}(h(t_k))R_h^{\frac{1}{2}}(p_\infty,t_k)\geq\lim_{k\rightarrow\infty}{\rm Diam}(h(t_k))R_{h,min}^{\frac{1}{2}}(t_k)=\infty,
        \end{align}
      where   $R_{h,min}(t)=\min \{R_h(\cdot, t)\}$  and $R_h(\cdot, t)$ denote the scalar curvature of $(N, h(x,t))$.
        Thus   there is  $t_0\in\{t_k\}$ for some $k_0$ such that
            \begin{align}\label{type2-increasing}
                {\rm Diam}(h(t_0))R_h^{\frac{1}{2}}(p_\infty,t_0)>100C_0A,
            \end{align}
where  the  large constants  $C_0$   and $A$  will be determined  lately.

      Recall that
      $(M,g_{p_i}(t);p_i)$ is $\epsilon$-close to $(N\times \mathbb{R},h(t)+ds^2; p_{\infty})$ when $i\gg1$.
      Set $T=t_{0}R^{-1}(p_i)$ and choose  $\epsilon<-\frac{1}{100t_{0}}$.
      We consider  the  split ancient  flow   $(\tilde N, \tilde h(t))$  of codimension one   for  the  normally rescaled flow
       $( M, g_{(\phi_T(p_i))}(t); $
      \newline $\phi_T(p_i))$  as in  Proposition \ref{Dimension-reduction}.
      Notice  that
        \begin{align}\label{metric-scale} &R(\phi_T(p_i))\bar g(T)= R(p_i,T)\bar g(T)\notag\\
           &=R_{g_{p_i}}(p_i,t_{0})\bar g_{p_i}(t_{0}),
           \end{align}
           where $\bar g(T)$ is the induced metric of $g$ on the level set  $f^{-1}(f(\phi_T(p_i)))$ and $\bar g_{p_i}(t)$ is the induced metric of $g_{p_i}(t)$ on the level set  $f^{-1}(f(p_i))$, respectively.
           Then according to  the proof of \cite[Proposition 3.6]{ZZ-4d},
       $(\tilde N,   \tilde h(0))$ is in fact
      a  limit of hypersurfaces
         $$( f^{-1}(f(\phi_T(p_i))), R(\phi_T(p_i))\bar g(T); \phi_T(p_i)).$$
            Thus by the convergences  of  $g_{p_i}(t)$,
            $\tilde h(0)=R_h(p_\infty,t_0)h(t_0)$.
    Consequently,  by  (\ref{type2-increasing}),
     we get
          \begin{align}\label{qi-sequence-large}
             & {\rm Diam}(R(\phi_T(p_i))\bar g(T))= {\rm Diam}(R(p_i,T)\bar g(T))\notag\\
           &= {\rm Diam}(\bar g_{p_i}(t_{0}))R_{g_{p_i}}^\frac{1}{2}(p_i,t_{0})
>90C_0A
          \end{align}
          as long as  $i\gg1$.


         By Proposition \ref{levelset-ell}, on each level set $\Sigma_{f(\phi_{T}(p_i))}$, there exists  $q_i \in \Sigma_{f(\phi_{T}(p_i))}$ and  $\tau_i\in[cf(\phi_{T}(p_i)), Cf(\phi_{T}(p_i)) ] $ such that $\lambda(q_i, \tau_i) \leq A_0$, where $c, C$ and $A_0$ are uniform constants in Proposition \ref{levelset-ell}.  Clearly,  $\tau_i\to \infty$  since $f(\phi_{T}(p_i)) \to \infty$ when $i\to \infty$.  Thus  we can apply Theorem  \ref{asymtotic-shrinker} to see that  the sequence of normally rescaled flows $(M, g_{q_i}(t); q_i)$ converges to a nonflat shrinking gradient Ricci soliton $(N'\times \mathbb{R}, h'(t) + ds^2, q_\infty)$.  It follows that   $(N', h'(t))$ is also a non-flat shrinking soliton with ${\rm Rm} \geq 0$.
         Furthermore, by  \cite[Lemma 2.6 and Remark  2.8]{ZZ-4d},  $(N', h'(t))$ is compact as same as $(\tilde N, \tilde h(t))$.   Hence,  by the classification result of   compact $\kappa$-noncollapsed shrinking Ricci solitons   with ${\rm R_m}\ge 0$ (cf. \cite[Theorem 7.34]{CLN}, \cite{BW}, \cite{Ni}), there exists a  large constant  $A$ such that
               \begin{align}\label{diameter-N}
             {\rm Diam}(N', h'(0)) \leq A,
         \end{align}
         and
         \begin{align*}
             R_{h'(0)} \equiv 1.
         \end{align*}

  Note that  as same as  $(\tilde N,   \tilde h(0))$,   $(N',  h'(0))$ is the limit of   hypersurfaces
         $$( f^{-1}(f(\phi_T(p_i))), R(q_i))\bar g(T); q_i).$$
        Thus  by the above convergence
         and (\ref{diameter-N}),   we see
         \begin{align}\label{type1-sequence}
             {\rm Diam}(R(q_i)\bar g(T)) = {\rm Diam}(\bar g(T))R^{\frac{1}{2}}(q_i) \leq 2A.
         \end{align}
         On the other hand,  by  \cite[Lemma  1.3 and Lemma  2.6]{ZZ-4d},   there exists a  large constant  $C_0$ such that
 \begin{align}\label{type1-curvature-control}
             \frac{1}{C_0}R(x)\leq R(q_i) \leq C_0R(x),~\forall x\in \Sigma_{f(\phi_{T}(p_i))}
         \end{align}
         for all $i\gg 1$.
       Hence,   combining  \eqref{type1-sequence} and \eqref{type1-curvature-control}, we get
         \begin{align}\label{type1-2-diam}
             {\rm Diam}(R(\phi_T(p_i))\bar g(T))&={\rm Diam}(\bar g(T))R^\frac{1}{2}(\phi_T(p_i))\notag\\
             &\leq C_0{\rm Diam}(\bar g(T)) R^\frac{1}{2}(q_i)\notag\\
             &\leq 2C_0A,
         \end{align}
         which contradicts to \eqref{qi-sequence-large}.   Therefore,  the theorem is proved.
\end{proof}

\begin{proof}[Proof of Corollary \ref{alter-coro}]
    By Theorem \ref{main-notype2},  $(n-1)$-dimensional  compact split limit ancient flow $(N, h(t))$ of type II can't arise  from the blow-down of $(M,g)$. Thus we  need only  to show  that the compact split ancient flows of type I and noncompact split ancient flows   can't  occur  simultaneously from the blow-down of $(M,g)$.  In fact,   the latter  is true  by \cite[Theorem 1.2]{ZZ-high}.   The corollary  is  proved.
\end{proof}

\subsection{Proof of Theorem \ref{compact-linear-decay}}

As  in Theorem \ref{main-notype2}, we see that  (\ref{R-decay}),
     (\ref{gradient-esti}) and (\ref{linear-f}) are  all satisfied. Thus all results in  Section  1-4  hold.

\begin{proof}[Proof of Theorem \ref{compact-linear-decay}]
    The first inequality in \eqref{R-decay-linearly} has been proved in Proposition   \ref{ell-curvature-lower}.  We  only need to prove the   second inequality.   Suppose that it is not true,  then  there exists a sequence of points $p_i'\to \infty$  such that
    \begin{align}\label{curvature-arbitrary-large}
        R(p_i')\geq \frac{i}{\rho(p_i')}.
    \end{align}

    Let $s_i = f(p_i')$. Then  by (\ref{linear-f}),  there exists a uniform constant $C_1>0$ such that
    \begin{align}\label{si-levelset}
        C_1^{-1}\rho (p_i') \leq s_i \leq C_1\rho (p_i').
    \end{align}
    By Proposition  \ref{levelset-ell} and
      Lemma \ref{upper-boundofR},  for each $i\gg 1$ there exists $q_i\in \Sigma_{s_i}$  such that
    \begin{align}\label{ell-center-curvature-bound}
        R(q_i)\leq \frac{C_2}{s_i}
    \end{align}
    for some uniform constant $C_2>0$. On the other hand, by Corollary \ref{alter-coro}, we know that  $(n-1)$-dimensional  split limit solution $(N', h'(t))$  of rescaled Ricci flows $(M, g_{p_i'}(t); p_i')$ is a compact ancient $\kappa$-solution of type I. Thus  as in  (\ref{diameter-N}), we  have
    \begin{align*}
        {\rm Diam}(N',h'(0))\leq A.
    \end{align*}
      It follows that  there exists uniform constant $C_0>0$ as in  (\ref{type1-curvature-control}) such that
    \begin{align}\label{pi'-qi}
        C_0^{-1} R(q_i)\leq R(p_i')\leq C_0 R(q_i), ~\forall ~i\gg 1.
    \end{align}
 Hence, combining \eqref{si-levelset}, \eqref{ell-center-curvature-bound} and \eqref{pi'-qi},  we obtain
    \begin{align}\label{pi'-curvature-bound}
        R(p_i')\leq \frac{C_3}{\rho(p_i')}
    \end{align}
    for some uniform constant $C_3>0$.  But  this is a contradiction with  \eqref{curvature-arbitrary-large}   when $i\gg 1$.  The theorem is proved.

\end{proof}


  \end{document}